\definecolor{link}{rgb}{0.18,0.25,0.63}
\definecolor{mygrey}{rgb}{0.34,0.34,0.34}
\definecolor{myred}{rgb}{0.7,0.25,0.2}
\definecolor{mygreen}{rgb}{0, 0.5, 0}
\newcommand{\cL}{\mathcal L}
\newcommand{\bA}{\boldsymbol A}
\newcommand{\bM}{\boldsymbol M}
\newcommand{\bC}{\boldsymbol C}
\newcommand{\bH}{\boldsymbol H}
\newcommand{\bx}{\boldsymbol x}
\newcommand{\bw}{\boldsymbol w}
\newtheorem{prop}{Proposition}[section]
\newtheorem*{prop*}{Proposition}
\newtheorem{rmk}[prop]{Remark}
\newtheorem*{rmk*}{Remark}
\newtheorem{thm}[prop]{Theorem}
\newtheorem*{thm*}{Theorem}
\newtheorem*{defn*}{Definition}
\newtheorem{lem}[prop]{Lemma}
\newtheorem*{lem*}{Lemma}
\newtheorem*{cor*}{Corollary}
\newtheorem{example}[prop]{Example}
\newtheorem*{example*}{Example}
\DeclareMathOperator{\zer}{zer}
\DeclareMathOperator{\Var}{Var}
\DeclareMathOperator{\Fix}{Fix}
\DeclareMathOperator{\Span}{span}
\DeclareMathOperator{\diag}{diag}
\DeclareMathOperator{\Image}{Im}
\DeclareMathOperator{\gra}{gra}
\DeclareMathOperator{\dist}{dist}
\DeclareMathOperator{\dom}{dom}
\DeclareMathOperator{\Div}{div}
\DeclareMathOperator{\prox}{prox}
\newcommand{\cG}{ \mathcal{G} }
\newcommand{\cQ}{ \mathcal{Q} }
\newcommand{\cO}{ \mathcal{O} }
\newcommand{\R}{\mathbb{R}}
\newcommand{\N}{\mathbb{N}}
\newcommand{\wt}[1]{ \widetilde{#1} }
\newcommand{\bra}{ \left \langle  }
\newcommand{\ket}{ \right \rangle  }
\newcommand{\customlabel}[2]{%
	\protected@write \@auxout {}{\string \newlabel {#1}{{#2}{\thepage}{#2}{#1}{}} }%
	\hypertarget{#1}{#2}
}
\newcommand{\labterm}[2]{\customlabel{#1}{\text{$#2$}}}
\definecolor{verydarkgreen}{rgb}{0.0, 0.4, 0.0}
\newcommand{\rev}[1]{#1}
\numberwithin{equation}{section}
\begin{document}

\title[Generalized Fast-KM]{\rev{Fast Krasnoselskii--Mann Method with Overrelaxation and Preconditioners}}

\author{Radu I. Bo\c{t}$^*$}
\author{Enis Chenchene$^*$}
\address{\phantom{1}$^*$Faculty of Mathematics, University of Vienna, Oskar-Morgenstern-Platz 1, 1090 Vienna, Austria}
\email{radu.bot@univie.ac.at, enis.chenchene@univie.ac.at}
\thanks{{The research of RIB and EC has been supported by the Austrian Science Fund (FWF), projects W 1260 and P 34922-N, respectively.}}

\author{Jalal M. Fadili$^\dagger$}
\address{\phantom{1}$^\dagger$Normandie Univ, ENSICAEN, CNRS, GREYC, France.}
\email{Jalal.Fadili@ensicaen.fr}

\begin{abstract}
We study accelerated Krasnoselskii--Mann-type methods with preconditioners in both continuous and discrete time. From a continuous-time model, we derive a generalized fast Krasnoselskii--Mann method, providing a new yet simple proof of convergence that \rev{leads to} unprecedented flexibility in parameter tuning, \rev{allowing up to twice larger relaxation parameters}. Our analysis unifies inertial and anchoring acceleration mechanisms and offers a broad range of parameter choices, which prove beneficial in practice. Additionally, we extend our analysis to the case where preconditioners are allowed to be degenerate, unifying the treatment of various splitting methods and establishing the weak convergence of shadow sequences.
\end{abstract}

\maketitle

Keywords: Inertial Accelerations, Nesterov Acceleration, Halpern Method, Fast-Krasnoselskii--Mann {Method}, Degenerate Proximal Point Algorithm

\section{Introduction}

Let $T\colon H\to H$ be a nonexpansive operator on the {real} Hilbert space $H$. Our objective is to solve the fixed-point problem:
\begin{equation}\label{eq:intro_fixed_point_problem}
	\text{Find} \ x \in H \ \text{such that} \ x = T(x)\,,
\end{equation}
with fast methods in continuous and discrete time, and apply these results to splitting methods providing convergence of the so-called shadow sequences (to be made precise later).

If $T =I - \nabla f$, where $f \colon H\to \R$ is a convex and smooth function whose set of minimizers is nonempty, \eqref{eq:intro_fixed_point_problem} is equivalent to minimizing $f$. The most popular accelerated method to minimize $f$ in discrete time is the celebrated Nesterov's {algorithm} \cite{Nesterov1983AMF}. Su, Boyd, and Cand\`es in \cite{sbc16} later discovered that Nesterov's {algorithm} can be understood as a specific {finite differences} discretization of the following ODE
\begin{equation}\label{eq:sbc}
	\ddot{x}(t) + \frac{\alpha}{t}\dot{x}(t) + \nabla f(x(t)) = 0\,, \quad \text{for} \ t \geq t_0 {> 0}\,
\end{equation}
{with $\alpha = 3$} and initial data {$(x(t_0),\dot{x}(t_0))=(x_0,v_0) \in H^2$}. Although the convergence of $x(t)$ to a minimizer of $f$ remains an open problem {for $\alpha=3$} (except in one dimension where convergence holds \cite{acr19}), the trajectories of \eqref{eq:sbc} satisfy $f(x(t)) - {\min} f = \cO(t^{-2})$ {as $t \rightarrow +\infty$}, which is an improvement over the $o(t^{-1})$ rate of {the} standard gradient flow. Later, in \cite{acpr18, cd15}, it has been shown that replacing the critical parameter $\alpha = 3$ with any $\alpha > 3$ ensures the {weak} convergence of $x(t)$ {to a minimizer of $f$}, and improves the rate {of convergence for the function values} from $\cO(t^{-2})$ to $o(t^{-2})$. 

However, these inertial acceleration methods often suffer from significant oscillations. This issue was addressed by Attouch, Peypouquet, and Redont \cite{apr16}, and later by Attouch, Chbani, Fadili, and Riahi \cite{acfr20, acfr23}, who proposed to couple the {asymptotically vanishing viscous damping} term in \eqref{eq:sbc} {to an additional} \textit{{geometric} Hessian-driven damping term}, previously introduced in \cite{aabr02, amr12}, resulting in the system
\begin{equation}\label{eq:acfr}
\ddot{x}(t) + \frac{\alpha}{t}\dot{x}(t) + \beta(t) \frac{d}{dt} \nabla f(x(t)) + \nabla f(x(t))= 0\,, \quad \text{for} \ t \geq t_0\,,
\end{equation}
which, {for $\alpha > 3$ and appropriate coefficient $\beta(t) > 0$}, has the important benefit of retaining the $o(t^{-2})$ rate on the {objective}, while ensuring the weak convergence of trajectories, and attenuating oscillations. 

\rev{
Inspired by \eqref{eq:acfr}, we here study the much more general case where $\nabla f$ is replaced by any monotone and Lipschitz continuous operator $Q\colon H\to H$:
\begin{equation}\label{eq:intro_general_second_order}
	\ddot{x}(t) + \frac{\alpha}{t}\dot{x}(t) + \beta(t) \frac{d}{dt} Q(x(t)) + b(t) Q(x(t))= 0\,, \quad \text{for} \ t \geq t_0\,.
\end{equation}
For instance, to solve \eqref{eq:intro_fixed_point_problem}, {it is natural} to consider $Q:=I-T$, which is indeed monotone and Lipschitz, and the equilibria of \eqref{eq:intro_general_second_order} are solutions to \eqref{eq:intro_fixed_point_problem}. Note that while oscillations attenuation with Hessian-driven damping are quite well-understood and intuitive in optimization contexts (where $Q = \nabla f$), the interpretation of the geometric damping for general monotone operators is not always clear and remains an open problem. In fact, the geometric damping might not always reduce oscillations for general monotone operators as we will see in some case in our numerical results. Nonetheless, we will still stick with this terminology in this paper for consistency reasons.
}

Although the system in \eqref{eq:acfr} generates fast converging trajectories for a range of admissible parameters (cf.~\cite[Theorem 2.1]{acfr23}), when {one turns} to {the general} operator setting {\eqref{eq:intro_general_second_order}} the situation becomes significantly more delicate. A notable contribution in this direction was made by Bo\c t, Csetnek and Nguyen in \cite{bot2023}, who studied \eqref{eq:intro_general_second_order} with $b(t):= \frac{1}{2}(\dot \beta(t) + \frac{\alpha}{t}\beta(t))$, the so-called fast Optimistic Gradient Descent Ascent (Fast-OGDA) system. {In particular, they showed that when} $\beta$ is constant and $\alpha >2$, this dynamical system generates trajectories weakly converging to a zero of $Q$ (provided that {the set of zeros of $Q$ is {non-empty}}) with {a convergence rate of} $\|Q(x(t))\| = o(t^{-1})$ {as $t \rightarrow +\infty$}.   Therefore,  \eqref{eq:intro_general_second_order} can be seen as an accelerated {version} of {the first order system}
\begin{equation}\label{eq:monotone_flow}
	\dot{x}(t) + Q(x(t)) = 0\,, \quad \text{for} \ t \geq t_0\,.
\end{equation}
{Indeed, when} $Q$ is a cocoercive operator, {it is known that each solution trajectory generated by \eqref{eq:monotone_flow} weakly converges to a zero of $Q$} with a convergence rate of $\|Q(x(t))\| = o(t^{-\frac{1}{2}})$ as $t \rightarrow +\infty$ \cite{attouch2023fast}. \rev{To further emphasize this crucial difference, we refer to rates of the form $\cO(t^{-1})$, or $o(t^{-1})$ as \emph{fast rates}.}

It turns out that the specific choice $b(t):= \frac{1}{2}(\dot \beta(t) + \frac{\alpha}{t}\beta(t))$ in \eqref{eq:intro_general_second_order} considered by \cite{bot2023} is ultimately due to the impossibility of {exploiting} function values in {the} Lyapunov analysis. This immediately leads us to the natural question: 
\begin{center}{\textit{Can Fast-OGDA in \cite{bot2023} be extended to accommodate more general parameters $b$, enabling new fast and flexible numerical
algorithms for solving \eqref{eq:intro_fixed_point_problem}?}}\end{center}

In this paper, we show that is possible to consider the more general choice\footnote{Observe that $\theta\in (1, \alpha-1)$ is a convex combination of $1$ and $\alpha-1$ with parameter $\eta \in (0,1)$. The redundancy in notation will be justified later in the analysis, which turns out more convenient to write with respect to $\eta$.}
\begin{equation}\label{eq:intro_b_bcf}
	b(t):= \rev{(1-\eta)} \dot \beta(t) + \rev{\theta} \frac{\beta(t)}{t}\,, \quad \text{for} \quad \eta \in (0, 1) \quad \mbox{and} \quad  \rev{\theta :=(1-\eta) + \eta (\alpha - 1)}\,,
\end{equation}
which recovers \cite{bot2023} by setting $\eta = \tfrac{1}{2}$, \rev{i.e., $\theta = \frac{\alpha}{2}$}. Our convergence guarantees for a general parameter $\eta \in (0, 1)$ {cover} those in \cite{bot2023}: in particular, we establish that, e.g., if $\alpha >2$ and $\beta$ is constant, we have weak convergence of the trajectories to zeros of $Q$ with a convergence rate of {$\|Q(x(t))\| = o(t^{-1})$ as $t \rightarrow +\infty$}. \rev{Additionally, we show that in the two edge cases $\eta=0$ or $\eta=1$ the rate $\|Q(x(t))\|=o(t^{-1})$ cannot be achieved in general, unveiling the tightness of our analysis and an intriguing fundamental limitation in the choice of the parameter function $b$.}

\rev{The Fast-OGDA system is in turn related to a seemingly completely different mechanism: the Tikhonov regularization, also known as anchoring mechanism. Specifically, for $\beta(t)\equiv 1$ and $\eta = 0$, the dynamical system \eqref{eq:intro_general_second_order}, can be easily seen to be equivalent to:
\begin{equation}\label{eq:intro_tychonoff}
\dot x(t) + Q(x(t)) + \frac{\alpha-1}{t}(x(t)-{v}) = 0\,, \quad \text{for} \ t \geq t_0\,,
\end{equation} 
with anchor point $v := x_0 + \frac{t_0}{\alpha-1}(\dot{x}(t_0) + Q(x_0))$, cf.~Section \ref{sec:connection_to_tichonov}. The dynamical system \eqref{eq:intro_tychonoff} is nothing but the Tikhonov regularization of the monotone operator flow \eqref{eq:monotone_flow}, see \cite{browder76, reich77, cps08}, that in fact not only generates trajectories that converge strongly to the projection of $v$ onto $\zer Q$, but also exhibits the rate $\|Q(x(t))\|=\cO(t^{-1})$ as $t \rightarrow +\infty$ for all $\alpha \geq 2$.
	
The system \eqref{eq:intro_tychonoff} can also be recognized as the continuous-time model of the celebrated Halpern method which is a classical method in fixed-point theory \cite{bk24, spr23}. A simplified version of \eqref{eq:intro_tychonoff} was studied in \cite{spr23} in finite-dimensional spaces. In the discrete-time setting, Sabach and Shtern in \cite{ss17} and, independently, Lieder in \cite{lieder21}, proved that Halpern's method achieves the {convergence rate of} $\cO(k^{-1})$ {as $k \rightarrow +\infty$} on the norm of the fixed-point residual. Interestingly, this cannot be improved to $o(k^{-1})$ in normed spaces; see \cite{cc23}. The connection between Halpern's method and momentum-type algorithms have been observed previously in \cite{cc23, ykr24, TranDinh24}.
}

\rev{The \( o(k^{-1}) \) convergence rate of the fixed-point residual norm is achieved by Algorithm~\ref{alg:fast_km}, the main focus of our paper. This algorithm solves the fixed-point problem \eqref{eq:intro_fixed_point_problem} and can be viewed as a discrete-time analogue of the continuous dynamics \eqref{eq:intro_general_second_order}, with \( b \) given by \eqref{eq:intro_b_bcf}, \( \beta \equiv 1 \), and \( Q = I - T \). It yields an accelerated variant of the classical Krasnoselskii--Mann method \cite{krasnoselskii1955, mann1953}, which for $x^0 \in H$ and \emph{relaxation parameter} $\theta \in (0, 1)$, iterates 
\begin{equation}\label{eq:krasnoselskii_mann}
	x^{k+1} = x^k + \theta \big(T(x^k) - x^k\big)\,, \quad \text{for} \  k \geq 0 .
\end{equation}
The scheme \eqref{eq:krasnoselskii_mann} is known to only exhibit the rate $o(k^{-\frac{1}{2}})$ as $k \to+\infty$ on the fixed-point residual; see \cite{liang2015convergence,Davis2016}. We can see that Algorithm~\ref{alg:fast_km} extends \eqref{eq:krasnoselskii_mann} by incorporating a \emph{momentum} term, governed by the standard coefficient $\alpha_k := 1 - \frac{\alpha}{k + \sigma}$, and features \emph{vanishing} relaxation parameters $\theta_k := \frac{\theta}{k + \sigma}$. Leveraging the enhanced flexibility of $b$ in \eqref{eq:intro_b_bcf}, it extends the Fast-Krasnoselskii--Mann method (Fast-KM) introduced by Bo\c{t} and Nguyen in \cite{botkhoa2023}, allowing for more general parameters \( \theta \in (1, \alpha - 1) \) and \( \sigma > 0 \). It results, in particular, in (almost) \emph{twice larger} relaxation parameters---from $\theta =\frac{\alpha}{2}$ to $\theta$ arbitrarily close to $\alpha-1$, as well as a general parameter $\sigma>0$. As we will show in Section~\ref{sec:num}, this enhanced flexibility leads to notable practical improvements, consistently outperforming the baseline method in several benchmark problems.} \rev{We strongly rely on the important interplay between continuous and discrete-time dynamics} to derive a convergence proof for Algorithm~\ref{alg:fast_km}  that matches almost exactly the continuous-time approach, \rev{making it easily accessible using the latter as a \emph{guideline}.} Additionally, this allows us to cover, in a unifying fashion, the important special case of $\alpha=\sigma=2$, for which the method reduces to the Optimal Halpern Method \cite{lieder21}, \rev{as well as the edge case $\theta=1$, for which Algorithm \ref{alg:fast_km} reduces to a Halpern-type method, much like \eqref{eq:intro_tychonoff} in continuous time}. 

Since Algorithm~\ref{alg:fast_km} allows us to find fixed points of nonexpansive operators, it emerges as a unified algorithmic framework to design \rev{accelerated variants of} \textit{operator splitting methods}, such as primal-dual \cite{cp11}, forward--backward \cite{bt09}, Douglas--Rachford \cite{dr56, drs_mercier_lions}, Davis--Yin \cite{dy17, rfp13}, and several other extensions. Following the recent insights in \cite{HeYuan12, bredies2021degenerate, bredies2022graph}, these methods can {be} studied as fixed-point iterations with respect to nonexpansive mappings in a renormed Hilbert space, see Section \ref{sec:splitting}. In this context, the metric might actually be \textit{degenerate}, i.e., only inducing a Hilbert seminorm, and thus the analysis requires a careful adaptation. We do so by delineating a \textit{degenerate} Opial Lemma, a fundamental ingredient that allows us to adapt the convergence analysis of Algorithm~\ref{alg:fast_km} almost effortlessly to this degenerate setting. We apply this result to accelerate the convergence of a recent extension of the Douglas--Rachford splitting method \cite{drs_mercier_lions, bredies2022graph}, ensuring the weak convergence of the so-called \textit{shadow} sequences in an infinite-dimensional {Hilbert space} setting.

\rev{
\begin{algorithm}[t]
\caption{(Generalized) Fast Krasnoselskii--Mann Method}
\label{alg:fast_km}
{\bf Pick} $\sigma >0$, \rev{$\alpha > 2$, $\theta \in (1, \alpha - 1)$}\\
{\bf Initialize} \rev{$x^{-1}, x^0 \in H$}\\
\For{$k =0, 1, 2,\dots$}{
\begin{equation*}
	\rev{x^{k+1} = x^k + \frac{\rev{\theta}}{k+\sigma} \big(T(x^k) - x^k \big) +  \left(1-\frac{\alpha}{k+\sigma}\right)\big(T(x^k)-T(x^{k-1})\big)}
\end{equation*}}
\end{algorithm}
}

\newpage
\subsection{Contributions} Our main contributions are summarized as follows:
\begin{itemize}
	 \item We introduce a generalization of the Fast-OGDA system introduced by Bo\c t, Nguyen and Csetnek in \cite{bot2023} with $b$ as in \eqref{eq:intro_b_bcf}, \rev{and show that if $\beta$ is constant, our analysis is tight in the sense that the edge cases in the parameter bounds \eqref{eq:intro_b_bcf} do not lead to fast little-$o$ convergence rates.}
	 \item We introduce Algorithm~\ref{alg:fast_km} as a discretization of the proposed continuous-time model and propose a new convergence proof that matches almost exactly the continuous-time approach. The method generalizes \cite{botkhoa2023} integrating two degrees of freedom: it considers a general parameter $\sigma>0$ and a general parameter \rev{$\theta\in (1,\alpha-1)$}, \rev{while maintaining the fast little-$o$ rate on the fixed-point residual.} Additionally, we cover the convergence analysis also in the edge case $\alpha =\sigma = 2$, which corresponds to the Optimal Halpern Method \cite{ykr24}, thereby providing a unified analysis of \textit{inertial} and \textit{anchoring} (Tikhonov) acceleration mechanisms.
    \item We establish an Opial-type lemma with degenerate preconditioners, and adapt our convergence proof to this setting. We further discuss how this allows us to systematically obtain convergence results for several accelerated splitting methods, establishing the weak convergence of the so-called \textit{shadow sequences} in infinite dimensional Hilbert spaces, thus closing a gap in \cite{botkhoa2023}.
    \item We present numerical experiments that demonstrate that the Fast-KM method with \rev{large relaxation parameters} seemingly outperforms the baseline choice in \cite{botkhoa2023}.
\end{itemize}

\subsection{Notation and Preliminaries}\label{sec:preliminaries_operators}

In this section, we fix the notation and introduce some preliminary results on nonexpansive operators in Hilbert spaces and their connection to monotone operators; see \cite{BCombettes} for a comprehensive account. An operator $T\colon H\to H$ is said to be \textit{nonexpansive} if
\begin{equation}
	\|T(x)- T(x')\|\leq \|x-x'\|\,,\quad \text{for all} \ x, x' \in H\,.
\end{equation}
The set of \textit{fixed points} of $T$ is the defined by $\Fix T := \{x \in H \mid x = T(x)\}$. In this paper, we always assume that $\Fix T \neq \emptyset$. If $T$ is nonexpansive, the operator $T_s := sT + (1-s)I$ for $s \in (0, 1)$ is said to be $s$-\textit{averaged}. $T_{1/2}$ is called firmly nonexpansive. It follows from \cite[Remark~4.34(iv)]{BCombettes} that $I-T_s$ is $\frac{1}{2s}$-cocoercive, i.e., it satisfies the following inequality:
\begin{equation}
	\bra x - x', (I-T_s)(x) - (I-T_s)(x')\ket \geq \frac{1}{2s}\|(I-T_s)(x)- (I-T_s)(x')\|^2\,, \quad \text{for} \ x, x' \in H\,.
\end{equation}
In particular, $I-T_s$ is a \textit{monotone} operator. A set-valued mapping $A\colon H\to 2^H$ is \textit{monotone} if it obeys
\begin{equation}
	\bra  a - a', u - u' \ket  \geq 0\,, \quad \text{for all} \ (u, a), (u', a')\in \gra A\,,
\end{equation}
where the set $\gra A\subseteq H \times H$ is the \textit{graph} of $A$ and is defined by $\gra A := \{(u, a) \mid a \in A(u)\}$. A monotone operator is called \textit{maximal monotone} if its graph is maximal (with respect to the inclusions of sets) among the class of monotone operators. The inverse of a set-valued mapping $A$ is denoted by $A^{-1}$ and defined \rev{through its graph} by $\gra A^{-1}:= \{(a, u) \mid (u, a) \in \gra A\}$. The \textit{resolvent} of $A$ is the mapping $J_{A}:=(I+A)^{-1}$, where $I$ denotes the identity operator on $H$. From \cite[Proposition~23.8]{BCombettes}, $A$ is {maximal monotone} if and only if $J_A$ is a single-valued mapping from $H$ to $H$, which is $1$-{cocoercive}, or simply \textit{cocoercive} (or firmly nonexpansive). Moreover, it is clear that the operator $T := 2J_{A}-I$ is nonexpansive. Conversely, from \cite[Corollary~23.9]{BCombettes}, if $T$ is nonexpansive, there exists a maximal monotone operator $A$ such that $T = 2J_{A}-I$. Furthermore, fixed points of $T$ correspond to \textit{zeros} of $A$, i.e., elements of $\Fix T = \zer A := \{u \in H \mid 0 \in A(u)\}$. We also denote $\rightharpoonup$ as weak convergence and $\to$ as strong convergence. \rev{Eventually, we denote $\llbracket1, n\rrbracket:=\{1,\dots, n\}$ for all $n \in \N$.}

\section{Continuous-time System and Analysis}

In this section, we introduce a general dynamical system that extends Fast-OGDA from \cite{bot2023}. Let $Q\colon H\to H$ be a monotone and Lipschitz continuous operator\footnote{Note that the Lipschitz continuity of $Q$ is only useful to establish the existence and uniqueness of a strong solution to \eqref{eq:second_order_dynamic_general} which in turn entails the almost everywhere differentiability of $t \mapsto Q(x(t))$.} and consider the dynamical system:
\begin{equation}\label{eq:second_order_dynamic_general}
	\ddot{x}(t) + \frac{\alpha}{t}\dot{x}(t) + \beta(t) \frac{d}{dt} Q(x(t)) + b(t)Q(x(t))=0\,, \quad t \geq t_0  > 0\,,
\end{equation}
where $b$ is defined for all $t \geq t_0$ by
\begin{equation}\label{eq:def_b}
	b(t) :=\rev{(1-\eta)} \dot{\beta}(t) + \rev{\theta}\frac{\beta(t)}{t}\,, \quad \rev{\text{where} \quad  \theta:=(1-\eta)+\eta (\alpha-1)}\,,
\end{equation}
with the additional parameter $\eta\in (0,1)$. The system \eqref{eq:second_order_dynamic_general} recovers the choice in \cite{bot2023} by setting $\eta= \tfrac{1}{2}$, for which $\rev{\theta} = \tfrac{\alpha}{2}$, and can be leveraged to approach solutions to \eqref{eq:intro_fixed_point_problem} by setting $Q=I-T$, since indeed:
\begin{equation*}
	\zer Q = \zer (I-T) = \Fix T \,.
\end{equation*}

We will need that \eqref{eq:second_order_dynamic_general} admits at least one strong global solution for every starting point $x(0)=x_0 \in H$, i.e., such that $x$ and $\dot x$ are locally absolutely continuous on $[t_0, +\infty)$, and $x$ satisfies \eqref{eq:second_order_dynamic_general} \textit{almost everywhere} (a.e.)~on $[t_0, +\infty)$. In turn, this will entail that for every locally absolutely continuous trajectory $t \mapsto x(t)$ the trajectory $t\mapsto Q(x(t))$ is also locally absolutely continuous. In fact, using \cite[Proposition~6.2.1]{Haraux91}, Lipschitz continuity of $Q$ and mild conditions on $\beta(t)$, it can be shown that the system \eqref{eq:second_order_dynamic_general} has a unique strong solution trajectory. In fact, Lipschitz continuity of $Q$ can even be weakened to Lipschitz \rev{continuity} over the bounded subsets of $H$ to show the existence and uniqueness of a maximal solution. The passage to a global solution would use a classical contradiction argument and boundedness of the trajectory that we will show in our Lyapunov analysis.

\subsection{Lyapunov analysis}
To study the second order dynamical system \eqref{eq:second_order_dynamic_general}, we  introduce, for $x^* \in \zer Q \neq \emptyset$, the energy function $E^\lambda \colon [t_0, +\infty)\to \R_+$ that will play a fundamental role in our analysis,
\begin{equation}\label{eq:lyapunov_second_order_continuous}
	\begin{aligned}
	&E^\lambda(t) := \delta(t)  \bra  Q(x(t)), x(t)-x^* \ket  + \frac{\xi(t)}{2} \|Q(x(t))\|^2 + \frac{c}{2}\|x(t)-x^*\|^2 + \frac{1}{2}\|v^\lambda(t)\|^2\,, \\
	&v^\lambda(t) := \lambda (x(t)-x^*) + t \big( \dot{x}(t) + \rev{(1-\eta)} \beta(t) Q(x(t))\big)\,,
	\end{aligned}
\end{equation}
for some constant $\lambda \geq 0$, and parameters given by
\begin{equation}\label{eq:def_parameters}
	\delta(t) := \rev{\eta}\lambda \beta(t) t\,,\quad \xi(t) := \eta(1-\eta)\beta(t)^2 t^2\,,\quad c := \lambda(\alpha - 1-\lambda)\,, \quad \text{for} \ t \geq t_0\,.
\end{equation}
This choice of the parameters allows us to establish the following result, whose proof is postponed to Appendix \ref{sec:appendix_lemmas}.
\begin{lem}\label{lem:derivative_of_Lyapunov_second_order}
Let $x^* \in \zer Q \neq \emptyset$, $x\colon [t_0, +\infty)\to H$ be a solution trajectory to \eqref{eq:second_order_dynamic_general}, and $E^\lambda$ the energy function defined in \eqref{eq:lyapunov_second_order_continuous}. Then, for almost all $t \geq t_0$, we have
\begin{equation}\label{eq:lyapunov_analysis_descent}
	\begin{aligned}
		\frac{d}{dt}E^\lambda(t) &= \lambda t \rev{\eta} \left( \dot{\beta}(t) + \frac{2-\alpha}{t}\beta(t)\right)   \bra  Q(x(t)), x(t)-x^* \ket  -\frac{t}{2}(\alpha-1-\lambda) \|\dot{x}(t) + \beta(t) Q(x(t))\|^2 \\
		&  \quad + \beta(t) t^2 \left( (1-\eta)\eta \left(\dot{\beta}(t) +\frac{2-\alpha}{t}\beta(t) \right) + \frac{1}{2}(\alpha-1-\lambda)\frac{\beta(t)}{t}\right) \|Q(x(t))\|^2\\
		& \quad  -\rev{\eta}t^2\beta(t)   \bra   \dot{x}(t), \frac{d}{dt} Q(x(t))  \ket   -  \frac{t}{2}(\alpha-1-\lambda)\|\dot{x}(t)\|^2\,.
	\end{aligned}
\end{equation}
\end{lem}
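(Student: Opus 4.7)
The plan is to differentiate $E^\lambda$ term by term, substitute the ODE \eqref{eq:second_order_dynamic_general} to eliminate $\ddot x(t)$, and exploit the specific calibrations $\delta,\xi,c$ in \eqref{eq:def_parameters} together with the structure of $b(t)$ in \eqref{eq:def_b} to force the right cancellations. Since $x$, $\dot x$, and $t\mapsto Q(x(t))$ are all locally absolutely continuous, the energy $E^\lambda$ is differentiable a.e.\ and the product rule applies; I work at every such $t\geq t_0$.

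First I compute the easy pieces:
\begin{itemize}
\item $\frac{d}{dt}\big[\delta(t)\langle Q(x),x-x^*\rangle\big]=\dot\delta(t)\langle Q(x),x-x^*\rangle+\delta(t)\langle \tfrac{d}{dt}Q(x),x-x^*\rangle+\delta(t)\langle Q(x),\dot x\rangle$,
\item $\frac{d}{dt}\big[\tfrac{\xi(t)}{2}\|Q(x)\|^2\big]=\tfrac{\dot\xi(t)}{2}\|Q(x)\|^2+\xi(t)\langle Q(x),\tfrac{d}{dt}Q(x)\rangle$,
\item $\frac{d}{dt}\big[\tfrac{c}{2}\|x-x^*\|^2\big]=c\langle x-x^*,\dot x\rangle$.
\end{itemize}
Then the key step is $\frac{d}{dt}\tfrac12\|v^\lambda\|^2=\langle v^\lambda,\dot v^\lambda\rangle$. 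Differentiating the definition of $v^\lambda$ yields
\[
\dot v^\lambda(t)=(\lambda+1)\dot x(t)+t\ddot x(t)+\big[(1-\eta)\beta(t)+t(1-\eta)\dot\beta(t)-tb(t)\big]Q(x(t))+t(1-\eta)\beta(t)\tfrac{d}{dt}Q(x(t)).
\]
Using \eqref{eq:second_order_dynamic_general} to replace $t\ddot x$ by $-\alpha\dot x-t\beta\tfrac{d}{dt}Q-tb\,Q$, and plugging in $tb(t)=t(1-\eta)\dot\beta(t)+\theta\beta(t)$ with $\theta=(1-\eta)+\eta(\alpha-1)$, the $\dot\beta$ contributions cancel and the $\beta Q$ coefficient collapses to $-\eta(\alpha-1)\beta(t)$, giving the compact identity
\[
\dot v^\lambda(t)=-(\alpha-1-\lambda)\dot x(t)-t\eta\beta(t)\tfrac{d}{dt}Q(x(t))-\eta(\alpha-1)\beta(t)Q(x(t)).
\]

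It then remains to expand $\langle v^\lambda,\dot v^\lambda\rangle$ and combine with the three derivative pieces above. The calibration is designed so that the cross terms cancel in the expected way: the $\langle x-x^*,\dot x\rangle$ contribution from $\langle v^\lambda,\dot v^\lambda\rangle$ equals $-\lambda(\alpha-1-\lambda)\langle x-x^*,\dot x\rangle=-c\langle x-x^*,\dot x\rangle$ and cancels $c\langle\dot x,x-x^*\rangle$ from the $\|x-x^*\|^2$ term; the $\langle x-x^*,\tfrac{d}{dt}Q\rangle$ contribution $-\lambda t\eta\beta\langle x-x^*,\tfrac{d}{dt}Q\rangle$ cancels $\delta(t)\langle \tfrac{d}{dt}Q,x-x^*\rangle$; and the two $\langle Q,\tfrac{d}{dt}Q\rangle$ contributions, namely $\xi(t)$ from the $\|Q\|^2$ derivative and $-t^2(1-\eta)\eta\beta^2$ from $\langle v^\lambda,\dot v^\lambda\rangle$, annihilate since $\xi(t)=\eta(1-\eta)\beta(t)^2 t^2$. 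Grouping the surviving $\langle Q,x-x^*\rangle$ terms gives the coefficient $\dot\delta(t)-\lambda\eta(\alpha-1)\beta(t)=\lambda t\eta\big(\dot\beta(t)+\tfrac{2-\alpha}{t}\beta(t)\big)$, and similarly for $\|Q\|^2$ one obtains $\eta(1-\eta)\beta(t)t^2\big(\dot\beta(t)+\tfrac{2-\alpha}{t}\beta(t)\big)$ plus a residual $\tfrac{t}{2}(\alpha-1-\lambda)\beta(t)^2$ that, together with the residual $\langle Q,\dot x\rangle$ and $\|\dot x\|^2$ contributions, reassembles precisely into $-\tfrac{t}{2}(\alpha-1-\lambda)\|\dot x+\beta Q\|^2-\tfrac{t}{2}(\alpha-1-\lambda)\|\dot x\|^2$, matching \eqref{eq:lyapunov_analysis_descent}.

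The main obstacle is purely the bookkeeping: the identity depends on three delicate algebraic facts working simultaneously, namely (i) the coefficient $(1-\eta)$ in front of $\dot\beta$ in $b(t)$ is what kills the spurious $\dot\beta Q$ term in $\dot v^\lambda$; (ii) the coefficient $\theta=(1-\eta)+\eta(\alpha-1)$ is exactly what produces the $-\eta(\alpha-1)\beta Q$ residual, which then combines with $\dot\delta$ to yield the desired $(\dot\beta+\tfrac{2-\alpha}{t}\beta)$ factor; and (iii) the square $\|\dot x+\beta Q\|^2$ only closes because the residual $\langle Q,\dot x\rangle$ coefficient is $-t(\alpha-1-\lambda)\beta$, which one verifies by summing $\delta(t)-t\eta(\alpha-1)\beta-t(1-\eta)(\alpha-1-\lambda)\beta$. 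Once these three pieces line up, the formula \eqref{eq:lyapunov_analysis_descent} follows.
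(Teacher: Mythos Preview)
Your proof is correct and follows essentially the same route as the paper's: differentiate $E^\lambda$ term by term, compute $\dot v^\lambda$ via the ODE, and exploit the calibrations $\delta,\xi,c$ to force the cancellations and reassemble the quadratic form. The paper packages the $Q$-coefficient of $\dot v^\lambda$ into an auxiliary $w_t:=b_t-(1-\eta)\dot\beta_t-(1-\eta)\beta_t/t$ before simplifying, whereas you substitute $b(t)$ immediately to reach the compact form $\dot v^\lambda=-(\alpha-1-\lambda)\dot x-\eta(\alpha-1)\beta Q-t\eta\beta\,\partial_tQ$; the two are algebraically identical. One minor slip: in your first displayed formula for $\dot v^\lambda$ (the one still containing $t\ddot x$), the term $-tb(t)$ should not yet appear, since the ODE has not been applied; the correct bracket at that stage is $[(1-\eta)\beta(t)+t(1-\eta)\dot\beta(t)]$, and the $-tb(t)$ enters only after you substitute $t\ddot x=-\alpha\dot x-t\beta\,\partial_tQ-tb\,Q$. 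Your subsequent compact formula and all the cancellation analysis are nonetheless correct.
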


Lemma~\ref{lem:derivative_of_Lyapunov_second_order} naturally leads us to consider the following \textit{growth condition} for  $t \mapsto \beta(t)$:
\begin{equation}\label{eq:grow_condition}
	0 \leq \frac{\dot{\beta}(t)t}{\beta(t)} \leq \alpha - 2 - \varepsilon\,, \quad \text{for} \ \varepsilon \in [0,\alpha-2]\,, \ \text{and all} \ t \geq t_0\,,
\end{equation}
which with a suitable rearrangement, writes equivalently as
\begin{equation}\label{eq:growth_condition_2}
	\dot \beta(t) + \frac{2-\alpha}{t} \beta(t) \leq {-\varepsilon \frac{\beta(t)}{t}}\,, \quad \text{for all} \ t \geq t_0\,.
\end{equation}
This makes the coefficient in front of the term $\|Q(x(t))\|^2$ negative and bounded away from zero at least for $\lambda$ sufficiently close to $\alpha-1$, hence leading to the desired energy decrease. This observation is crucial to establish the following convergence result.
\begin{thm}\label{thm:second_order_direct}
Let $Q\colon H\to H$ be a monotone and Lipschitz continuous operator with $\zer Q \neq \emptyset$. Let {$x^* \in \zer Q$}, $x\colon [t_0, +\infty)\to H$ be a solution trajectory to \eqref{eq:second_order_dynamic_general} for some $\eta \in (0,1)$ and $\alpha \geq 2$. Suppose that \eqref{eq:grow_condition} holds with $\varepsilon \geq 0$. {Then, it holds}:
	\begin{equation}\label{eq:big_O_rates}
		 \bra  Q(x(t)), x(t)-x^* \ket  =\cO \left(\frac{1}{t\beta(t)}\right) \quad\text{and} \quad  \|Q(x(t))\|\ = \cO \left(\frac{1}{t\beta(t)}\right) \quad \mbox{as} \ t \rightarrow +\infty\,.
	\end{equation}
	Additionally, if $\alpha >2$ and $\varepsilon>0$, then the following holds:
	\begin{enumerate}[label=(\roman*)]
		\item \label{item:integral_bounds} We have the following integral bounds: 
		$$	\int_{t_0}^\infty\beta(t) \bra  Q(x(t)), x(t)-x^* \ket dt < +\infty \,,\quad  \int_{t_0}^\infty t \beta(t)^2 \|Q(x(t))\|^2dt < +\infty\,, \quad  \int_{t_0}^{\infty} t \|\dot{x}(t)\|^2 dt <+\infty\,.$$
		\item \label{item:iters} $x(t)$ converges weakly to a zero of $Q$ as $t\to + \infty$.
		\item \label{item:little_o_rates} The following \textit{fast} rates of convergence hold:
		\begin{equation*}
		\bra  Q(x(t)), x(t)-x^* \ket  =o\left(\frac{1}{t\beta(t)}\right)\,, \quad 
  \|Q(x(t))\| = o\left(\frac{1}{t\beta(t)}\right) \quad \mbox{and} \quad
			\|\dot{x}(t)\| = o\left(\frac{1}{t}\right) \quad \mbox{as} \ t \rightarrow +\infty\,.
		\end{equation*}
	\end{enumerate}
\end{thm}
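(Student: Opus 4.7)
My plan is to exploit the explicit derivative formula in Lemma~\ref{lem:derivative_of_Lyapunov_second_order} for two different choices of $\lambda$, together with two structural facts used throughout: (a) the growth condition \eqref{eq:growth_condition_2} makes $\dot\beta(t)+\frac{2-\alpha}{t}\beta(t)\leq -\varepsilon\beta(t)/t$, and (b) the monotonicity of $Q$ with $Q(x^*)=0$ gives both $\langle Q(x(t)),x(t)-x^*\rangle\geq 0$ pointwise and $\langle \dot x(t),\frac{d}{dt}Q(x(t))\rangle\geq 0$ a.e.\ (the latter by differentiating the monotonicity inequality along the smooth trajectory).

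\textbf{Step 1: the big-O bounds \eqref{eq:big_O_rates}.} I would pick $\lambda=\alpha-1$, which simultaneously kills $c$ and the coefficients of $\|\dot x+\beta Q\|^2$ and $\|\dot x\|^2$ in \eqref{eq:lyapunov_analysis_descent}. Under (a) with $\varepsilon\geq 0$ and (b), each remaining term on the right-hand side of \eqref{eq:lyapunov_analysis_descent} is non-positive, so $E^{\alpha-1}$ is non-increasing. The uniform bound $E^{\alpha-1}(t)\leq E^{\alpha-1}(t_0)$ together with the explicit forms $\delta(t)=\eta(\alpha-1)\beta(t)t$ and $\xi(t)=\eta(1-\eta)\beta(t)^2t^2$ directly yields \eqref{eq:big_O_rates}.

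\textbf{Step 2: integral bounds \ref{item:integral_bounds}.} For $\alpha>2$ and $\varepsilon>0$, I would shift to $\lambda=\alpha-1-\zeta$ with $\zeta\in(0,\,2(1-\eta)\eta\varepsilon)$. All four non-trivial coefficients in \eqref{eq:lyapunov_analysis_descent} then become strictly negative: $-\lambda\eta\varepsilon\beta(t)$ on $\langle Q,x-x^*\rangle$, $-\frac{t\zeta}{2}$ on both $\|\dot x\|^2$ and $\|\dot x+\beta Q\|^2$, and $-t\beta(t)^2\bigl[(1-\eta)\eta\varepsilon-\zeta/2\bigr]$ on $\|Q\|^2$. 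Integrating from $t_0$ to $T$ and using $E^\lambda(T)\geq 0$ (each summand is non-negative since $c=\lambda\zeta>0$ and (b) controls the cross-term) and then sending $T\to+\infty$ produces the three integral bounds.

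\textbf{Step 3: fast rates \ref{item:little_o_rates}.} I would invoke the standard ``integrable $+$ bounded variation implies little-$o$'' principle. Using the ODE \eqref{eq:second_order_dynamic_general} to express $\frac{d}{dt}Q(x(t))$ in terms of $\ddot x$, $\dot x$, and $Q(x)$, I would differentiate each of $t^2\beta(t)^2\|Q(x(t))\|^2$, $t\beta(t)\langle Q(x(t)),x(t)-x^*\rangle$, and $t\|\dot x(t)\|^2$ and control the derivative in $L^1([t_0,+\infty))$ by recycling the integral bounds from Step~2. Together with the integrability of each quantity divided by $t$, this forces each one to vanish as $t\to+\infty$.

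\textbf{Step 4: weak convergence \ref{item:iters}.} I would apply the continuous-time Opial lemma. The demiclosedness part is direct: since $\beta$ is non-decreasing with $\beta(t_0)>0$, $t\beta(t)\to+\infty$, so Step~1 gives $\|Q(x(t))\|\to 0$; any weak cluster point $\bar x$ of $x(t)$ then lies in $\zer Q$ by sequential closure of the graph of the maximal monotone operator $Q$ in the weak--strong topology. The subtle piece is the existence of $\lim_{t\to+\infty}\|x(t)-x^*\|$ for every $x^*\in\zer Q$: since $E^\lambda$ is non-increasing and bounded below, it converges, and Step~3 shows that $\delta(t)\langle Q,x-x^*\rangle$ and $\frac{\xi(t)}{2}\|Q\|^2$ both vanish, so $\frac{c}{2}\|x(t)-x^*\|^2+\frac{1}{2}\|v^\lambda(t)\|^2$ admits a limit. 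One then isolates $\|x-x^*\|^2$ from $\|v^\lambda\|^2$ using the definition of $v^\lambda$, the integral bound $\int t\|\dot x\|^2\,dt<\infty$, and the big-O/little-$o$ control on $\beta(t) Q(x(t))$.

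\textbf{Main obstacle.} The hardest single step is the decoupling argument for $\|x(t)-x^*\|$ in Step~4: the energy $E^\lambda$ mixes the positional norm with the anchor-like quantity $v^\lambda$, and neither $E^\lambda$ nor $\|x(\cdot)-x^*\|^2$ is manifestly non-increasing, so extracting a limit for $\|x-x^*\|$ demands a careful bounded-variation analysis of the cross-term $t\langle x-x^*,\dot x+(1-\eta)\beta Q\rangle$ hidden inside $\|v^\lambda\|^2$. This creates a mild logical interdependence with Step~3, which I would resolve by first proving the fast rates and only then using them to close the Opial argument.
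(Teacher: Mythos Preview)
Your Steps~1 and~2 match the paper's proof almost exactly: setting $\lambda=\alpha-1$ kills the right coefficients and makes $E^{\alpha-1}$ nonincreasing, and then shifting to $\lambda=\alpha-1-\zeta$ with $0<\zeta<2\eta(1-\eta)\varepsilon$ makes every coefficient in \eqref{eq:lyapunov_analysis_descent} strictly negative, yielding the integral bounds after integration.

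Your Steps~3 and~4, however, diverge from the paper both in order and in substance, and Step~3 has a genuine gap. Differentiating $t^2\beta^2\|Q\|^2$ produces a term $2t^2\beta^2\langle Q,\tfrac{d}{dt}Q\rangle$; substituting the ODE for $\beta\tfrac{d}{dt}Q$ introduces $-t^2\beta\langle Q,\ddot x\rangle$, which is not covered by the three integral bounds you listed and does not obviously close even after integration by parts. Likewise for $t^2\|\dot x\|^2$ (which is what you need, not $t\|\dot x\|^2$) you must control $\int t^2\beta\langle\dot x,\tfrac{d}{dt}Q\rangle\,dt$; this \emph{is} finite---it falls out of integrating \eqref{eq:lyapunov_analysis_descent} with $\lambda=\alpha-1$---but you did not flag it, and the analogous term for $\|Q\|^2$ has no such easy source.

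The paper's route is cleaner and avoids $\ddot x$ entirely. It establishes \ref{item:iters} \emph{before} \ref{item:little_o_rates}. The key device is to compute the difference $E^\lambda_t-E^{\alpha-1}_t$ explicitly: after simplification it equals
\[
-(\alpha-1-\lambda)\Bigl[(\alpha-1)q(t)+t\dot q(t)-(\alpha-1)I(t)\Bigr],\quad q(t):=\tfrac{1}{2}\|x_t-x^*\|^2+I(t),\quad I(t):=\int_{t_0}^t\beta_s\langle Q_s,x_s-x^*\rangle\,ds.
\]
Since both energies converge and $I$ converges by \ref{item:integral_bounds}, the combination $q(t)+\tfrac{t}{\alpha-1}\dot q(t)$ has a limit; a standard first-order lemma (\cite[Lemma~A.2]{apr16}) then forces $\lim_{t\to\infty}q(t)$ to exist, hence $\lim\|x_t-x^*\|$ exists directly---this is exactly the ``decoupling'' you flagged as the main obstacle, and the paper resolves it \emph{without} the little-$o$ rates. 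For \ref{item:little_o_rates}, the paper then expands $E^{\alpha-1}_t$, subtracts the now-convergent pieces (including $t\dot q(t)$), and isolates $t^2\bigl(\|\dot x+(1-\eta)\beta Q\|^2+\eta(1-\eta)\beta^2\|Q\|^2\bigr)$ as having a limit $\ell$; if $\ell>0$ this contradicts the integral bounds, so $\ell=0$ and Jensen finishes.

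In short: your architecture for Steps~1--2 is right, but the bounded-variation argument in Step~3 does not close as stated because of the second-derivative terms. The paper sidesteps this entirely by exploiting two values of $\lambda$ simultaneously rather than by direct differentiation.
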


\rev{
\begin{rmk}[The gap function rate]\label{rmk:gap_function_rate_cont}
The quantity $\bra  Q(x(t)), x(t)-x^* \ket$ is obviously nonnegative by monotonicity of $Q$. To get the significance of the rates related to this quantity, that we dub \emph{gap function}, we first observe that in the simple case where $Q=\nabla f$, where $f$ is a smooth convex function, then $f(x(t))-\min_H f \leq \bra Q(x(t)), x(t) - x^*\ket$ by convexity, whence we recover the same convergence rate on the objective. This remains true even when minimizing $f+g$ where $f$ is an $L$-smooth function and $g$ is a proper lower semicontinuous convex function with $\mathrm{argmin}(f+g) \neq \emptyset$. We then take $Q(x)=\tau^{-1}\left(x - M(x)\right)$, where $\tau \in (0,2/L)$, $M(x)=\prox_{\tau g}\left(x-\tau\nabla f(x)\right)$, and $\prox_{\tau g}$ is the proximal mapping of $\tau g$.

We will see in Section~\ref{sec:splitting} that this gap function plays also a key role in applications to primal-dual-type algorithms, where it serves as a bound for the primal-dual gap, see Example \ref{example:chambolle_pock}.
\end{rmk}
}

\begin{proof}
To lighten notation, we abbreviate $(t)$ with $t$ as a subscript. Additionally, we write $Q_t$ instead of $Q(x(t))$ and $\partial_t$ instead of $\frac{d}{dt}$. Setting $\lambda := \alpha-1$ in \eqref{eq:lyapunov_analysis_descent}, discarding the negative terms, we get using \eqref{eq:growth_condition_2} and monotonicity of $Q$, that for almost all $t \geq t_0$:
	\begin{equation*}
		 \partial_t E^{\alpha-1}_t \leq -\varepsilon(\alpha-1)\rev{\eta} \beta_t \bra  Q_t, x_t-x^* \ket  -  \rev{\eta} t^2 \beta_t  \bra  \dot{x}_t, \partial_t Q_t \ket  \leq 0\,,
	\end{equation*}
\rev{where the last inner product is non-positive since, using monotonicity of $Q$,
\begin{equation}
	-\bra \dot x_t, \partial_t Q_t\ket = - \lim_{h \to 0} \left \langle \frac{x_{t+h}-x_t}{h}, \frac{Q(x_{t+h}) - Q(x_t)}{h} \right \rangle \leq 0\,.
\end{equation}
}
	Thus, $E_t^{\alpha-1}$ is nonincreasing and since all the terms in \eqref{eq:lyapunov_second_order_continuous} are nonnegative, we get
	\begin{equation*}
		(\alpha - 1)\rev{\eta} t\beta_t  \bra  Q_t, x_t - x^* \ket  + \frac{\eta(1-\eta)}{2} \beta_t^2 t^2 \|Q_t\|^2 \leq E^{\alpha-1}_{t_0}\,, \quad \text{for all} \ t\geq t_0 \,,
	\end{equation*}
	hence \eqref{eq:big_O_rates}. 
 
Let us now turn to claims \ref{item:integral_bounds}-\ref{item:little_o_rates}. Suppose that \eqref{eq:grow_condition} holds for some $\varepsilon >0$. Thus, \eqref{eq:lyapunov_analysis_descent}  and monotonicity of $Q$ yield for all $0 \leq \lambda \leq \alpha - 1$ and almost all $t \geq t_0$:
	\begin{equation*}
		\begin{aligned}
			 \partial_t E^\lambda_t  &\leq - \varepsilon \lambda \rev{\eta}  \beta_t  \bra  Q_t, x_t-x^* \ket  -\frac{t}{2}(\alpha-1-\lambda) \| \dot{x}_t \|^2  + \beta_t^2 t \left( -(1-\eta)\eta \varepsilon + \frac{1}{2}(\alpha-1-\lambda)\right) \|Q_t\|^2\,.
		\end{aligned}
	\end{equation*}
	Let $\bar{\lambda}:=\alpha-1-\varepsilon\eta(1-\eta)$. Observe that $\bar{\lambda} \in [3(\alpha-2)/4,\alpha-1]$. We then have 
	\begin{equation*}
		 0 \leq \alpha-1-\lambda \leq  \varepsilon\eta(1-\eta) \,, \quad \text{for all} \ \lambda \in [\bar{\lambda}, \alpha-1]\,.
	\end{equation*}
	It then follows that for all $\lambda \in [\bar{\lambda}, \alpha-1]$ and almost all $t \geq t_0$:
	\begin{equation}\label{eq:E_dot_2}
	\partial_t E^\lambda_t \leq -\lambda \rev{\eta} \varepsilon \beta_t  \bra  Q_t, x_t-x^* \ket  -\frac{ t}{2}(\alpha-1-\lambda) \| \dot{x}_t \|^2 - \frac{1}{2}\beta_t^2 t (1-\eta)\eta \|Q_t\|^2\,.
	\end{equation}
	Integrating this inequality and using that the energy function $E^\lambda$ is nonnegative, we get \ref{item:integral_bounds}.\smallskip
	
	To show \ref{item:iters}, we start by expressing $E_t^{\lambda} - E_t^{\alpha-1}$ which reads
	\begin{equation*}
		E_t^{\lambda} - E_t^{\alpha-1}  = -\rev{\eta}(\alpha-1-\lambda) \beta_t t  \bra  Q_t, x_t-x^* \ket  \\
		 + \frac{\lambda}{2}(\alpha-1-\lambda)\|x_t-x^*\|^2 + \frac{1}{2}\|v_t^\lambda\|^2 - \frac{1}{2}\|v_t^{\alpha-1}\|^2 .
	\end{equation*}
	Using the three point identity, we get
	\begin{align*}
	\frac{1}{2}\|v_t^\lambda\|^2 - \frac{1}{2}\|v_t^{\alpha-1}\|^2 
	&= - \frac{1}{2}\|v_t^\lambda-v_t^{\alpha-1}\|^2 + \bra v_t^\lambda,v_t^\lambda-v_t^{\alpha-1}\ket \\
	&=  - \frac{(\alpha-1-\lambda)(\alpha-1+\lambda)}{2}\|x_t-x^*\|^2 -(\alpha-1-\lambda)t \bra  x_t-x^*, \dot{x}_t + \rev{(1-\eta)} \beta_t Q_t  \ket . 
	\end{align*}
	Plugging this above we arrive at
	\begin{equation}\label{eq:difference_lyapunovs}
		\begin{aligned}
		E_t^{\lambda} - E_t^{\alpha-1} 	& = - \rev{\eta} (\alpha - 1 - \lambda) \beta_t t  \bra  Q_t, x_t-x^* \ket \\
		& \quad -\frac{(\alpha-1)(\alpha-1-\lambda)}{2} \|x_t-x^*\|^2 -(\alpha-1-\lambda)t \bra  x_t-x^*, \dot{x}_t + \rev{(1-\eta)} \beta_t Q_t  \ket \\
		& =  -(\alpha-1-\lambda) \beta_t t  \bra  Q_t, x_t-x^* \ket \\
		& \quad -\frac{(\alpha-1)(\alpha-1-\lambda)}{2} \|x_t-x^*\|^2 - (\alpha-1-\lambda)t \bra  x_t-x^*, \dot{x}_t  \ket \,.
	\end{aligned}
	\end{equation}
	Therefore, we get for almost all $t \geq t_0$
	\begin{equation}\label{eq:difference_lyapunovs_2}
		E_t^\lambda - E_t^{\alpha-1} = -(\alpha-1-\lambda) \Big[(\alpha-1)q(t) + t\dot{q}(t) -(\alpha-1)I(t)\Big]\,,
	\end{equation}
	where we have denoted:
	\begin{equation*}
			q(t):= \frac{1}{2}\|x_t-x^*\|^2 + I(t)\,,\quad \text{and} \quad I(t):= \int_{t_0}^t \beta_s \bra  Q_s, x_s-x^* \ket  ds\,.
	\end{equation*}	
	Note that $I(t)$ admits a limit as $t \rightarrow +\infty$ by \ref{item:integral_bounds}. Moreover, from \eqref{eq:E_dot_2} we also get that for each $\lambda \in [\bar{\lambda}, \alpha -1]$ the limit $\lim_{t\to +\infty} E_t^\lambda$ exists. In particular the limit of $E_t^{\lambda} - E_t^{\alpha-1}$ exists. Hence, we deduce from \eqref{eq:difference_lyapunovs_2} that $\lim_{t \to +\infty} q(t) + \frac{t}{\alpha-1} \dot{q}(t)$ exists. This implies, thanks to \cite[Lemma~A.2]{apr16} and since $\alpha >2$, that also $\lim_{t\to +\infty} q(t)$ exists. In view of the definition of $q(t)$, $I(t)$ admitting a limit, we get that $\lim_{t\to+\infty}\|x_t-x^*\|$ exists. $x(t)$ is in particular bounded, and one can extract a weakly convergent subsequence. Using \eqref{eq:big_O_rates} and that $t \mapsto \beta(t)$ is nondecreasing, it is immediate to see that the weak cluster point of each such weakly converging subsequence is a zero of $Q$. We now invoke the standard Opial's lemma to get \ref{item:iters}. \smallskip
	
    Let us now establish the little-$o$ rates in \ref{item:little_o_rates}. To do so, we first note that, since the limit of $t \mapsto q(t)$ exists, from \eqref{eq:difference_lyapunovs_2} we get:
	\begin{equation*}
		\lim_{t +\infty } t \dot{q}(t) = \lim_{t\to +\infty}  t \bra  \dot{x}_t + \beta_t Q_t, x_t-x^* \ket  \in \R\,.
	\end{equation*}
	Therefore, since for almost all $t \geq t_0$
	\begin{equation*}
		\begin{aligned}
			E_t^{\alpha-1}&= \frac{1}{2}\|(\alpha-1) (x_t-x^*) + t( \dot{x}_t+ \rev{(1-\eta)}\beta_t Q_t)\|^2  + \rev{\eta}(\alpha-1) \beta_t t  \bra  Q_t, x_t- x^* \ket  + \frac{\eta}{2}(1-\eta)\beta_t^2t^2\|Q_t\|^2\\
			&= \frac{1}{2}(\alpha-1)^2\| x_t-x^*\|^2 + (\alpha-1) t  \bra  x_t-x^*, \dot{x}_t + \beta_t Q_t \ket\\
			& \quad  +\frac{t^2}{2}\|\dot{x}_t + \rev{(1-\eta)} \beta_t Q_t\|^2 + \frac{\eta}{2}(1-\eta)\beta_t^2t^2\|Q_t\|^2\,,
		\end{aligned}
	\end{equation*}
	we deduce that
	\begin{equation*}
		0 \leq \ell := \lim_{t\to +\infty} \ t^2\left( \| \dot{x}_t+ \rev{(1-\eta)} \beta_t Q_t\|^2 + \eta(1-\eta )\|\beta_t Q_t\|^2\right)
	\end{equation*}
	exists.	Suppose that $\ell > 0$. Then, there exists $s \geq t_0$ such that
\begin{align*}
\int_{t_0}^{+\infty}t&\left( \| \dot{x}_t+\rev{(1-\eta)} \beta_t Q_t\|^2 + \eta(1-\eta )\|\beta_t Q_t\|^2\right) dt\\
&\geq \int_{s}^{+\infty}t^2\left( \| \dot{x}_t+\rev{(1-\eta)} \beta_t Q_t\|^2 + \eta(1-\eta )\|\beta_t Q_t\|^2\right) t^{-1} dt \geq \int_{s}^{+\infty}\frac{\ell}{2t} dt = +\infty \,,
\end{align*}
leading to a contradiction with the last two integral bounds in \ref{item:integral_bounds}. This shows that $\ell=0$ and combining this with Jensen's inequality proves the last two little-$o$ rates in \ref{item:little_o_rates}.  \rev{The first rate follows from the second by applying Cauchy--Schwarz and using the boundedness of the trajectories.}
\end{proof}
\rev{\begin{rmk}[On the growth of $\beta$]
	The parameter function $\beta$ can be understood as a \emph{time-scaling} parameter \cite{acr19_siopt}. As such, it directly impacts the rate of convergence, as shown in Theorem \ref{thm:second_order_direct}\ref{item:little_o_rates}. However, contrarily to classical \emph{autonomous} first-order dynamical systems, where the time-scaling parameter can grow arbitrarily fast while maintaining convergence, the non-autonomous nature of our second-order dynamical system---particularly the fixed decay rate of the vanishing damping parameter  $\frac{\alpha}{t}$---imposes a fundamental constraint on the growth of $\beta$. Indeed, applying Gr\"onwall's inequality to integrate \eqref{eq:grow_condition}, we get that $\beta(t)$ must satisfy
    \[
    \beta(t) \leq \beta_0 t^{\alpha-2}
    \]
    for $\beta_0 >0$.
    Clearly, the growth of $\beta(t)$ depends on the vanishing damping parameter $\alpha$. Choosing \( \beta(t) = t^{\alpha - 2 - \varepsilon} \) with \( \varepsilon \in (0, \alpha - 2) \), we can achieve arbitrarily fast decay rates \( o(t^{-\nu}) \), where \( \nu := \alpha - 1 - \varepsilon \), by increasing \( \alpha \).
\end{rmk}}

\rev{
\subsection{Edge cases and connection to Tikhonov regularization}\label{sec:connection_to_tichonov}

It is instructive to inspect the edge cases in Theorem~\ref{thm:second_order_direct}. We can recognize two different boundary regimes: $\theta \in \{1, \alpha-1\}$, which corresponds to $\eta \in \{0, 1\}$, and $\varepsilon = 0$. 

\subsubsection{Edge cases in $\theta$}\label{sec:edge_in_theta} As observed by Attouch, Chbani and Riahi in \cite{acr19_siopt}, as well as in \cite{bot2023, botkhoa2023}, while the analysis of the continuous-time system \eqref{eq:second_order_dynamic_general} can be performed with general parameter functions $\beta$ satisfying the growth condition \eqref{eq:grow_condition}, constant parameters $\beta$ are the only choices currently compatible with explicit numerical schemes. For this reason, we focus here on this important setting, and, without loss of generality, we set $\beta=1$. We have the two edge cases:

\begin{itemize}[left=0pt]
\item \textbf{$\theta=1$:} We get the system
\begin{equation}\label{eq:second_order_theta_1}
	\ddot x(t) + \frac{\alpha}{t}\dot x(t) + \frac{d}{dt}Q(x(t)) + \frac{1}{t}Q(x(t)) = 0\,, \quad \text{for} \ t \geq t_0\,.
\end{equation}
Multiplying \eqref{eq:second_order_theta_1} by $t$, we can rewrite \eqref{eq:second_order_theta_1} as
\begin{equation}
	\frac{d}{dt}\Big(t \dot x(t) + tQ(x(t))\Big) + (\alpha-1)\dot x(t) = 0\,, \quad \text{for} \ t \geq t_0\,, 
\end{equation}
which, integrating from $t_0$ to $t$ leads us to
\begin{equation}
	\dot x(t) + Q(x(t)) + \frac{(\alpha-1)}{t}(x(t) - v)=0\,, \quad \text{for} \ t \geq t_0\,,
\end{equation}
where $v:= x_0 + \frac{t_0}{\alpha-1}(\dot{x}(t_0) + Q(x_0))$. This is the so-called \emph{anchored} or \emph{Tikhonov regularization} of the  flow \eqref{eq:monotone_flow}, which is known to provide i) strong convergence to the orthogonal projection of $v$ onto $\zer Q$ and ii) the rate $\|Q(x(t))\|=\cO(t^{-1})$ for all $\alpha \geq 2$, see, e.g., \cite{bk24, bc24}. 

The rate on the residual cannot be improved to a little-$o$ rate, as the following simple counterexample in dimension 2 shows: Pick $\alpha =3$ and $Q\colon \R^2\to \R^2$ being the $90^\circ$ counter-clockwise rotation, which is indeed monotone and Lipschitz. Identifying $x(t)$ with a complex number, and $Q(x(t))$ as $i x(t)$, the general solution  $x\colon [t_0,+\infty)\to \mathbb{C} \cong \R^2 = H$ of \eqref{eq:second_order_theta_1} can be obtained explicitly and is defined for $c_1, c_2 \in \R$, as
\begin{equation}
	x(t) = \frac{c_1 (1 - i t)}{t^2} + \frac{c_2}{t^2} e^{-i t}\,, \quad \text{for $t\geq t_0$}\,.
\end{equation}
As it can be immediately observed, this only satisfies $t\|Q(x(t))\|= t \|i x(t)\| = t\|x(t)\| \geq  \frac{c_1}{2}$ for all $t \geq t_1$ and $t_1>t_0$ large enough. Therefore, the little-$o$ rates \emph{cannot} be achieved in this setting.

\item \textbf{$\theta=\alpha-1$:} It leads us to the system:
\begin{equation}\label{eq:second_order_theta_2}
	\ddot x(t) + \frac{\alpha}{t}\dot x(t) + \frac{d}{dt}Q(x(t)) + \frac{\alpha-1}{t}Q(x(t)) = 0\,, \quad \text{for} \ t \geq t_0\,.
\end{equation}
The same choice as in the first point above, i.e., with $Q(x(t))=ix(t)$, $\alpha = 3$, and $H=\R^2\cong \mathbb{C}$, leads us to a general solution $x\colon [t_0, +\infty)\to \mathbb{C}$ defined for $c_1, c_2 \in \R$, as,
\begin{equation}
	x(t) = \frac{c_1}{t^2} + \frac{c_2  (1 + i t)}{t^2}e^{-i t}\,, \quad \text{for} \ t \geq t_0\,,
\end{equation}
which, similarly as before, only satisfies $t\|Q(x(t))\|=t\|ix(t)\|=t\|x(t)\|\geq \tfrac{c_2}{2}$ for all $t\geq t_1$ and $t_1>t_0$ large enough. Also in this edge case no little-$o$ rates on the residual can be achieved. However, differently from the case $\theta=1$, the dynamics \eqref{eq:second_order_theta_2} does not seem to reduce to a known first-order dynamical system and its asymptotic properties for general monotone Lipschitz operators remain an open problem. 
\end{itemize}

\subsubsection{Saturating the growth condition}\label{sec:edge_in_eps}
Let us now consider a non-constant parameter $\beta$, but saturating the growth condition \eqref{eq:grow_condition}, i.e., with $\varepsilon = 0$, which yields, for $\alpha \geq 2$,
\begin{equation}
	\dot{\beta}(t) + \frac{\beta(t)}{t}(2-\alpha) = 0\,, \quad \text{which is solved for} \quad \beta(t) = \beta_0 t^{\alpha-2}\,, \quad \text{for} \ \beta_0 > 0\,.
\end{equation}
One can check that in this case $b(t)=(1-\eta)\dot{\beta}(t) + \rev{\theta} \frac{\beta(t)}{t} = \dot{\beta}(t) + \frac{\beta(t)}{t}$ and thus \eqref{eq:second_order_dynamic_general} reads as
\begin{equation}\label{eq:second_order_dynamic_general_emphasized}
	\ddot{x}(t) + \frac{\alpha}{t}\dot{x}(t) + \beta(t) \frac{d}{dt} Q(x(t)) + \Big(\dot{\beta}(t) + \frac{1}{t}\beta(t)\Big) Q(x(t))=0\,, \quad \text{for} \  t \geq t_0\,.
\end{equation}
Multiplying \eqref{eq:second_order_dynamic_general_emphasized} by $t$ we shall write:
\begin{equation}\label{eq:saturating_growth_derivative_form}
	\begin{aligned}
		0 &= t\ddot{x}(t) + \dot{x}(t) + t\beta(t) \frac{d}{dt} Q(x(t)) + \left(t\dot{\beta}(t) + \beta(t)\right) Q(x(t)) +  (\alpha-1) \dot{x}(t)=0\\
		& = \frac{d}{dt} \Big[ t \big(\dot{x}(t) + \beta(t) Q(x(t))\big)\Big] + (\alpha - 1)\dot{x}(t)\,.
	\end{aligned}
\end{equation}
Integrating \eqref{eq:saturating_growth_derivative_form} from $t_0$ to $t$, we unveil two interesting properties. First, if $\alpha = 1$ the system is fundamentally equivalent to a first-order system with perturbation and time-scaling (see \cite{attouch2023fast}). Secondly, if $\alpha > 1$, we can rewrite it as
\begin{equation}\label{eq:Tychonov_flow}
	\dot{x}(t) + \beta_0 t^{\alpha - 2} Q(x(t)) + \frac{\alpha-1}{t}\left(x(t)-{v}\right)  = 0\,, \quad t \geq t_0\,,
\end{equation}
where $v:= \frac{t_0}{\alpha-1}(\dot{x}(t_0) + \beta(t_0) Q(x(t_0))) + x(t_0)$. We are once again in the setting of Tikhonov regularization and continuous-time version of Halpern method: The system \eqref{eq:Tychonov_flow} generates trajectories that converge \textit{strongly} to the projection of $v$ onto the set of zeros of $Q$ with convergence rate $\|Q(x(t))\|=\cO(t^{-(\alpha-1)})$, see, e.g., \cite{cps08} and \cite[Corollary 2.16.]{bk24} for the rate.
}

\section{Convergence Analysis of Algorithm~\ref{alg:fast_km}}\label{sec:algconv}

In this section, we present the convergence analysis of Algorithm~\ref{alg:fast_km}. Let us first show that the latter can indeed be understood as a discretization of \eqref{eq:second_order_dynamic_general}. Let $(x^k)_{k \in \N}$ be the sequence generated by Algorithm~\ref{alg:fast_km}, $Q:=I-T$ and, for all \rev{$k \geq -1$}, set
\begin{equation}\label{eq:substitutions}
	z^{k+1} := T(x^k)\,, \quad \text{and} \quad Q^{k+1}:=Q(x^k) =(I-T)(x^{k})= x^k - z^{k+1}\,.
\end{equation}
Recall that Algorithm~\ref{alg:fast_km} reads, for all \rev{$k \geq 0$}, as
\rev{
\begin{equation}
	x^{k+1}-x^{k}=\frac{\theta}{k+\sigma} \big(T(x^k)-x^k\big) + \Big(1 -\frac{\alpha}{k+\sigma}\Big) \big(T(x^k)-T(x^{k-1})\big)\,.
\end{equation}
Therefore, using the definition of $Q$ and \eqref{eq:substitutions}, we get:
\begin{equation}\label{eq:xkzk}
	z^{k+2}-z^{k+1} + Q^{k+2}-Q^{k+1} = -\frac{\theta}{k+\sigma} Q^{k+1} + \Big(1 -\frac{\alpha}{k+\sigma}\Big)(z^{k+1}-z^k)\,.
\end{equation}}
Now, bringing every term to the left-hand side, we get:
\begin{equation}
	z^{k+2}- 2z^{k+1}+z^k + \frac{\alpha}{k+\sigma}(z^{k+1}-z^k) +   \left({Q}^{k+2}-{Q}^{k+1}\right) +   \frac{\rev{\theta}}{k+\sigma}{Q}^{k+1}=0\,.
\end{equation}
Denoting by $t_{k}:=k - 1 + \sigma$, we get for all \rev{$k \geq 0$}:
\begin{equation}\label{eq:discrete_second_order}
	\left(z^{k+2}- 2z^{k+1}+z^k\right) + \frac{\alpha}{t_{k+1}}\left(z^{k+1}-z^k\right) +  \left(Q^{k+2}-Q^{k+1}\right) + \frac{\rev{\theta}}{t_{k+1}} Q^{k+1}=0\,,
\end{equation}
which, when $z^{k+1} \simeq x^k$ (asymptotically the case), can be understood as an explicit discretization of \eqref{eq:second_order_dynamic_general} with $\beta \equiv 1$, step size $1$, and $Q:=I-T$, which is Lipschitz continuous and monotone. \rev{This explicit discretization differs from the one proposed in \cite{botkhoa2023} in the specific case $\theta = \frac{\alpha}{2}$. The latter relies on a clever product-space reformulation of the second-order dynamical system, whereas \eqref{eq:discrete_second_order} preserves the structural form of \eqref{eq:second_order_dynamic_general}. As we will see in Section~\ref{sec:Lyapunov_energy_discrete}, this will allow us to construct a new energy function that almost exactly mirrors the continuous-time one in \eqref{eq:lyapunov_second_order_continuous}, all the while allowing for twice larger relaxation parameters.}
\rev{
\begin{rmk}[Resolvent viewpoint]\label{rmk:resolvent_viewpoint}
	Consider the multivalued operator $\cQ \colon H\to 2^H$ defined by $\cQ:=T^{-1}-I$. In this way, $T$ can be represented as the resolvent operator of $\cQ$, i.e., $T=J_{\cQ}:=(I+\cQ)^{-1}$. Building on Minty's classical result \cite{minty62}, nonexpansivity properties of $T$ translate to monotonicity properties for $\cQ$, and vice-versa. Specifically, if $T$ is firmly nonexpansive, then $\cQ$ is maximal monotone. More generally, if $T$ is only nonexpansive, then $\cQ$ is $\frac{1}{2}$-\emph{comonotone}, i.e., 
	\begin{equation}
		\bra q' - q, z' - z\ket \geq -\frac{1}{2}\|q' - q\|^2\,, \quad \text{for all} \ (z, q), (z', q') \in \gra \cQ\,,
	\end{equation} 
	see \cite{bmw21} for a comprehensive discussion. Using this notation, we can write:
	\begin{equation}
		Q^{k+1} =(I-T)(x^k) \in \cQ(T(x^k))=\cQ(z^{k+1})\,, \quad \text{for all} \ k \geq 0\,.
	\end{equation}
	 Comparing now with \eqref{eq:discrete_second_order}, despite $\cQ$ not being necessarily Lipschitz continuous, not even single-valued, Algorithm \ref{alg:fast_km} can be understood as a \emph{semi-}implicit finite-difference discretization of \eqref{eq:second_order_dynamic_general}---implicit on the ``Hessian''-driven damping term $\frac{d}{dt}Q(x(t))$ and explicit on the force term $\frac{\theta}{t}Q(x(t))$---and with $\beta \equiv 1$, step size $1$, and $Q:=\cQ$, now formulated on a single sequence $(z^k)_{k \in \N}$.
	 
	 The main difference with the implicit discretization of the Fast-OGDA system proposed in \cite{bot2023} is that the latter, being implicit also on the force term, does not lead to an easily implementable numerical method as it involves computing the resolvent of $\cQ$ with an iteration-dependent step size, which is hardly possible in practice. While being hard to implement in practice, the approach in \cite{bot2023} allows to consider more general time-scaling parameters $\beta_k$, yielding improved rates in the same fashion as in the classical G\"{u}ler acceleration for proximal-point algorithms \cite{guler91, guler92}, see also \cite[Remark 2]{bot2023} for a more complete account on the role of $\beta$. 
\end{rmk}}

\subsection{Lyapunov Energy}\label{sec:Lyapunov_energy_discrete}

To analyze Algorithm~\ref{alg:fast_km}, we denote finite differences with the $\Delta$ symbol, i.e., for a sequence $(a^k)_{k \in \N}$, we define $\Delta a_k := a^{k+1}-a^k$ for all $k \geq 0$. Second order differences are denoted by $\Delta^2$, i.e., $\Delta^2 a_k = \Delta (\Delta a)_{k} := \Delta a_{k+1}-\Delta a_k$. These notations will prove handy in our analysis since we follow closely the continuous-time Lyapunov analysis. In this sense, it is convenient to recall some discrete-time calculus rule listed in Appendix \ref{sec:discrete_calculus}.

Inspired by the continuous-time approach, we introduce an energy function that matches almost exactly\footnote{Yet another difference with prior work and in particular with \cite{bot2023, acfr20} is that the term in $v^k$ is scaled differently.} \eqref{eq:lyapunov_second_order_continuous}. Specifically, we consider for all \rev{$k \geq -1$} the sequences $z^{k+1}:=T(x^k)$ and $Q^{k+1}:=Q(x^{k})$, where $Q:=I-T$, and for all \rev{$k \geq 1$}, $\lambda \in (0, \alpha- 1]$, and any $z^* \in \zer Q=\Fix T$, the energy function:
\begin{equation}\label{eq:nonexp_discrete_lyapunov}
	\begin{aligned}
		E_k^\lambda &:= \delta_k  \bra  Q^k, z^k-z^* \ket  + \frac{\delta_k}{2}\|Q^k\|^2 + \frac{\xi_k}{2}\|Q^k\|^2 + \frac{c}{2}\|z^{k-1} - z^*\|^2 + \frac{1}{2}\|v^k\|^2\,,\\
		v^k&:= \lambda (z^{k-1}-z^*) + t_{k-1}\left( \Delta z_{k-1}  + \rev{(1-\eta)} Q^k\right)\,, 
	\end{aligned}
\end{equation}
where $t_{k}:=k -1 + \sigma$, \rev{a constant $\lambda\geq 0$, and parameters given by
\begin{equation}
	\delta_k := \eta\lambda t_{k-1}\,, \quad \xi_k := (1-\eta)\eta t_{k-1}^2\,, \quad c := \lambda(\alpha - 1-\lambda)\,, \quad \text{for} \ k \geq 1\,.
\end{equation}}
Note that the only substantial difference with \eqref{eq:lyapunov_second_order_continuous} is that the first inner product $ \bra  Q^k, z^k - z^* \ket $ is no longer nonnegative, since $Q^k=Q(x^{k-1})$, i.e. $Q$ is evaluated at $x^{k-1}$ rather than at $z^k$. \rev{Observe that, in light of Remark \ref{rmk:resolvent_viewpoint}, this inner product would be nonnegative as soon as $T$ is firmly nonexpansive, in which case $Q^k \in \cQ(z^k)$, with $\cQ$ maximal monotone.} The second term allows us to make $E_k^\lambda$ nonnegative even for general nonexpansive operators. Indeed, observing that $Q/2$ is firmly nonexpansive, it follows from \cite[Proposition~4.4]{BCombettes} that for every $x, y \in H$: 
\begin{equation}\label{eq:non_monotonicity_of_Q}
	\begin{aligned}
		 \bra Q(x)-Q(y),T(x)-T(y) \ket 
		 &= \bra Q(x)-Q(y),x-y \ket - \|Q(x)-Q(y)\|^2  \geq -\frac{1}{2}\|Q(x)-Q(y)\|^2\,.
	\end{aligned}
\end{equation}
Using this with $x=x^{k-1}$ and $y=z^* \in \Fix T$, hence $Q(z^*)=0$, we get that $\bra  Q^k, z^k-z^* \ket  + \frac{1}{2}\|Q^k\|^2 \geq 0$.

\subsection{Energy Decrease}\label{sec:discrete_lyapunov_analysis} We start with the following preliminary result, which represents the discrete-time counterpart of Lemma \ref{lem:derivative_of_Lyapunov_second_order}, \rev{and our first main contribution:}
\begin{lem}\label{lem:nonexp_derivative_of_Lyapunov_second_order_discrete}
	Let $T\colon H\to H$ be a nonexpansive operator with $\Fix T \neq \emptyset$. Let $(x^k)_{k \in \N}$ be a sequence generated by Algorithm~\ref{alg:fast_km} for some $\eta \in (0, 1)$, $\sigma > 0$, and $\alpha\geq 2$. Then there exists $\underline{\lambda}< \alpha-1$ such that for all $\lambda \in (\underline{\lambda}, \alpha-1]$ and all \rev{$k \geq 1$} such that $t_k \geq \rev{\lambda}$ it holds:
	\begin{equation}\label{eq:nonexp_descent_E_claim}
		\begin{aligned}
			E_{k+1}^\lambda-E_k^{\lambda} & \leq - \lambda \rev{\eta}\left(\alpha - 2\right) \left(\bra x^{k-1}-T(x^{k-1}), T(x^{k-1}) - z^* \ket + \frac{1}{2} \|x^{k-1}-T(x^{k-1})\|^2\right) \\
			&\quad \bigg[ - \eta (1-\eta) (\alpha-2) t_k  -\frac{\rev{\eta}}{2}\Big(\rev{1 - \eta} + \rev{\eta}(\alpha-1)^2 -\lambda(\alpha-1) \Big) \bigg]\|x^{k-1} - T(x^{k-1})\|^2 \\
			& \quad +  (\alpha-1-\lambda)\frac{t_k-\left(2\rev{\eta}\rev{\theta} + \rev{\eta}+\rev{\theta}\right)}{2}\|x^{k-1}-T(x^{k-1})\|^2 \\
			& \quad - (\alpha-1-\lambda)\frac{t_k-\left(\alpha+2+\rev{\eta}(\alpha-1)\right)}{2}\|T(x^{k-1}) - T(x^{k-2})\|^2 \, .
		\end{aligned}
	\end{equation}
\end{lem}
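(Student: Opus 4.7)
The plan is to mirror the continuous-time computation of Lemma~\ref{lem:derivative_of_Lyapunov_second_order} using discrete calculus, where the key substitutes for differentiation are the discrete Leibniz rule $\Delta(a_k b_k)=a_{k+1}\Delta a_k + b_k\Delta a_k$ (and its variant $\Delta(a_k b_k)=a_k\Delta b_k + b_{k+1}\Delta a_k$) together with the polarization $\tfrac{1}{2}\Delta\|a^k\|^2=\bra a^k,\Delta a^k\ket+\tfrac{1}{2}\|\Delta a^k\|^2$ recalled in Appendix~\ref{sec:discrete_calculus}. Given $E^\lambda_k$ in~\eqref{eq:nonexp_discrete_lyapunov}, I will compute $\Delta E^\lambda_k$ term by term and substitute the second-order difference $\Delta^2 z_k$ using~\eqref{eq:discrete_second_order}, which is the discrete counterpart of the dynamics~\eqref{eq:second_order_dynamic_general}.

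Concretely, I would proceed in three steps. First, expand $\tfrac{1}{2}\Delta\|v^k\|^2$ by computing $\Delta v^k$ from its definition and using~\eqref{eq:discrete_second_order} to trade the term $t_k\Delta^2 z_{k-1}$ appearing in $\Delta v^k$ for a linear combination of $\Delta z_{k-1}$, $Q^{k+1}$ and $\Delta Q^{k+1}$ with coefficients $\alpha/t_k$ and $\theta/t_k$; this yields the discrete analogue of the $\tfrac{1}{2}\tfrac{d}{dt}\|v^\lambda(t)\|^2$ computation. Second, expand $\Delta\bigl(\delta_k\bra Q^k,z^k-z^*\ket\bigr)$ and $\Delta\bigl(\tfrac{1}{2}(\delta_k+\xi_k)\|Q^k\|^2\bigr)$ via the Leibniz rule, producing the explicit shifts $\Delta\delta_k=\eta\lambda$ and $\Delta\xi_k=\eta(1-\eta)(t_k+t_{k-1})$. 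Third, handle $\Delta\tfrac{c}{2}\|z^{k-1}-z^*\|^2$ by the three-point identity. I then regroup all contributions according to the canonical quantities $\bra Q^k,z^k-z^*\ket+\tfrac{1}{2}\|Q^k\|^2$, $\|Q^{k+1}\|^2$, $\|\Delta z_{k-1}\|^2$, and the discrete cross-term $\bra \Delta z_{k-1},\Delta Q^{k+1}\ket$, which plays the role of the continuous-time $\bra\dot x,\partial_t Q\ket$.

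To turn the resulting identity into the claimed inequality, I invoke~\eqref{eq:non_monotonicity_of_Q} in two ways. Applied to $(x,y)=(x^{k-1},z^*)$ it yields $\bra Q^k,z^k-z^*\ket+\tfrac{1}{2}\|Q^k\|^2\geq 0$, so that a term with this sign can be kept on the right-hand side with the prefactor $-\lambda\eta(\alpha-2)$ (exactly as in the continuous-time proof, where the growth condition~\eqref{eq:grow_condition} provided $\dot\beta+(2-\alpha)\beta/t\leq 0$). Applied to $(x,y)=(x^{k-1},x^{k-2})$, it controls the cross-term $\bra\Delta z_{k-1},\Delta Q^{k+1}\ket=\bra T(x^{k-1})-T(x^{k-2}),Q(x^{k-1})-Q(x^{k-2})\ket$, up to a $-\tfrac{1}{2}\|\Delta Q^{k+1}\|^2$ remainder that can be absorbed into the $\|Q^k\|^2$ coefficient. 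Substituting $x^{k-1}-T(x^{k-1})=Q^k$ and $T(x^{k-1})-T(x^{k-2})=\Delta z_{k-1}$ then rewrites everything in the variables appearing in the statement.

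The main obstacle is twofold. First, unlike the continuous case, the discrete Leibniz rule generates systematic $+\tfrac{1}{2}\|\Delta\cdot\|^2$ remainders and coefficient shifts $\delta_{k+1}-\delta_k$, $\xi_{k+1}-\xi_k$, $t_k-t_{k-1}$; accurately bookkeeping these is what produces the sub-leading constants $2\eta\theta+\eta+\theta$ and $\alpha+2+\eta(\alpha-1)$ in the claim, and is the main source of technical effort. Second, one must verify that for $\lambda$ slightly below $\alpha-1$ the coefficient multiplying $\|Q^k\|^2$ stays non-positive; since that coefficient reads $-\eta(1-\eta)(\alpha-2)t_k-\tfrac{\eta}{2}\bigl(1-\eta+\eta(\alpha-1)^2-\lambda(\alpha-1)\bigr)$, its sign is dominated by the leading $t_k$-term whenever $\alpha>2$, and a continuity argument at $\lambda=\alpha-1$ pins down the admissible threshold $\underline{\lambda}$.
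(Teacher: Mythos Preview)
Your plan matches the paper's proof: expand $\Delta E_k^\lambda$ via the discrete Leibniz rule, substitute $\Delta^2 z_{k-1}$ from~\eqref{eq:discrete_second_order}, regroup into the canonical quantities, and use~\eqref{eq:non_monotonicity_of_Q} on the surviving cross-term. Two small corrections are worth flagging. First, the cross-term that emerges carries index $k$, not $k-1$: it is $\langle \Delta Q_k,\Delta z_k\rangle=\langle Q(x^k)-Q(x^{k-1}),T(x^k)-T(x^{k-1})\rangle$, with coefficient $-(t_k^2-\lambda t_k)\eta$; this is precisely where the hypothesis $t_k\geq\lambda$ is used, so that the combination $-(t_k^2-\lambda t_k)\eta\bigl(\langle \Delta Q_k,\Delta z_k\rangle+\tfrac{1}{2}\|\Delta Q_k\|^2\bigr)$ is nonpositive and can be discarded. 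Second, the sub-leading constants $2\eta\theta+\eta+\theta$ and $\alpha+2+\eta(\alpha-1)$ arise from applying Young's inequality to the residual inner products (the paper's ``$(N)$'' terms), not from~\eqref{eq:non_monotonicity_of_Q}; the ``first application'' of~\eqref{eq:non_monotonicity_of_Q} you describe is in fact a purely algebraic regrouping in the lemma's proof, with the nonnegativity of $\langle Q^k,z^k-z^*\rangle+\tfrac{1}{2}\|Q^k\|^2$ only exploited afterwards in Theorem~\ref{thm:nonexp_second_order_discrete}.
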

Observe that the first term on the right-hand side is negative thanks to \eqref{eq:non_monotonicity_of_Q}. The proof of Lemma \ref{lem:nonexp_derivative_of_Lyapunov_second_order_discrete}, which is postponed to Appendix \ref{sec:appendix_lemmas}, follows exactly the continuous-time approach with $Q:=I-T$. We are now ready to establish the discrete counterpart of Theorem~\ref{thm:second_order_direct}.

\begin{thm}\label{thm:nonexp_second_order_discrete}
	Let $T\colon H\to H$ be a nonexpansive operator with $\Fix T \neq \emptyset$. Let $(x^k)_{k \in \N}$ be a sequence generated by Algorithm~\ref{alg:fast_km} for some $\eta \in (0, 1)$, $\sigma > 0$, and $\alpha\geq 2$. Then:
	\begin{enumerate}[label=(\roman*)]
		\item \label{item:nonexp_halpern} We have $\|x^k - T(x^k)\| = \cO(k^{-1})$ as $k \to +\infty$.
	\end{enumerate}
	Suppose now that $\alpha > 2$ \rev{and let $z^* \in \Fix T$}. Then:
	\begin{enumerate}[label=(\roman*)]
		\setcounter{enumi}{1}
		\item \label{item:nonexp_discr_summability} We have the summability properties:
        \begin{equation*}
            \begin{aligned}
             &\sum_{k \in \N} \left(\bra  x^{k}-T(x^{k}), T(x^{k}) - z^* \ket + \frac{1}{2} \| x^{k}-T(x^{k})\|^2\right) < +\infty\,,\\
             &\sum_{k \in \N} k\|x^k - T(x^k)\|^2 < +\infty\,,  \quad \text{and} \quad \sum_{k \in \N}k \|x^k - x^{k-1}\|^2  < +\infty\,.
            \end{aligned}
        \end{equation*}
		\item \label{item:nonexp_discr_iters} The sequence $(x^k)_{k \in \N}$ converges weakly to a fixed point of $T$ as $k\to + \infty$.
		\item \label{item:nonexp_discr_rates} We have the following \textit{fast} rates of convergence:
		\begin{equation*}
			\begin{aligned}
			&\rev{\bra  x^{k}-T(x^{k}), T(x^{k}) - z^* \ket + \frac{1}{2} \| x^{k}-T(x^{k})\|^2 = o(k^{-1})\,,}\\
			&\|x^k-x^{k-1}\|  = o(k^{-1}) \quad \mbox{and} \quad \|x^k - T(x^k)\| = o(k^{-1}) \quad \mbox{as} \ k \rightarrow +\infty\,.
			\end{aligned}
		\end{equation*}
	\end{enumerate}
\end{thm}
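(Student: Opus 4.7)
The plan is to mirror the proof of Theorem~\ref{thm:second_order_direct} step by step, with Lemma~\ref{lem:nonexp_derivative_of_Lyapunov_second_order_discrete} replacing Lemma~\ref{lem:derivative_of_Lyapunov_second_order}; recall from Section~\ref{sec:Lyapunov_energy_discrete} that $E_k^\lambda$ is nonnegative for $\lambda\in[0,\alpha-1]$ thanks to \eqref{eq:non_monotonicity_of_Q}. For (i), specialize \eqref{eq:nonexp_descent_E_claim} to $\lambda=\alpha-1$: the last two terms vanish, and a short algebraic simplification shows that the coefficient of $\|x^{k-1}-T(x^{k-1})\|^2$ reduces to $-\eta(1-\eta)(\alpha-2)(t_k-\alpha/2)$, which is nonpositive for $k$ large enough. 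Together with the nonpositivity of the first inner-product term via \eqref{eq:non_monotonicity_of_Q}, this makes $E_k^{\alpha-1}$ eventually nonincreasing, hence bounded; since $E_k^{\alpha-1}\geq\tfrac{\eta(1-\eta)}{2}t_{k-1}^2\|Q^k\|^2$, we obtain $\|x^k-T(x^k)\|=\cO(k^{-1})$.

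For (ii), assume $\alpha>2$ and pick $\lambda$ in an interval $(\lambda_\star,\alpha-1)$ with $\lambda_\star$ close enough to $\alpha-1$ so that, for $k\geq k_0$, every coefficient on the right-hand side of \eqref{eq:nonexp_descent_E_claim} is nonpositive; this requires $t_k$ to exceed the thresholds $2\eta\theta+\eta+\theta$ and $\alpha+2+\eta(\alpha-1)$, while keeping the $\lambda$-independent bracket negative. Telescoping \eqref{eq:nonexp_descent_E_claim} and using $E_k^\lambda\geq 0$ yields all three summability statements on $\bra Q^{k+1},z^{k+1}-z^*\ket+\tfrac{1}{2}\|Q^{k+1}\|^2$, $t_k\|Q^{k+1}\|^2$, and $t_k\|\Delta z_k\|^2$. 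Plugging the last two into the recursion $x^{k+1}-x^k=-\tfrac{\theta}{t_{k+1}}Q^{k+1}+(1-\tfrac{\alpha}{t_{k+1}})\Delta z_k$ then gives $\sum_k k\|x^k-x^{k-1}\|^2<+\infty$.

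For (iii)--(iv), expand $E_k^\lambda-E_k^{\alpha-1}$ via the three-point identity in direct analogy with \eqref{eq:difference_lyapunovs}--\eqref{eq:difference_lyapunovs_2}, obtaining an affine expression in $\alpha-1-\lambda$ featuring $\|z^{k-1}-z^*\|^2$, a cross-term $t_{k-1}\bra z^{k-1}-z^*,\Delta z_{k-1}+(1-\eta)Q^k\ket$, and the partial sum $I_k:=\sum_{j=1}^{k-1}\bra Q^{j+1},z^{j+1}-z^*\ket$, which converges by (ii). Since both $\lim E_k^\lambda$ and $\lim E_k^{\alpha-1}$ exist (by eventual monotonicity), a discrete analogue of \cite[Lemma~A.2]{apr16}, applied to $q_k:=\tfrac{1}{2}\|z^{k-1}-z^*\|^2+I_k$ and exploiting $\alpha>2$, yields the existence of $\lim q_k$, hence of $\lim\|z^{k-1}-z^*\|$; since $\|x^{k-1}-z^k\|=\|Q^k\|\to 0$, the same limit exists for $\|x^{k-1}-z^*\|$, and combined with demiclosedness of $I-T$ and Opial's lemma, this proves (iii). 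Existence of $\lim E_k^{\alpha-1}$ then forces $\ell:=\lim t_k^2\bigl(\|\Delta z_{k-1}+(1-\eta)Q^k\|^2+\eta(1-\eta)\|Q^k\|^2\bigr)$ to exist; were $\ell>0$, then eventually $t_k\bigl(\|\Delta z_{k-1}+(1-\eta)Q^k\|^2+\eta(1-\eta)\|Q^k\|^2\bigr)\geq \ell/(2t_k)$, contradicting the summabilities of (ii). Hence $\ell=0$, which yields $\|Q^k\|=o(k^{-1})$ and $\|\Delta z_k\|=o(k^{-1})$, and then $\|x^k-x^{k-1}\|=o(k^{-1})$ via the recursion.

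The main obstacle is the coefficient bookkeeping in step (ii): exhibiting a \emph{single} $\lambda\in(\lambda_\star,\alpha-1)$ that makes every right-hand side term in \eqref{eq:nonexp_descent_E_claim} simultaneously nonpositive for large $k$ requires balancing the $\lambda$-independent bracket against the two $(\alpha-1-\lambda)$-weighted terms, and this is what forces the sharp condition $\alpha>2$. A secondary technicality is the discrete analogue of \cite[Lemma~A.2]{apr16}, which also needs $\alpha>2$ to extract $\lim q_k$ from $\lim(q_k+\tfrac{t_k}{\alpha-1}\Delta q_k)$.
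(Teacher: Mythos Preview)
Your plan is the paper's proof, and the Lyapunov machinery is lined up correctly. One slip to fix in step (ii): the third line of \eqref{eq:nonexp_descent_E_claim} carries a $+$ sign, so having $t_k$ exceed the threshold $2\eta\theta+\eta+\theta$ makes that term \emph{positive}, not nonpositive. What you actually need---and what your closing paragraph already anticipates---is to merge lines~2 and~3 into a single $\|Q^k\|^2$ coefficient and then pick $\alpha-1-\lambda<2\eta(1-\eta)(\alpha-2)$ so that the combined leading factor $-\bigl[\eta(1-\eta)(\alpha-2)-\tfrac{\alpha-1-\lambda}{2}\bigr]t_k$ is negative for large $k$. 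The paper organizes (ii) a bit differently, first taking $\lambda=\alpha-1$ (so lines~3--4 vanish) to get the first two summabilities, then $\lambda=\alpha-1-\varepsilon$ only for $\sum t_k\|\Delta z_{k-1}\|^2$; your single-$\lambda$ route is equally valid once the sign is sorted. Two minor alignments with the paper: in (iii) its $I_k$ includes the $\tfrac12\|Q^j\|^2$ correction so that each summand is nonnegative via \eqref{eq:non_monotonicity_of_Q}, though your version works too since $\sum\|Q^j\|^2<\infty$ separately; and in (iv) the existence of $\ell$ is extracted not directly from $\lim E_k^{\alpha-1}$ alone but after first showing $\lim t_{k-1}\Delta q_k$ exists and feeding that into the expanded form of $E_k^{\alpha-1}$.
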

\begin{proof}
As in the proof of Lemma \ref{lem:nonexp_derivative_of_Lyapunov_second_order_discrete}, it is convenient to introduce the additional \rev{sequence} $(z^k)_{k \in \N}$. We recall that the latter is defined by $z^{k+1}:= T(x^k)$ and thus satisfies \eqref{eq:discrete_second_order} with $Q^{k+1}:= Q(x^k):=(I-T)(x^k)=x^k-z^{k+1}$ for all \rev{$k \geq 0$}.\smallskip 
	
\ref{item:nonexp_halpern} \rev{We set $\lambda = \alpha-1$ and \eqref{eq:nonexp_descent_E_claim} becomes, after elementary computations,
\begin{equation}\label{eq:nonexp_descent_E_alpha>2}
\Delta E_k^{\alpha-1} \leq - \rev{\eta}(\alpha - 1)(\alpha - 2) \left(\bra  Q^k, z^k - z^* \ket + \frac{1}{2} \| Q^k\|^2\right)	-\eta(1-\eta)(\alpha-2) \Big( t_k - \frac{\alpha}{2}\Big)\|Q^k\|^2\,,
\end{equation}
for all $k\geq 1$ such that $t_k \geq \alpha-1$, which also implies $t_k \geq \frac{\alpha}{2}$ as $\alpha \geq 2$. Therefore, using \eqref{eq:non_monotonicity_of_Q}, there is $\bar k \geq 1$ large enough such that for all $k \geq \bar k$ we have $\Delta E_k^{\alpha-1} \leq 0$. Therefore, for all $k \geq \bar{k}$, we have $\frac{\xi_{k+1}}{2}\|Q^{k+1}\|^2 \leq E_{k+1}^{\rev{\alpha-1}} \leq E_{\bar{k}}^{\rev{\alpha-1}}$ whence we get the claim recalling the expressions of $\xi_k$ and $t_k$.}
 
Suppose from now on that $\alpha > 2$.\smallskip
	
\ref{item:nonexp_discr_summability} Setting again $\lambda = \alpha - 1$ and embarking from \eqref{eq:nonexp_descent_E_alpha>2}, we have after summing on both sides that
\begin{align*}
\rev{\eta}(\alpha - 1)(\alpha - 2) \sum_{i = \bar{k}}^k\left(\bra  Q^k, z^k - z^* \ket + \frac{1}{2} \| Q^k\|^2\right)& + \eta (1-\eta)(\alpha-2)\sum_{i = \bar{k}}^k (t_i - t_{\bar{k}})\|Q^i\|^2\\
\leq E_{\bar{k}}^{\alpha-1} - E_{k+1}^{\alpha-1} \leq E_{\bar{k}}^{\alpha-1} < +\infty \,.
\end{align*}
Passing to the limit as $k \to +\infty$ we get the first two summability claims. For the last one, we set $\lambda = \alpha - 1 - \varepsilon$, for $\varepsilon > 0$ to be chosen small enough. Then, discarding the first negative term in \eqref{eq:nonexp_descent_E_claim}, we have
\begin{multline}\label{eq:nonexp_descent_E_alpha>2_2}
	\Delta E_k^{\alpha-1-\varepsilon} \leq \bigg[ - \left(\eta (1-\eta)(\alpha-2) - \frac{\varepsilon}{2}\right) t_k  + \frac{\eta(1-\eta)}{2}\left((\alpha-1)^2-1\right)\bigg]\|Q^k\|^2 \\
	- \varepsilon\frac{t_k-\left(\alpha+2+\rev{\eta}(\alpha-1)\right)}{2}\|\Delta z_{k-1}\|^2\,.
	\end{multline}
	Therefore, for $0 < \varepsilon < \eta (1-\eta)(\alpha - 2)$, there exists \rev{$\tilde{k} \geq 1$} large enough such that for all $k \geq \tilde{k}$,
	\begin{equation*}
	\Delta E_k^{\alpha-1-\varepsilon} \leq - \frac{\varepsilon}{2}(t_k-t_{\tilde{k}})\|\Delta z_{k-1}\|^2 \,,
	\end{equation*}
	where we have discarded the first negative term in \eqref{eq:nonexp_descent_E_alpha>2_2}. Summing again we get $\sum_{k \in \N} k\|\Delta z_{k-1}\|^2 < +\infty$. Observe now that since $Q^{k+1}=x^k-z^{k+1}$ by definition, we have 
	\[
	\|\Delta x_{k-1}\|^2  = \|x^k - x^{k-1}\|^2 = \| \Delta z_{k} +  \Delta Q_{k} \|^2 \leq 2\|\Delta z_{k}\|^2 + 4\|Q^{k+1}\|^2 + 4 \|Q^{k}\|^2 .
	\]
	Multiplying by $k$, summing and using the summability claims proved above, we conclude.
	\smallskip
	
\ref{item:nonexp_discr_iters} We argue as in the continuous-time setting and set $\lambda = \alpha - 1 - \varepsilon$, for $\varepsilon > 0$ small enough as before. We will emphasize the dependency of $(v^k)_{k \in \N}$ on $\lambda$ by writing explicitly $(v^k_\lambda)_{k \in \N}$. Let \rev{$k \geq 1$}. We have
\begin{equation*}
\begin{aligned}
    E_k^{\lambda}-E_k^{\alpha-1}& =-\rev{\eta}(\alpha-1-\lambda)  t_{k-1}\left( \bra  Q^k, z^k-z^* \ket  + \frac{1}{2}\|Q^k\|^2\right)\\
			&\quad + \frac{\lambda}{2}(\alpha-1-\lambda)\|z^{k-1}-z^*\|^2 + \underbrace{\frac{1}{2}\|v_\lambda^k\|^2 - \frac{1}{2}\|v_{\alpha-1}^k\|^2}_{(\labterm{lb:thmc_I}{I})}\,.
\end{aligned}
\end{equation*}
Let us now compute \eqref{lb:thmc_I} separately. By the three point identity, we get
	\begin{equation*}
			\eqref{lb:thmc_I}:=\frac{1}{2}\|v_\lambda^k\|^2 - \frac{1}{2}\|v_{\alpha-1}^k\|^2
			= \frac{\lambda^2-(\alpha-1)^2}{2}\|z^{k-1}-z^*\|^2 - t_{k-1}(\alpha-1-\lambda)  \bra    \Delta z_{k-1}  + \rev{(1-\eta)}   Q^k, z^{k-1}-z^*  \ket  \,.
	\end{equation*}
	Therefore,
	\begin{align*}
			E_k^{\lambda}-E_k^{\alpha-1} &=-(\alpha-1-\lambda)\Big[t_{k-1}\bra  Q^k, z^k - z^* - \rev{(1-\eta)}\Delta z_{k-1}\ket + \frac{\rev{\eta}}{2}t_{k-1}\|Q^k\|^2\\
			&\quad +\frac{(\alpha-1)}{2}\|z^{k-1}-z^*\|^2 + t_{k-1}\bra \Delta z_{k-1} , z^{k-1}-z^*  \ket  \Big] \,.
	\end{align*}
	Therefore, this can be written as
	\begin{equation}\label{eq:DeltaEkqk}
    \begin{aligned}
        E_k^{\lambda}-E_k^{\alpha-1} &= -(\alpha - 1-\lambda)\Big[(\alpha-1)q_k +  t_{k-1} \Delta q_{k} - (\alpha-1)I_k \\
		& \quad  -\rev{(1-\eta)} t_{k-1}\bra  Q^k, \Delta z_{k-1} \ket - \frac{t_{k-1}}{2} \left(\|\Delta z_{k-1}\|^2  +\rev{(1-\eta)}\|Q^k\|^2 \right) \Big]\,,
    \end{aligned}
	\end{equation}
	where we have denoted
	\begin{equation*}
        \begin{aligned}
            &q_k:= \frac{1}{2}\|z^{k-1}-z^*\|^2 + I_k\,, \quad \text{and} \quad I_k:= \sum_{j = 1}^{k-1} \left(\bra  Q^j, z^j - z^* \ket +\frac{1}{2}\|Q^j\|^2 \right)\,.
        \end{aligned}
	\end{equation*}	
	Recall that $I_k$ is nonnegative thanks to \eqref{eq:non_monotonicity_of_Q}, hence so is $q_k$. As argued in the proof of \ref{item:nonexp_halpern}, $\Delta E^{\alpha-1}_k \leq 0$ for all $k$ large enough and thus $\frac{c}{2}\|z^k-z^*\|^2 \leq E^{\alpha-1}_k < +\infty$, i.e. $(z^k)_{k \in \N}$ is a bounded sequence. It then follows from the last two summability properties of \ref{item:nonexp_discr_summability} that the terms terms in the second line of \eqref{eq:DeltaEkqk} converge to $0$. \ref{item:nonexp_discr_summability} also gives that $I_k$ admits a limit. Moreover, $E_k^{\lambda}$ and $E_k^{\alpha}$ are nonnegative and decreasing for $k$ large enough as argued above. Therefore, we deduce that
\begin{equation}\label{eq:nonexp_discr_limit_1}
\lim_{k\to +\infty} q_k + \frac{t_{k-1}}{ \alpha-1 }\Delta q_k = \lim_{k\to +\infty} (E_k^{\lambda}-E_k^{\alpha-1}) \in \R\,.
\end{equation}
We are then in position to apply \rev{\cite[Lemma A.4]{bcch25} (see also \cite[Lemma~A.2]{acfr23} for a weaker variant that requires $\sigma \in \N$) to \eqref{eq:nonexp_discr_limit_1} to infer that $(q_k)_{k \in \N}$ admits a limit}. Using again that $I_k$ has a limit by \ref{item:nonexp_discr_summability}, we get that $(\|z^{k}-z^{*}\|)_{k \in \N}$ converges. Since $\|Q^k\| = \|x^{k-1} - T(x^{k-1})\| = \|x^{k-1} - z^{k}\|\to 0$ by \ref{item:nonexp_halpern}, we have $\lim_{k \to +\infty}\|x^{k-1}-z^{*}\|^2 = \lim_{k \to +\infty}\|z^{k}-z^{*}\|^2$ and thus the limit of $\|x^{k}-z^{*}\|$ exists. The weak convergence claim of $(x^k)_{k \in \N}$ then follows from the standard Opial's lemma.\smallskip
	
\ref{item:nonexp_discr_rates} Let us first note that since the limit of $(q^k)_{k \in \N}$ exists (see the proof of \ref{item:nonexp_discr_iters}), \eqref{eq:nonexp_discr_limit_1} yields
	\begin{equation}\label{eq:nonexp_discr_limit_2}
    \begin{aligned}
        \R \ni \lim_{k\to +\infty} t_{k-1}  \Delta q_k  
			=\lim_{k\to +\infty}   t_{k-1} \bigg[ & \bra  Q^k, z^k- z^* \ket + \frac{1}{2} \|Q^k\|^2 \\
			&+ \bra \Delta z_{k-1}, z^k - z^* \ket - \frac{1}{2}\|\Delta z_{k-1}\|^2 \bigg] \,.
    \end{aligned}
	\end{equation}
	Now, from \eqref{eq:nonexp_discrete_lyapunov}, we have
	\begin{equation*}
		\begin{aligned}
			E_k^{\alpha-1} &= \frac{1}{2}\left\| (\alpha-1) (z^{k-1}-z^*) + t_{k-1}\left( \Delta z_{k-1}  +\rev{(1-\eta)}   Q^k\right)\right\|^2\\
			& \quad + \rev{\eta}(\alpha-1) t_{k-1} \left( \bra  Q^k, z^k- z^* \ket  + \frac{1}{2}\|Q^k\|^2\right) + \frac{(1-\eta)\eta  t_{k-1}^2}{2}\|Q^k\|^2\\
			& = \frac{(\alpha-1)^2}{2}\| z^{k-1}-z^*\|^2 +  \frac{t_{k-1}^2}{2}\left\| \Delta z_{k-1} + \rev{(1-\eta)} Q^k\right\|^2 + \frac{1}{2}(1-\eta)\eta t_{k-1}^2\|Q^k\|^2 \\
			& \quad -\frac{1}{2}(\alpha-1)\rev{(1-\eta)} t_{k-1}\|Q^k\|^2 - (\alpha-1)\rev{(1-\eta)} t_{k-1} \bra  Q^k, \Delta z_{k-1} \ket\\
			& \quad + (\alpha-1)t_{k-1} \bigg[\bra  Q^k, z^k- z^* \ket  + \frac{1}{2}\|Q^k\|^2 + \bra  \Delta z_{k-1}, z^{k}-z^* \ket -  \frac{1}{2}\|\Delta z_{k-1}\|^2  \bigg] \,.
		\end{aligned}
	\end{equation*}
	Combining \eqref{eq:nonexp_discr_limit_2} (hence the last line has a limit), \ref{item:nonexp_discr_summability} (hence the second line converges to $0$), and that $\lim_{k \to \infty}\| z^{k-1}-z^*\|^2$ and $\lim_{k \to \infty} E_k^{\alpha-1}$ exist as proved above, we deduce that
	\begin{equation*}
		\lim_{\rev{k}\to +\infty} \  t_{k-1}^2\left(\frac{1}{2}\left\| \Delta z_{k-1} +\rev{(1-\eta)} Q^k\right\|^2 + \frac{1}{2}(1-\eta)\eta  \|Q^k\|^2\right) \in \R\,.
	\end{equation*}
	Observe now that the summability claims of \ref{item:nonexp_discr_summability} already give us that
\begin{equation}
\liminf_{k\to +\infty}  k^2 \|\Delta z_k\|^2 = 0\,, \quad \text{and} \quad \liminf_{k \to +\infty} k^2 \|Q^k\|^2 = 0\,. 
\end{equation}
Thus these are actually limits. To transfer the rates from $z^k$ to $x^k$, we use the relation $x^{k}=Q^{k+1}+z^{k+1}$, \rev{yielding the last two rates in \ref{item:nonexp_discr_rates}. Using Cauchy--Schwarz and boundedness of $(z^k)_{k \in \N}$, we also deduce the first. This concludes the proof.}
\end{proof}

\rev{
\begin{rmk}[The discrete gap function rate]\label{rmk:gap_function_rate}
While the convergence rates for the discrete velocity $\|x^{k} - x^{k-1}\|$ and the fixed-point residual $\|x^k - T(x^k)\|$ are relatively natural in this context, the rate
\begin{equation}
	\left\langle x^{k} - T(x^{k}), T(x^{k}) - z^* \right\rangle + \frac{1}{2} \| x^{k} - T(x^{k}) \|^2 = o(k^{-1}) \quad \text{as} \ k\to +\infty\,,
\end{equation}
may initially appear less intuitive. However, as we will see in Section~\ref{sec:splitting}, it plays a key role in applications to primal-dual-type algorithms, where it serves as a bound for the primal-dual gap, see Example \ref{example:chambolle_pock}. See also Remark \ref{rmk:gap_function_rate_cont} for its continuous-time counterpart.
\end{rmk}%
\begin{rmk}[Non-asymptotic rates]\label{rmk:explicit_rates}
In this work, we primarily focus on asymptotic convergence rates, i.e., the behavior as $k\to+\infty$. Nevertheless, for each admissible parameter choice and each quantity of interest, we can also derive corresponding non-asymptotic rates with explicit constants. Let us also emphasize here the dependency of $E_k^{\lambda}$ on $\eta$ by writing $E_k^{\lambda, \eta}$. Embarking from \eqref{eq:nonexp_descent_E_alpha>2}, we have $E_{k+1}^{\alpha-1, \eta} \leq E_k^{\alpha-1, \eta}$ for all $k \geq 1$ such that $t_k \geq \alpha-1$. If we now only consider $\sigma \geq \alpha-1$, we obtain $E_{k+1}^{\alpha-1, \eta}\leq  E_k^{\alpha-1, \eta}$ for all $k \geq 1$, leading to the following explicit bounds:
\begin{equation}\label{eq:explicit_rates}
	\begin{aligned}
		&\bra x^{k-1}-T(x^{k-1}), T(x^{k-1})-z^*\ket + \frac{1}{2}\| x^{k-1}-T(x^{k-1})\|^2 \leq \frac{E_1^{\alpha - 1, \eta}}{\eta(\alpha-1)t_{k-1}}\,,\\
		& \| x^{k-1}-T(x^{k-1})\|^2\leq \frac{2 E_1^{\alpha-1,\eta}}{\eta(1-\eta) t_{k-1}^2}\,,
	\end{aligned}
\end{equation}
where $E_1^{\alpha-1,\eta}$ is a constant that only depends on the initial conditions, on $z^*$, and the algorithms parameters, including $\eta$---see \eqref{eq:nonexp_discrete_lyapunov} for the precise definition. A particularly illustrative situation occurs when $\alpha=2$, $\eta=\frac{1}{2}$, and $\sigma=1$, which yields $t_0=0$ and thus
\begin{equation}\label{eq:def_gap_function}
	\|x^{k-1}- T(x^{k-1})\|^2 \leq \frac{4\|z^0-z^*\|^2}{t_{k-1}^2} = \frac{4\|T(x^{-1})- x^*\|^2}{(k-1)^2} \quad \text{for all} \ k \geq 2\,.
\end{equation}
A more in-depth investigation of the two bounds in \eqref{eq:explicit_rates}, including the influence of the parameter $\eta$, is left for future work.
\end{rmk}
}

\subsection{Connection to Existing Algorithms}
\rev{Algorithm~\ref{alg:fast_km} offers a flexible and unifying framework that is connected to a broad class of existing methods. In this section, we highlight a few notable special cases and related algorithms to place them more clearly within the literature.}

\subsubsection{Connection to \cite{botkhoa2023}}\label{rmk:connection_to_tikhonov}

As we mentioned in the introduction, Algorithm~\ref{alg:fast_km} is a generalization of the scheme (2.8) in \cite{botkhoa2023}. The latter can be recovered \rev{by setting $\sigma = \alpha + 1$ and $\theta = \tfrac{\alpha}{2}$, which gives, renaming $x^{k}$ as $x^{k+1}$, for all $k \geq 1$}:
	\begin{equation}\label{eq:fast_km_khoa}
		x^{k+1}=\left(1-\frac{\alpha}{2(k+\alpha)}\right)x^{k} + \frac{\alpha}{2(k+\alpha)}T(x^k) + \left(1-\frac{\alpha}{k+\alpha}\right)\big(T(x^k)-T(x^{k-1})\big)\,,
	\end{equation}
	with initial points $x^0, x^1\in H$. This is also a special case of \cite[Algorithm~2.1]{botkhoa2023}, which can be recovered by introducing a step size $s\in (0, 1]$ and considering $T_s:= (1-s)I + sT$, which is now $s$-averaged and has the same fixed points as $T$. Indeed, substituting $T$ by $T_s$ in \eqref{eq:fast_km_khoa} we get for all \rev{$k \geq 1$}:
	\begin{equation}\label{eq:fast_km_khoa_s}
		\begin{aligned}
			x^{k+1}=&\left(1-\frac{s\alpha}{2(k+\alpha)}\right)x^{k} +\frac{(1-s)k}{k+\alpha}(x^k-x^{k-1}) + \frac{s\alpha}{2(k+\alpha)}T(x^k) + \frac{sk}{k+\alpha}\big(T(x^k)-T(x^{k-1})\big)\,.
		\end{aligned}
	\end{equation}
	Conversely, if $T$ is \rev{$\zeta$-averaged}, we shall consider \eqref{eq:fast_km_khoa_s} with \rev{$s \in (0, 1/\zeta]$}, which is equivalent to applying \eqref{eq:fast_km_khoa} with $T$ replaced by $sT + (1  - s)I$.

\subsubsection{Connection to Halpern-type Methods}\label{sec:halpern}

\rev{
Similarly to the continuous-time dynamics discussed in Section \ref{sec:edge_in_theta}, Algorithm \ref{alg:fast_km} reduces to an \emph{anchored} method when we set $\theta=1$. Firstly, when $\alpha =2$, $\sigma = \alpha$, and $x^0=T(x^{-1})$, it is clear that Algorithm~\ref{alg:fast_km} coincides with the \emph{momentum form} of the Optimal Halpern Method (OHM) \cite{lieder21, ss17}, and in turn with Kim's accelerated proximal point algorithm \cite{kim2021}, since the latter two are equivalent, see \cite[Appendix B]{ykr24}. In the edge case of setting $\theta$ equal to $1$, it turns out that Algorithm \ref{alg:fast_km} is indeed equivalent to anchored-type methods for any $\alpha\geq 2$ and $\sigma > 0$, as we will show in the following result.
	\begin{prop}\label{prop:fast_km_and_halpern}
		Let $(x^k)_{k \in \N}$ be the sequence generated by Algorithm \ref{alg:fast_km} with $\alpha \geq 2$, $\theta=1$ and $\sigma > 0$. Then, $(x^k)_{k \in \N}$ satisfies:
		\begin{equation}
			x^{k+1}= \varepsilon_k v + (1-\varepsilon_k)T(x^k)\,, \quad \text{for} \ k \geq 0\,,
		\end{equation}
		where $\varepsilon_k:=\frac{\alpha-1}{k+\sigma}$ and $v:=\frac{t_0}{\alpha-1}(x^0-T(x^{-1})) + T(x^{-1})$.
	\end{prop}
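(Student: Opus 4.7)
The plan is to proceed by induction on $k\geq 0$, showing that the iterates produced by Algorithm~\ref{alg:fast_km} with $\theta=1$ coincide with the Halpern iteration
\begin{equation*}
	x^{k+1} = \varepsilon_k v + (1-\varepsilon_k) T(x^k),
\end{equation*}
with $\varepsilon_k = \tfrac{\alpha-1}{k+\sigma}$ and $v$ as in the statement. The base case $k=0$ is a direct computation: plugging $\theta=1$ into Algorithm~\ref{alg:fast_km} gives
\begin{equation*}
	x^1 = \tfrac{\sigma-1}{\sigma}\,x^0 + \tfrac{\sigma-\alpha+1}{\sigma}\,T(x^0) - \tfrac{\sigma-\alpha}{\sigma}\,T(x^{-1}),
\end{equation*}
and rearranging the terms not involving $T(x^0)$ shows they coincide with $\tfrac{\alpha-1}{\sigma}\bigl(\tfrac{t_0}{\alpha-1}(x^0-T(x^{-1})) + T(x^{-1})\bigr) = \varepsilon_0 v$, which gives the claim.

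For the inductive step, assume $x^k = \varepsilon_{k-1} v + (1-\varepsilon_{k-1})T(x^{k-1})$. The key algebraic observation is that
\begin{equation*}
	1 - \varepsilon_{k-1} = 1 - \tfrac{\alpha-1}{t_k} = \tfrac{t_k-(\alpha-1)}{t_k} = \tfrac{k+\sigma-\alpha}{k-1+\sigma},
\end{equation*}
so solving the induction hypothesis for $T(x^{k-1})$ yields
\begin{equation*}
	T(x^{k-1}) = \tfrac{k-1+\sigma}{k+\sigma-\alpha}\bigl(x^k - \varepsilon_{k-1}v\bigr).
\end{equation*}
Substituting this into the $\theta=1$ update rule
\begin{equation*}
	x^{k+1} = \tfrac{k+\sigma-1}{k+\sigma}\,x^k + \tfrac{k+\sigma-\alpha+1}{k+\sigma}\,T(x^k) - \tfrac{k+\sigma-\alpha}{k+\sigma}\,T(x^{k-1}),
\end{equation*}
the ratio $\tfrac{k+\sigma-\alpha}{k+\sigma}\cdot\tfrac{k-1+\sigma}{k+\sigma-\alpha} = \tfrac{k-1+\sigma}{k+\sigma}$ collapses the $T(x^{k-1})$ term, producing a contribution $-\tfrac{k-1+\sigma}{k+\sigma}x^k + \tfrac{\alpha-1}{k+\sigma}v$.

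The main obstacle---if one could call it that---is simply keeping the arithmetic clean: the coefficients of $x^k$ combine as $\tfrac{k+\sigma-1}{k+\sigma} - \tfrac{k-1+\sigma}{k+\sigma} = 0$, so the $x^k$ contributions telescope away, leaving exactly
\begin{equation*}
	x^{k+1} = \tfrac{\alpha-1}{k+\sigma}\,v + \tfrac{k+\sigma-\alpha+1}{k+\sigma}\,T(x^k) = \varepsilon_k v + (1-\varepsilon_k)T(x^k),
\end{equation*}
closing the induction. No further ingredients (monotonicity of $Q$, Lyapunov estimates, etc.) are required: the result is purely algebraic and reflects the special structure of the parameter choice $\theta=1$, in which the inertial step and the relaxation step recombine into a single convex combination with the fixed anchor $v$, perfectly mirroring the continuous-time reduction of~\eqref{eq:second_order_theta_1} to the Tikhonov flow described in Section~\ref{sec:edge_in_theta}.
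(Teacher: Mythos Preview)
Your induction argument is correct and complete up to one minor technical point: when you solve the induction hypothesis for $T(x^{k-1})$ you divide by $1-\varepsilon_{k-1}=\tfrac{k+\sigma-\alpha}{k-1+\sigma}$, which vanishes if $k+\sigma=\alpha$ happens to hold for some integer $k\geq 1$ (e.g.\ $\alpha=2$, $\sigma=1$, $k=1$). This case is harmless---the coefficient of $T(x^{k-1})$ in the update is then zero, and the hypothesis gives $x^k=v$, so $x^{k+1}=\tfrac{\alpha-1}{\alpha}v+\tfrac{1}{\alpha}T(x^k)$ directly---but strictly speaking you should mention it.

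Apart from this, your route is genuinely different from the paper's. The paper does not induct: it multiplies the update by $t_{k+1}=k+\sigma$, recognizes the result as a three-term telescoping identity
\[
\bigl(t_{k+1}x^{k+1}-t_kx^k\bigr)-\bigl(t_{k+1}T(x^k)-t_kT(x^{k-1})\bigr)+(\alpha-1)\bigl(T(x^k)-T(x^{k-1})\bigr)=0,
\]
sums from $0$ to $k$, and reads off the Halpern form in one stroke. That approach is slightly cleaner in that no division ever occurs, and it makes the anchor $v$ emerge naturally from the boundary terms of the telescoping sum (mirroring the integration step in the continuous-time derivation of~\eqref{eq:Tychonov_flow}). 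Your induction, on the other hand, is more hands-on and makes the cancellation of the $x^k$ coefficient explicit; it is the natural thing to try if one already suspects the Halpern form and just wants to verify it.
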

    \begin{proof}
    Multiplying the update rule of Algorithm~\ref{alg:fast_km} by $t_{k+1}:= k+\sigma$, we get
	\begin{equation*}
		\begin{aligned}
			&(k+\sigma)(x^{k+1}-x^k) = T(x^k)-x^k + (k+\sigma -\alpha)(T(x^k)-T(x^{k-1}))\\
			\iff \ & t_{k+1} (x^{k+1} - x^k) + x^k\\
			& \quad  - \Big( T(x^k) +  (k+\sigma -1)(T(x^{k+1})-T(x^k))\Big) + (\alpha-1) (T(x^k) - T(x^{k-1})) = 0\\
			\iff \ & \big( t_{k+1}x^{k+1}-t_{k}x^k\big) - \big( t_{k+1}T(x^k)-t_k T(x^{k-1})\big) + (\alpha-1) \big(T(x^k) - T(x^{k-1})\big) = 0\,.
		\end{aligned}
	\end{equation*}
Summing the above equation from $0$ to $k$, dividing the result by $t_{k+1}$ and rearranging terms we get the claim.
\end{proof}
}
\noindent Note that in the edge case $\theta=1$ we cannot expect to obtain little-$o$ rates for fixed-point residuals, and the convergence of the iterates is in fact in the strong topology, but is established with completely different techniques than those we have applied in Section \ref{sec:discrete_lyapunov_analysis}: \rev{The former rely on the Gr\"onwall's lemma (see, e.g., \cite{bc24}), the latter on Opial's lemma}.

\rev{
\subsubsection{Connection to Inertial KM and Nesterov's Method} The Fast-KM method is not the first attempt to introduce a Nesterov-type acceleration for iterative methods governed by nonexpansive operators. The earliest such attempt dates back to 2001, with the work of Attouch and Alvarez in \cite{aa01}. Starting from an implicit discretization of the celebrated Heavy Ball with Friction (HBF) dynamical system governed by a maximal monotone operator, they arrived at the following numerical algorithm, for a firmly nonexpansive operator $T$:
\begin{equation}\label{eq:inertial_km}
	x^{k+1} = T(x^k) + \alpha_k \big(T(x^k) - T(x^{k-1})\big)\,, \quad \text{for all} \ k \geq 0\,,
\end{equation}
with $\alpha_k \in (0, 1)$. Note that if $\alpha_k := 1 - \frac{\alpha}{k + \sigma}$ with $\sigma > 0$, $\alpha \geq 3$, and $T := I - \tau \nabla f$ for a $L$-smooth function $f \colon H \to \R$ and $\tau \in (0, \frac{1}{L}]$, then \eqref{eq:inertial_km} coincides with the \emph{Ravine} formulation of the celebrated Nesterov Accelerated Gradient method \cite{Nesterov1983AMF, af22}, which is known to yield the optimal convergence rate in convex optimization. While \eqref{eq:inertial_km} can still provide weakly converging sequences for general (firmly) nonexpansive operators---specifically if $\alpha_k \leq \bar \alpha < \frac{1}{3}$, cf.~\cite[Proposition 2.1]{aa01}---whether it achieves fast convergence rates in this general setting remains unclear. A first negative answer was provided only recently in \cite{mfp24}, in a broader context involving iteration-dependent operators and more general parameter choices. Specifically, in the context of \eqref{eq:inertial_km}, \cite[Theorem 4(ii)]{mfp24} shows that if $\alpha_k$ satisfies a specific growth condition---implying, in particular, that $\sup_{k \in \N} \alpha_k < 1$---then one only obtains
\begin{equation}\label{eq:rate_inertial_km}
	\min_{0 \leq k \leq n} \ \|x^k - T(x^k)\| \leq M\frac{\dist (x^0, \Fix T)}{\sqrt{n}}\,, \quad \text{for all} \ n \geq 1\,,
\end{equation}
for some constant $M > 0$. This result is \emph{ergodic} and of suboptimal order $\cO(n^{-\frac{1}{2}})$.

Comparing \eqref{eq:inertial_km} with Algorithm~\ref{alg:fast_km}, we see that \eqref{eq:inertial_km} could, in principle, be recovered from Algorithm~\ref{alg:fast_km} by replacing the vanishing regularization term $\frac{\theta}{k+\sigma}$ with $1$, and by capping $\alpha_k$ at a value strictly smaller than $1$. In the continuous-time model \eqref{eq:second_order_dynamic_general}, this would correspond to taking $b \equiv 1$ and a \emph{non}-vanishing damping function, respectively. However, doing so would worsen the rate to \eqref{eq:rate_inertial_km}. Thus, we learn that letting $\alpha_k \to 1$ and using a vanishing regularization parameter $\frac{\theta}{k+\sigma}$ are both essential for achieving fast convergence rates for general nonexpansive operators.

\subsubsection{Connection to Tran-Dinh's Method} In \cite{TranDinh24}, Tran-Dinh proposed a method that is able to achieve a fast little-$o$ rate for solving root-finding problems of the form:
\begin{equation}\label{eq:root_finding_trandinh}
	\text{Find} \ x \in H \ \text{such that:} \quad 0 = G(x)\,,
\end{equation}  
where $G\colon H\to H$ is $\frac{1}{L}$-cocoercive, for $L>0$. The method in \cite[Theorem 3]{TranDinh24} is formulated as
\begin{equation}\label{eq:trandinh_method}
	x^{k+1} = x^k + \bar\theta_k (x^k - x^{k-1}) - \bar \eta_k \big( G(x^k) - \bar \gamma_k G(x^{k-1})\big) \,, \quad \text{for} \ k \geq 0\,,
\end{equation}
with parameter sequences defined, for $\bar \gamma \in (0, 1)$, $\omega > 2$, and all $k \geq 0$, as 
\begin{equation}\label{eq:parameters_trandinh}
	\bar \theta_k:= \frac{k + 1}{k + 2\omega +2}\,, \quad \bar\eta_k := \frac{\bar \gamma}{L}\frac{k +\omega + 1}{k+2\omega + 2}\,, \quad \text{and} \quad \bar \gamma_k := \frac{\bar \gamma}{L}\frac{\bar \theta_k}{\bar \eta_k}\,.
\end{equation}
It turns out that \eqref{eq:trandinh_method} is a special case of Algorithm \ref{alg:fast_km}. Indeed, considering the operator $T:= I - \frac{\bar \gamma}{L}G$, solving \eqref{eq:root_finding_trandinh} is equivalent to finding a fixed point of $T$, $T$ being $\frac{\bar \gamma}{2}$-averaged. If we now apply Algorithm \ref{alg:fast_km} to find a fixed point of $T$ setting $\sigma := 2\omega + 2$, $\alpha := 2\omega + 1$ and $\theta:=\omega$, we obtain exactly \eqref{eq:trandinh_method} with the parameter choices \eqref{eq:parameters_trandinh}. Interestingly, Theorem \ref{thm:nonexp_second_order_discrete} allows us to further extend the parameter choices proposed in \cite{TranDinh24} to $\omega \geq \frac{1}{2}$, instead of $\omega \geq 2$, and $\bar \gamma \in (0, 2]$ instead of $\bar \gamma \in (0, 1)$, as well as to establish a last-iterate rate on the gap function.
}

\section{Application to Splitting Methods with Preconditioners}\label{sec:splitting}
The goal here is apply our results to operator splitting methods for solving monotone inclusion problems, i.e. finding a zero of one or sum of maximally monotone operators. We follow the algorithmic framework introduced in \cite{bredies2021degenerate}. For a maximal monotone operator $A\colon H\to 2^{H}$, we introduce a \textit{preconditioner} $M$, i.e., a self-adjoint positive semidefinite bounded linear operator $M\colon H\to H$, \rev{which induces a Hilbertian (semi-)inner product $\bra  u, v \ket _M:= \bra  u, Mv \ket$ and associated (semi) norm $\|u\|_M :=  \sqrt{\bra  u, u \ket _M}$ for all $u,v \in H$.} Given $A$ and $M$, we consider the resolvent:
\begin{equation}\label{eq:preconditioned_resolvent}
	J_{M^{-1}A} := (I+M^{-1}A)^{-1} = (M+A)^{-1}M\,.
\end{equation}
It is worth noting that without further hypothesis, $J_{M^{-1}A}$ is not necessarily single-valued nor defined everywhere on $H$. On the other hand, this representation is helpful in practice, and evaluating $J_{M^{-1}A}$ can be facilitated by the right choice of $M$. As first remarked by He and Yuan in \cite{HeYuan12}, this is in fact the case for primal-dual-type methods:
\rev{\begin{example}[PDHG]\label{example:chambolle_pock}
Let $f\colon H\to \R\cup \{+\infty\}$ and $g\colon K\to \R\cup\{+\infty\}$ be proper, convex and lower semicontinuous functions on the Hilbert spaces $H$ and $K$, and $L\colon H\to K$ a linear bounded map. The Primal-Dual-Hybrid-Gradient (PDHG) method by Chambolle and Pock \cite{cp11} is a classical scheme to find a solution to the following saddle point problem
\begin{equation}\label{eq:lagrangian}
	\min_{x \in H} \max_{y \in K} \ \cL(x, y):= f(x) + \langle Lx, y\rangle - g^*(y)\,,
\end{equation}
where $g^*$ is the Legendre--Fenchel conjugate of $g$. This has enormous applications in applied mathematics, see, e.g., \cite{cp16_book}. The PDHG iteration takes the form
\begin{equation}\label{eq:chambolle_pock}
	\left\{
	\begin{aligned}
		&x^{k+1}=\prox_{\tau_1 f_1}(x^k - \tau_1 L^Ty^k)\,,\\
		&y^{k+1}=\prox_{\tau_2 f_2^{*}}(y^k + \tau_2 L(2x^{k+1}-x^k))\,.
	\end{aligned}\right.
\end{equation}
Under a mild qualification condition, the generated sequence $u^k:=(x^{k}, y^{k})$ converges weakly to a saddle point of $\cL$ as soon as $\tau_1\tau_2\|L\|^2 < 1$, see \cite{HeYuan12, bredies2021degenerate}. Following He and Yuan \cite{HeYuan12}, the method can be understood a fixed-point iterations with respect to $J_{M^{-1}A}$ with
\begin{equation}\label{eq:operators_chambolle_pock_2}
	A:= \begin{bmatrix}
		\partial f & L^T\\ 
		-L & \partial g^*
	\end{bmatrix}\,, \quad \text{and} \quad M:= \begin{bmatrix}
		\frac{1}{\tau_1}I &  -L^T\\
		-L & \frac{1}{\tau_2}I
	\end{bmatrix}\,.
\end{equation}
The operator $J_{M^{-1}A}$ can in fact be easily computed due to the block-lower triangular structure of $A+M$ (recall the formula \eqref{eq:preconditioned_resolvent} or see \cite[Section 3]{bredies2021degenerate} for the detailed computations), and the resulting fixed-point iteration coincides with \eqref{eq:chambolle_pock}.

The step-size condition $\tau_1\tau_2\|L\|^2< 1$ is equivalent to the positive definiteness of $M$ \cite{bredies2021degenerate}, which makes $M^{-1}A$ a maximal monotone operator on the Hilbert space $H$ endowed with the norm induced by $M$. Being an instance of the (preconditioned) proximal point algorithm \cite{bredies2021degenerate}, the rate on the fixed-point residual is
\begin{equation}
	\|u^{k} - J_{M^{-1}A}(u^k)\|_M = o(k^{-\frac{1}{2}}) \quad \text{as} \ k \to +\infty\,.
\end{equation}
A second classical convergence measure for \eqref{eq:chambolle_pock} is the \emph{primal-dual gap}\footnote{\rev{More precisely, the primal-dual gap should be defined as $\sup_{u^* \in \zer A} \cG_{u^*}(x, y)$. However, as $\zer A$ might be unbounded, this quantity is not easily controllable, leading to notions such as \emph{restricted primal-dual gap} \cite{cp11} or to further assumptions such as boundedness of $\zer A$. In this example, we keep the presentation simple and avoid taking the $\sup$.}}, which we define as
\begin{equation}\label{eq:primal-dual_gap}
	\mathcal{G}_{u^*}(x, y):=\cL(x, y^*)- \cL(x^*, y)\,, \quad \text{for all} \ (x, y) \in H\times K\,.
\end{equation}
The quantity that in Remark~\ref{rmk:gap_function_rate} we referred to as the gap function \eqref{eq:def_gap_function} is indeed an upper bound for \eqref{eq:primal-dual_gap}. Specifically, using convexity and the definition of $A$ in \eqref{eq:operators_chambolle_pock_2}, we get
\begin{equation}\label{eq:primal_dual_bound}
	\begin{aligned}
		\mathcal{G}_{u^*}(x^{k+1}, y^{k+1}) \leq 
		&\bra u^k - J_{M^{-1}A}(u^k), J_{M^{-1}A}(u^k)-u^*\ket_M\\
	 \leq & \bra u^k - J_{M^{-1}A}(u^k), J_{M^{-1}A}(u^k)-u^*\ket_M + \frac{1}{2}\|u^k - J_{M^{-1}A}(u^k)\|^2_M\,.
	\end{aligned}
\end{equation}
Note that $u^k-J_{M^{-1}A}(u^k) = u^k - u^{k+1} \in M^{-1}A(u^{k+1})$, so the first inner product \eqref{eq:primal_dual_bound} is non-negative in this case. Secondly, while the right-hand side of \eqref{eq:primal_dual_bound} vanishes at the rate $o(k^{-\frac{1}{2}})$, it is also summable, leading, using convexity of $\cG_{u^*}$ to the classical $\cO(k^{-1})$ ergodic rate for \eqref{eq:primal-dual_gap}, see \cite{cp16_book}.
\end{example}
}
By allowing the preconditioner $M$ to be \textit{degenerate}, i.e., with $\ker M\neq \emptyset$, Bredies and Sun in \cite{Bredies2017APP, BrediesDRS}, and later \cite{bredies2021degenerate} recognized that the class of methods representable as iterations with respect to degenerate preconditioners can be greatly broadened to include methods such as the Douglas--Rachford Splitting (DRS) \cite{drs_mercier_lions} and its extensions \cite{bredies2022graph}.

\subsection{Preliminaries on Degenerate Proximal Point}\label{sec:preliminaries_degenerate}
The notion of monotonicity and cocoercivity can be extended to a \textit{preconditioned} setting, even with a degenerate preconditioner $M\colon H\to H$. For a maximal monotone operator $A$ and a preconditioner $M$ we shall consider the operator $M^{-1}A$ as a composition of the multi-valued operators $M^{-1}$ (pre-image) and $A$, defined as: \rev{$v \in M^{-1}A(u)$ if and only if $Mv \in A(u)$}. It is easy to check that $M^{-1}A$ is monotone with respect to the semi-inner product induced by $M$. We then say that $M^{-1}A$ is $M$-\textit{monotone}. Likewise, its resolvent $J_{M^{-1}A}$ is $M$-\textit{cocoercive} in the sense that
\begin{equation}\label{eq:M_cocoercivity}
	\bra  v-v', u-u' \ket _M \geq \|v-v'\|_M^2\,, \quad \text{for all} \ (u, v), \ (u', v')\in \gra J_{M^{-1}A}\,.
\end{equation}
Due to the degeneracy of the preconditioner, the operator $J_{M^{-1}A}$ is not necessarily single-valued and could even have empty domain. To avoid such pathological cases, we suppose $\dom J_{M^{-1}A} = H$. A further hypothesis we shall require is that the operator
\begin{equation}\label{eq:admissble_preconditioner}
	(M+A)^{-1} \ \text{is Lipschitz over} \ \Image M\,,
\end{equation}
which makes $J_{M^{-1}A}$ a well-defined Lipschitz operator on $H$, since indeed $J_{M^{-1}A}=(M+A)^{-1}M$. \rev{Additionally, it can be shown to satisfy the fundamental property:
\begin{equation}\label{eq:lipschitz_bound_to_M}
	\|J_{M^{-1}A}(u) - J_{M^{-1}A}(u')\|^2 \leq L \|u-u'\|_M^2\,, \quad \text{for all} \ u, u'\in H\,.
\end{equation}}
\rev{We refer to fixed-point iterations with respect to $J_{M^{-1}A}$ as \emph{preconditioned proximal point algorithm}, which for $\theta \in (0, 2)$ and $u^0 \in H$, reads as
\begin{equation}\label{eq:degenerate_ppp}
		u^{k+1}= u^k + \theta \big(J_{M^{-1}A}(u^k) - u^k\big)\,, \quad \text{for all} \ k \in \N\,.
	\end{equation}
	The convergence guarantees of \eqref{eq:degenerate_ppp} are given in \cite[Theorem 2.9]{bredies2021degenerate}. Specifically, if \eqref{eq:admissble_preconditioner} holds, then $(u^k)_{k \in \N}$ and $(J_{M^{-1}A}(u^k))_{k \in \N}$ converge weakly to the same zero of $A$ with the rate:
	\begin{equation}
		\|u^k - J_{M^{-1}A}(u^k)\|_M = o(k^{-\frac{1}{2}}) \quad \text{as}\ k \to +\infty\,.
\end{equation}}
An appealing property of positive semidefinite preconditioners is that evaluating $J_{M^{-1}A}$ automatically produces a quotient space operation. Suppose that $M$ has closed range and consider a factorization of $M$ as $M=CC^*$, with $C\colon D\to H$ one-to-one from a Hilbert space $D$ ($D\cong \Image M$) to $H$. We say that $CC^*$ is an \textit{onto decomposition} of $M$. In this case, one can prove that $C^*$ is onto \cite[Proposition 2.3]{bredies2021degenerate} and the operator $C^*\rhd A := (C^*A^{-1}C)^{-1}$ is maximal monotone \cite[Theorem 2.13]{bredies2021degenerate}. The resolvents of $M^{-1}A$ and of $C^*\rhd A$ are tightly connected: If $M$ has closed range and $M=CC^*$ is onto, then 
\begin{equation}\label{eq:resolvent_push_forward}
	C^* J_{M^{-1}A}( u) = J_{C^*\rhd A} \left(C^* u \right)\,, \quad \text{for all} \ u \in H\,.
\end{equation}
This is particularly useful as it reveals that the evaluation of $J_{M^{-1}A}$ is somewhat equivalent to evaluating $J_{C^*\rhd A}$, which is a resolvent of a maximal monotone operator and is defined on $D$, which is isomorphic to $\Image M$; see \cite[Theorem~3.3.3]{chenchene2023splitting} for a proof. This implies that \rev{the sequence $w^k:=C^*u^k \in D$ is a classical proximal point method with respect to the maximal monotone operator $C^*\rhd A$:
\begin{equation}\label{eq:reduced_degenerate_ppp}
	w^{k+1} = w^k + \theta \big(J_{C^*\rhd A}(w^k)-w^k\big) \quad \text{as}\ k \to +\infty\,.
\end{equation}}%
However, although $J_{M^{-1}A}(u^k)=(M+A)^{-1}(Cw^k)$ from \eqref{eq:preconditioned_resolvent} and $M=CC^*$, the weak convergence of $(u^k)_{k \in \N}$ and $(J_{M^{-1}A}(u^k))_{k \in \N}$, in infinite-dimensional spaces, does not follow directly from the that of $(w^k)_{k \in \N}$. For clarity, we present a prototypical example:

\begin{example}[Douglas--Rachford \cite{BrediesDRS, bredies2021degenerate}]\label{example:drs} Let $A_1, A_2\colon H\to 2^{H}$ be two maximal monotone operators on $H$ and assume that $\zer (A_1+A_2)\neq \emptyset$. Following \cite{bredies2021degenerate}, the DRS method for finding a zero of $A_1+A_2$ can be understood as iterating $J_{M^{-1}A}$ with the operators:
\begin{equation}\label{eq:operators_drs_2}
	A:= \begin{bmatrix}
		\phantom{-}A_1 & \phantom{-}I  & - I\\ 
		-I & \phantom{-}A_2 & \phantom{-}I \\
		\phantom{-}I & -I & \phantom{-}0
	\end{bmatrix}\,, \quad \text{and} \quad M:= \begin{bmatrix}
	\phantom{-}I &  -I & \phantom{-}I\\
	-I & \phantom{-}I & -I \\
	\phantom{-}I & -I & \phantom{-}I
\end{bmatrix}= \begin{bmatrix}
I \\ -I\\ I 
\end{bmatrix}\underbrace{\begin{bmatrix}
I & -I & I
\end{bmatrix}}_{:=C^*}\,.
\end{equation}
Indeed, since $M+A$ is block lower-triangular, evaluating $J_{M^{-1}A}=(M+A)^{-1}M$ only requires accessing the resolvents of $A_1$ and $A_2$, we refer to \cite{bredies2021degenerate} for the detailed computation. If $u^{k}:=(x_1^k, x_2^k,  v^k)\in H^3$ is such that $u^{k+1}:=J_{M^{-1}A}(u^k)$ the DRS method can be obtained via the substitution $w^k:=x_1^k - x_2^k + v^k$, which yields for all $k \in \N$:
\begin{equation}\label{eq:douglas_rachford}
	\left\{
	\begin{aligned}
		& x_1^{k+1} = J_{A_1}(w^k)\,,\\
		& x_2^{k+1}= J_{A_2}(2x_1^{k+1}-w^k)\,,\\
		& w^{k+1}= w^k + x_2^{k+1}-x_1^{k+1} \,.
	\end{aligned}\right. \quad \rev{ \text{and} \quad J_{M^{-1}A}(u^k):=\begin{pmatrix}
	x_1^{k+1}\\
	x_2^{k+1}\\
	w^k + 2(x^{k+1}_2-x^{k+1}_1)
\end{pmatrix}\,.}
\end{equation}
This corresponds to taking $w^k = C^* u^k$, which defines a fixed-point iteration with respect \rev{$J_{C^*\rhd A}$, the operator $C^*\rhd A$ now being maximal monotone. It follows from \cite[Theorem 2.9]{bredies2021degenerate} that $(w^k)_{k \in \N}$ weakly converges to some $w^*\in \zer C^*\rhd A$, with the rate
\begin{equation}\label{eq:rate_douglas_rachford}
	\|x_2^{k+1}-x_1^{k+1}\| = \|w^{k} - J_{C^*\rhd A}(w^k)\| = \|u^{k} - J_{M^{-1}A}(u^k)\|_M = o(k^{-\frac{1}{2}})\,, \quad \text{as} \ k \to+\infty\,. 
\end{equation}}%
Additionally, $(J_{M^{-1}A}(u^k))_{k \in \N}$ weakly converges to a zero of $A$, yielding that the \emph{solution estimates} $(x_1^{k+1})_{k \in \N}$ and $(x_2^{k+1})_{k \in \N}$ (a.k.a.~shadow sequences) weakly converge to the same zero of $A_1+A_2$.

Note that the weak convergence of the shadow sequences remained an open problem for almost $30$ years until Svaiter proposed a convergence proof in \cite{SVAITER2011}. The analysis of iterations associated to \eqref{eq:preconditioned_resolvent} with a degenerate preconditioner allows to recover this result from general principles applicable to a broad range of algorithms; see \cite{bredies2021degenerate}.
\end{example}

\rev{
\begin{example}[Graph Douglas--Rachford \cite{bredies2022graph, chenchene2023splitting, acgn25}]\label{example:graph_drs}
The graph-Douglas--Rachford Splitting method (Graph-DRS) is an extension of the Douglas--Rachford method to solve instances of:
\begin{equation}\label{eq:Nop}
	\text{Find} \ x \in H: \quad 0 \in A_1(x) + \dots + A_N(x) \,,
\end{equation}
with $A_1,\dots, A_N\colon H\to 2^H$ maximal monotone operators. The method only requires picking two real matrices $Z, \hat{Z}\in \R^{N\times (N-1)}$ and a step size $\tau>0$, with $\ker Z^T = \Span \{\mathbf{1}\} \subset \ker \hat{Z}^T$ where $\mathbf{1}:=(1,\dots, 1)^T \in \R^N$. Then, denoting by $L:=ZZ^T$, $\hat{L}:=\hat{Z}\hat{Z}^T$ and $d:=\diag ({L}+\hat{{L}})$, the update rule reads as:
\begin{equation}\label{eq:general_drs}
	\left\{
	\begin{aligned}
		& x_i^{k+1} =J_{\frac{\tau}{d_i}A_i}\bigg(-\frac{2}{d_i}\sum_{h=1}^{i-1}\left({L}_{hi}+\hat{{L}}_{hi}\right)x_h^{k+1} + \frac{1}{d_i}\sum_{j=1}^{N-1}Z_{ij}w_j^{k}\bigg)\,, \quad \text{for all} \ i \in \llbracket 1, N\rrbracket\,,\\
		& w_j^{k+1} = w_j^{k} - \sum_{i=1}^{N} Z_{ij}x_i^{k+1}\,, \quad \text{for all} \ j \in \llbracket1, N-1\rrbracket\,.
	\end{aligned}\right.
\end{equation}
This method generalizes a number of splitting methods and can in fact be shown to encompass \emph{all} averaged frugal resolvent splitting methods with minimal lifting in the sense of Ryu, see \cite{acgn25} and the references therein. Similarly to Example \ref{example:drs}, it can be understood in the framework of degenerate proximal point methods with respect to $J_{\bC^*\rhd \bA}$ for a maximal monotone operator $\bA\colon \bH\to 2^{\bH}$, and a degenerate preconditioner $\bM := \bC\bC^*$, this time with $\bC^*\colon \bH \to H^{N-1}$ and $\bH:=H^{2N-1}$, cf.~\cite{bredies2022graph} for their precise (but technical) definition. Moreover, $\bx^{k+1}:=(x_1^{k+1}, \dots, x_N^{k+1})$ are the first $N$ components of $(\bM+\bA)^{-1}(\bC \bw^k)$, where $\bw^k:=(w_1^{k}, \dots, w_{N-1}^k)$.

While in \eqref{eq:rate_douglas_rachford} the fixed-point residual coincides with the difference of the two solution estimates, for \eqref{eq:general_drs} it controls a natural dispersion functional between the components of $\bx^{k+1}$, i.e., their \emph{variance}:
\begin{equation}\label{eq:def_variance}	
\Var(\bx^{k+1}):= \frac{1}{N}\sum_{i=1}^{N}\|x_i^{k+1}-\bar{x}^{k+1}\|^2 \leq \frac{1}{\lambda_1N} \|\bw^k - J_{\bC^*\rhd \bA}(\bw^k)\|^2 = o(k^{-1})\,, \quad \text{as} \ k\to+\infty\,.
\end{equation}
where $\bar{x}^{k+1} := \frac{1}{N}\sum_{i=1}^{N} x_i^{k+1}$, and $\lambda_1$ is the smallest nonzero eigenvalue of $L$, see \cite[Proposition~3.8]{bredies2022graph} or \cite[Lemma~5.2.1]{chenchene2023splitting} for a proof of the inequality in \eqref{eq:def_variance}. Additionally, thanks to \cite[Theorem 2.9]{bredies2021degenerate}, all the solution estimates $(x_i^{k+1})_{k \in \N}$ for $i \in \llbracket 1,  N\rrbracket$ converge weakly to the same solution to \eqref{eq:Nop}.
\end{example}}

\subsection{Fast Degenerate Preconditioned Proximal Point Algorithms}

\rev{Once the preconditioned resolvent $J_{M^{-1}A}$ is formed, e.g., as in Examples \ref{example:chambolle_pock}, \ref{example:drs}, and \ref{example:graph_drs}, we can employ it in conjunction to Algorithm \ref{alg:fast_km} to study accelerated variants of degenerate preconditioned proximal point iterations \eqref{eq:degenerate_ppp}. Denoting by $\theta_k := \frac{\theta}{k+\sigma}$ and $\alpha_k := 1-\frac{\alpha}{k+\sigma}$, we can consider, for $s\in (0, 2]$, the $M$-averaged operator $T:= I + s(J_{M^{-1}A}-I)$ and apply Algorithm \ref{alg:fast_km}, yielding, for all $k\geq 0$,
\begin{equation}\label{eq:fast_degenerate_ppp}
	u^{k+1} = u^k + \alpha_k(1-s)(u^k-u^{k-1}) + s\theta_k J_{M^{-1}A}(u^k) + s\alpha_k\Big(J_{M^{-1}A}(u^k)- J_{M^{-1}A}(u^{k-1})\Big)\,.
\end{equation}
Similarly to the non-accelerated setting, if $M$ has closed range, using \eqref{eq:resolvent_push_forward}, the sequence $w^k:=C^*u^k \in D$ can be easily seen to define an instance of Algorithm \ref{alg:fast_km} with respect to $\tilde T:= I+ s(J_{C^*\rhd A}-I)$, the latter being $\frac{s}{2}$-averaged in the classical sense since $C^*\rhd A$ is maximal monotone. Therefore, convergence rates and weak convergence of $(w^k)_{k \in \N}$ follow immediately from Theorem \ref{thm:nonexp_second_order_discrete}. However, using this argument, the weak convergence of $(J_{M^{-1}A}(u^k))_{k \in \N}$ does not follow from that of $(w^k)_{k \in \N}$ since the convergence of $(w^{k})_{k \in \N}$ is in the weak topology and $J_{M^{-1}A}(u^k)=(M+A)^{-1}(Cw^k)$ is typically nonlinear.} The following lemma captures the argument that allows us to close this important gap \rev{and extend the analysis to general preconditioners, with non-necessarily closed range}. The mechanism underlying this result can be traced back to \cite{bredies2021degenerate}, \rev{although it was not explicitly established there.}

\begin{lem}[Opial in the Degenerate Setting]\label{lem:opial_degenerate}
	Let $A$ be a maximal monotone operator with $\zer A \neq \emptyset$, and $M$ a self-adjoint positive semidefinite preconditioner such that $(M+A)^{-1}$ is Lipschitz over $\Image M$ and $J_{M^{-1}A}$ has full domain. Let $(u^k)_{k \in \N}$ be a sequence, and suppose that:
	\begin{enumerate}[label=(\roman*)]
		\item \label{item:opial_sequence} For all $u^* \in \Fix J_{M^{-1}A}$ the limit  $\lim_{k\to +\infty} \ \|u^k-u^*\|_M$ exists;
		\item \label{item:asymptotic_regularity} and $\lim_{k\to +\infty}  \| u^k - J_{M^{-1}A}(u^k)\|_M = 0$.
	\end{enumerate}
	Then $J_{M^{-1}A}(u^k) \rightharpoonup u^* \in \zer A$ as $k \to +\infty$. If, additionally, $M$ has {closed range} and $M=CC^*$ is onto, then for $w^k:=C^*u^k$:
	\begin{equation*}
        w^k \rightharpoonup w^*\in \zer C^*\rhd A\,, \quad \text{and} \quad (A+M)^{-1}(Cw^k)=J_{M^{-1}A}(u^k)\rightharpoonup u^*\in \zer A\,, \quad \text{as} \ k \rightarrow +\infty\,.
	\end{equation*}
 \end{lem}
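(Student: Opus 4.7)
The plan is to reinterpret the classical Opial lemma in the $M$-semi-inner product space induced by the preconditioner and then upgrade the resulting ``uniqueness modulo $\ker M$'' to genuine uniqueness by exploiting that $(M+A)^{-1}$ is single-valued on $\Image M$. Throughout I write $p^k := J_{M^{-1}A}(u^k)$ and recall that $\Fix J_{M^{-1}A} = \zer A$. First, picking any $u^* \in \zer A$ and using the key inequality \eqref{eq:lipschitz_bound_to_M} with $u'=u^*$, one obtains $\|p^k-u^*\|^2\le L\|u^k-u^*\|_M^2$, which is bounded thanks to \ref{item:opial_sequence}; hence $(p^k)_{k\in\N}$ admits weak cluster points in the norm topology. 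If $p^{k_j}\rightharpoonup \bar u$, then since $\|M(u^k-p^k)\|\le \|M^{1/2}\|\,\|u^k-p^k\|_M\to 0$ by \ref{item:asymptotic_regularity} and $M(u^k-p^k)\in A(p^k)$ by definition of the resolvent, the sequential weak-strong closedness of $\gra A$ yields $0\in A(\bar u)$, so $\bar u \in \zer A$.

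Next, I would prove uniqueness of the weak cluster point, which I expect to be \emph{the main obstacle} due to the degeneracy of $M$. Combining \ref{item:opial_sequence} and \ref{item:asymptotic_regularity}, the limit $\lim_k\|p^k-u^*\|_M=\lim_k\|u^k-u^*\|_M$ exists for every $u^*\in \zer A$. A standard Opial-type computation carried out in the $M$-semi-inner product---using that $p^{k_j}\rightharpoonup \bar u_i$ implies $\bra Mp^{k_j},\cdot\ket \to \bra M\bar u_i,\cdot\ket$---shows that any two weak cluster points $\bar u_1,\bar u_2$ of $(p^k)_{k \in \N}$ satisfy $\|\bar u_1-\bar u_2\|_M=0$, i.e., $M\bar u_1=M\bar u_2$. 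This only delivers uniqueness modulo $\ker M$, which is nontrivial in the degenerate setting. The crucial upgrade now uses that $\bar u_1,\bar u_2\in \zer A=\Fix J_{M^{-1}A}$, so both satisfy $\bar u_i=(M+A)^{-1}(M\bar u_i)$; since $M\bar u_1=M\bar u_2\in \Image M$, where $(M+A)^{-1}$ is single-valued by assumption, $\bar u_1=\bar u_2$ follows. Thus $p^k\rightharpoonup u^*\in \zer A$.

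For the second conclusion, assume $M=CC^*$ is an onto decomposition and set $w^k:=C^*u^k$. The isometric identity $\|C^*u\|^2=\bra u, Mu\ket=\|u\|_M^2$ transports \ref{item:opial_sequence} and \ref{item:asymptotic_regularity} into analogous Fej\'er and asymptotic-regularity properties for $(w^k)_{k \in \N}$: using \eqref{eq:resolvent_push_forward} together with the easy identity $\zer(C^*\rhd A)=C^*(\zer A)$ (a direct check from the definition $C^*\rhd A=(C^*A^{-1}C)^{-1}$), one obtains that $\lim_k\|w^k-w^*\|$ exists for every $w^*\in \zer(C^*\rhd A)$ and $\|w^k-J_{C^*\rhd A}(w^k)\|=\|u^k-p^k\|_M\to 0$. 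As $C^*\rhd A$ is maximal monotone, $J_{C^*\rhd A}$ is firmly nonexpansive and the classical Opial lemma yields $w^k\rightharpoonup w^*\in \zer(C^*\rhd A)$. Weak continuity of $C^*$ applied to the already established $p^k\rightharpoonup u^*$, together with the demiclosedness of $I-J_{C^*\rhd A}$ applied to $C^*p^k=J_{C^*\rhd A}(w^k)$, then identifies $w^*=C^*u^*$, whence $(M+A)^{-1}(Cw^k)=J_{M^{-1}A}(u^k)\rightharpoonup u^*=(M+A)^{-1}(Cw^*)$ as claimed.
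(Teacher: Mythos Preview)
Your proof is correct and follows essentially the same route as the paper's: boundedness of $p^k$ via the Lipschitz property of $(M+A)^{-1}$ on $\Image M$, weak--strong closedness of $\gra A$ to place cluster points in $\zer A$, the Opial computation in the $M$-seminorm to get $M\bar u_1=M\bar u_2$, and the key upgrade to $\bar u_1=\bar u_2$ using that fixed points satisfy $\bar u_i=(M+A)^{-1}(M\bar u_i)$ with $(M+A)^{-1}$ single-valued on $\Image M$. The closed-range part also matches the paper's argument; your explicit identification $w^*=C^*u^*$ via weak continuity of $C^*$ is a small extra (harmless) step that the paper leaves implicit.
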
%
\begin{proof}
    For brevity, let us denote by $v^k:=J_{M^{-1}A}(u^k)$ for all $k \in \N$. From \ref{item:opial_sequence}, we deduce that the sequence $(Mu^k)_{k\in \N}$ is bounded. Therefore, since $(M+A)^{-1}C$ is Lipschitz and $J_{M^{-1}A}=(M+A)^{-1}M$, which is then single-valued everywhere on $H$, also $(v^k)_{k \in \N}$ is bounded. Since $M(u^k - v^k) \in A(v^k)$ for all $k \in \N$, $\lim_{k \to +\infty}\| M(u^k - v^k)\| = 0$ from point \ref{item:asymptotic_regularity}, and $A$ is maximal monotone (in particular its graph is weak-to-strong sequentially closed \cite[Proposition 20.38]{BCombettes}), every weak cluster point of $(v^k)_{k \in \N}$ belongs to $\zer A$. Now we show that, in fact, it admits only one weak cluster point and thus $(v^k)_{k\in \N}$ converges weakly to it. Let $(v^{k_j})_{j \in \N}$ and $(v^{k_i})_{i \in \N}$ be two weakly converging subsequences and $v^*, v^{**} \in H$ their respective {weak} cluster points. Note that, from points \ref{item:opial_sequence} and \ref{item:asymptotic_regularity}, for all $u^* \in \zer A$, $\lim_{k\to +\infty}\|v^k - u^*\|_M = \lim_{k\to +\infty}\|u^k - u^*\|_M$, which we denote by $\ell(u^*)$. Suppose by contradiction that $\|v^* - v^{**}\|_M \neq 0$. Thus, 
    \begin{equation}
        \begin{aligned}
            \ell(v^{**}) = \lim_{i \to +\infty} \| v^{k_i} - v^{**} \|_M^2 &= \lim_{i \to +\infty} \left(\|v^{k_i} - v^{*}\|_M^2 + 2\langle v^{k_i} - v^{*}, v^{*}-v^{**}\rangle_M +\|v^{*} - v^{**}\|_M^2 \right)\\
            & = \|v^{*} - v^{**}\|_M^2  + \lim_{i\to +\infty} \|v^{k_i} - v^{*}\|_M^2 > \ell(v^{*})\,,
        \end{aligned}
    \end{equation}
    where we have used that $v^{k_i}\rightharpoonup v^{*}$. Reversing the role of $v^*$ and $v^{**}$, we also get $\ell(v^{**})<\ell(v^{*})$, hence the desired contradiction, which yields $M v^* = Mv^{**}$. Hence, $J_{M^{-1}A}(v^*) = J_{M^{-1}A}(v^{**})$, and, since $v^*, v^{**} \in \zer A = \Fix J_{M^{-1}A}$, we obtain $v^* = v^{**}$.
    
    \rev{If $M$ has closed range and $M=CC^*$ is onto, using \eqref{eq:resolvent_push_forward}, we obtain, since $C^*$ is onto, that for each $w^* \in \zer C^*\rhd A$ there exists $u^* \in \zer A$ such that $C^*u^*=w^*$, and thus
    \begin{equation}
    	\|w^k - w^*\|^2 = \| C^* u^k - C^* u^* \|^2 = \|u^k - u^*\|_M^2
    \end{equation}	
    admits a limit as $k\to+\infty$. Similarly, $\lim_{k \to+\infty}\|w^k - J_{C^*\rhd A}(w^k)\|^2= 0$. The classical Opial's lemma yields that $w^k\rightharpoonup w^* \in \zer C^*\rhd A$. Using now that $(A+M)^{-1}(C w^k)=J_{M^{-1}A}(u^k)$ for all $k \in \N$, which follows from \eqref{eq:preconditioned_resolvent}, and that $J_{M^{-1}A}(u^k)\rightharpoonup u^*$, we conclude the proof.}
\end{proof}

Naturally, a similar result holds in continuous time. We now describe how this lemma can be used to establish the convergence of several \rev{accelerated} splitting methods in a unified manner, \rev{thus demonstrating how Lemma~\ref{lem:opial_degenerate} extends the analysis of \cite{bredies2021degenerate} well beyond standard proximal point algorithms}. It is important to emphasize, however, that Lemma~\ref{lem:opial_degenerate} does not provide any information about the asymptotic behavior of the sequence $(u^k)_{k \in \N}$ itself, which requires a separate, method-specific analysis. We opted to present the adaptation of Theorem~\ref{thm:nonexp_second_order_discrete}, i.e., the one for algorithmic schemes. The same can be done for Theorem~\ref{thm:second_order_direct}, i.e., for continuous-time dynamics, but we leave this to the reader.

\begin{thm}\label{thm:second_order_discrete_degenerate}
	Let $A$ be a maximal monotone operator with $\zer A \neq \emptyset$, and $M$ a self-adjoint positive semidefinite preconditioner such that $(M+A)^{-1}$ is Lipschitz over $\Image M$ and $J_{M^{-1}A}$ has full domain. Let $(u^k)_{k \in \N}$ be the sequence\footnote{We denote by $(u^k)_{k \in \N}$ the sequence generated by Algorithm~\ref{alg:fast_km} to avoid confusion with $\bx^k=(x_1^k, \dots, x_N^k)$ generated by Graph-DRS discussed in Section \ref{sec:fast_graph_drs}.} generated by Algorithm~\ref{alg:fast_km} with $T=(1-s) I + sJ_{M^{-1}A}$, $s \in (0, 2]$, \rev{$\alpha>2$, $\theta \in (1, \alpha-1)$, and $\sigma >0$.} Then:
	\begin{enumerate}[label=(\roman*)]
		\item \label{item:summ_splitting} We have, for all $u^* \in \zer A$,
		\begin{equation*}
			\begin{aligned}
				&\rev{\sum_{k \in \N}  \bra  u^{k}-J_{M^{-1}A}(u^k), J_{M^{-1}A}(u^k)- u^* \ket_M < +\infty\,,}\\
				&\sum_{k \in \N} k\|u^k - J_{M^{-1}A}(u^k) \|_M^2 < \infty\,, \quad \sum_{k \in \N} k \|u^k- u^{k-1} \|_M^2 < +\infty \,.
			\end{aligned}
		\end{equation*}
		\item \label{item:conv_splitting} $(J_{M^{-1}A}(u^k))_{k \in \N}$ converges weakly to some $u^*\in \zer A$ as $k\to + \infty$. \rev{Additionally, if $s=1$, also $(u^k)_{k \in \N}$ converges weakly to $u^*$}.
		\item \label{item:rate_splitting} We have the following \textit{fast} rates of convergence:
		\begin{equation*}
			\begin{aligned}
			&\rev{ \bra u^k - J_{M^{-1}A}(u^k), J_{M^{-1}A}(u^k)-u^*\ket_M = o(k^{-1})\,,}\\
			&\|u^{k} - u^{k-1}\|_M  = o(k^{-1}) \quad \mbox{and} \quad \|u^k - J_{M^{-1}A}(u^k)\|_M  = o(k^{-1}) \quad \mbox{as} \ k \rightarrow +\infty\,.
			\end{aligned}
		\end{equation*}
	\end{enumerate}
	Suppose furthermore that $M$ has closed range and that $M=CC^*$ is onto with $C\colon D\to H$. Then, $w^k:=C^*u^k$ satisfies \ref{item:summ_splitting}-\ref{item:rate_splitting} with $M^{-1}A$ replaced by $C^*\rhd A$ and seminorms replaced by norms (in $D$), and $w^k\rightharpoonup C^* u^*$ as $k \to +\infty$.
\end{thm}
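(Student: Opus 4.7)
The plan is to mirror the proof of Theorem~\ref{thm:nonexp_second_order_discrete} almost verbatim, replacing the Hilbert inner product on $H$ by the semi-inner product $\langle\cdot,\cdot\rangle_M$ throughout. The starting observation is that, for $s\in(0,2]$, the operator $T=(1-s)I+sJ_{M^{-1}A}$ is $\tfrac{s}{2}$-$M$-averaged (hence $M$-nonexpansive), since $J_{M^{-1}A}$ is $M$-cocoercive by \eqref{eq:M_cocoercivity}. A short direct computation then yields the $M$-seminorm analog of \eqref{eq:non_monotonicity_of_Q}, namely
\begin{equation*}
\langle Q(u)-Q(u'),T(u)-T(u')\rangle_M \geq \tfrac{1-s}{s}\|Q(u)-Q(u')\|_M^2,\quad \text{for all } u,u'\in H,
\end{equation*}
with $Q:=I-T$, which reduces to \eqref{eq:non_monotonicity_of_Q} for $s=2$ and is even nonnegative for $s\in(0,1]$.

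Next, I would introduce the Lyapunov function as in \eqref{eq:nonexp_discrete_lyapunov}, but with every $\|\cdot\|$ and $\langle\cdot,\cdot\rangle$ replaced by $\|\cdot\|_M$ and $\langle\cdot,\cdot\rangle_M$, and with the auxiliary sequence $z^{k+1}:=T(u^k)$ satisfying the finite-difference identity \eqref{eq:discrete_second_order}. Because Lemma~\ref{lem:nonexp_derivative_of_Lyapunov_second_order_discrete} and the subsequent manipulations in the proof of Theorem~\ref{thm:nonexp_second_order_discrete} rely exclusively on inner-product identities (bilinearity, three-point identities), monotonicity of $Q$, and the correction inequality analogous to \eqref{eq:non_monotonicity_of_Q}, all of which transfer to the $M$-semi-inner product verbatim, the descent inequality and its consequences carry through. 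This yields parts~\ref{item:summ_splitting} and~\ref{item:rate_splitting} (up to constants involving $s$ arising from $I-T=s(I-J_{M^{-1}A})$); the first summand in~\ref{item:summ_splitting} is in fact automatically nonnegative since $M(u^k-J_{M^{-1}A}(u^k))\in A(J_{M^{-1}A}(u^k))$ and $A$ is monotone with $0\in A(u^*)$.

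For the weak convergence claim~\ref{item:conv_splitting}, the classical Opial argument used in the proof of Theorem~\ref{thm:nonexp_second_order_discrete}\ref{item:nonexp_discr_iters} only delivers the existence of $\lim_{k\to+\infty}\|u^k-u^*\|_M$ for every $u^*\in\Fix J_{M^{-1}A}=\zer A$, which is insufficient in the degenerate setting. The plan is instead to invoke Lemma~\ref{lem:opial_degenerate}: its two hypotheses hold by the Lyapunov analysis (existence of the $M$-seminorm limit) and by~\ref{item:rate_splitting} ($\|u^k-J_{M^{-1}A}(u^k)\|_M\to0$), yielding $J_{M^{-1}A}(u^k)\rightharpoonup u^*\in\zer A$. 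For the additional statement when $s=1$, I would examine the residual $r^k:=u^{k+1}-J_{M^{-1}A}(u^k)$ directly from the update rule, obtaining the one-step recursion
\begin{equation*}
r^{k+1}=(1-\theta_k)\,r^k+(\theta_k+\alpha_k-1)\bigl(J_{M^{-1}A}(u^k)-J_{M^{-1}A}(u^{k-1})\bigr),
\end{equation*}
with $\theta_k+\alpha_k-1=\tfrac{\theta-\alpha}{k+\sigma}$. Using the Lipschitz bound \eqref{eq:lipschitz_bound_to_M} to control the last increment in the original $H$-norm by $\sqrt{L}\,\|u^k-u^{k-1}\|_M=o(k^{-1})$, a standard perturbed-iteration lemma with $\sum_k\theta_k=+\infty$ yields $\|r^k\|\to0$, which transfers the weak convergence of $(J_{M^{-1}A}(u^k))_{k\in\N}$ to $(u^k)_{k\in\N}$.

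Finally, under the closed-range assumption with onto decomposition $M=CC^*$, I would observe that the reduced sequence $w^k:=C^*u^k$ satisfies Algorithm~\ref{alg:fast_km} driven by $\tilde T:=(1-s)I+sJ_{C^*\rhd A}$ on $D$: applying $C^*$ to the update rule and invoking \eqref{eq:resolvent_push_forward} yields precisely the analogous iteration for $(w^k)$. Since $C^*\rhd A$ is maximal monotone on $D$, $\tilde T$ is $\tfrac{s}{2}$-averaged in the classical sense, so parts~\ref{item:summ_splitting}--\ref{item:rate_splitting} for $(w^k)$ follow directly from Theorem~\ref{thm:nonexp_second_order_discrete}, while the weak convergence $w^k\rightharpoonup C^*u^*$ is already furnished by Lemma~\ref{lem:opial_degenerate}. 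The main obstacle I anticipate is the careful bookkeeping required to verify that every identity in the proof of Lemma~\ref{lem:nonexp_derivative_of_Lyapunov_second_order_discrete} uses only the bilinear/quadratic structure that $\langle\cdot,\cdot\rangle_M$ shares with a genuine inner product, together with the clean handling of the $s=1$ residual argument in the original norm of $H$.
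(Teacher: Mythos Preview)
Your proposal is correct and follows essentially the same approach as the paper: replace norms by $M$-seminorms throughout the proof of Theorem~\ref{thm:nonexp_second_order_discrete} to obtain \ref{item:summ_splitting} and \ref{item:rate_splitting}, then invoke the degenerate Opial Lemma~\ref{lem:opial_degenerate} for the weak convergence of $(J_{M^{-1}A}(u^k))_{k\in\N}$, and handle the closed-range part by passing to the reduced sequence $w^k=C^*u^k$ via \eqref{eq:resolvent_push_forward}.

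The only minor deviation is in the $s=1$ argument: the paper works with $y^k:=u^k-J_{M^{-1}A}(u^k)$ and obtains a recursion with two perturbation terms $\alpha_k\Delta_{k-1}-\Delta_k$, which it controls via Young's inequality with a carefully chosen weight; you instead use $r^k:=u^{k+1}-J_{M^{-1}A}(u^k)$, which collapses the perturbation to a single term with coefficient $\tfrac{\theta-\alpha}{k+\sigma}$ and allows a direct perturbed-iteration lemma. Your route is slightly cleaner (one term instead of two), though note a small index shift in your displayed recursion (the coefficients should be $\theta_{k+1},\alpha_{k+1}$ and the increment $J_{M^{-1}A}(u^{k+1})-J_{M^{-1}A}(u^k)$); this does not affect the argument.
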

\begin{proof}
The proof \rev{of point \ref{item:summ_splitting} and point \ref{item:rate_splitting}} follow exactly the same lines of Theorem~\ref{thm:nonexp_second_order_discrete} by replacing every norm by $M$-seminorms, every inner product with $M$-semi-inner products and setting $Q=s(I-J_{M^{-1}A})$. In fact, the only property utilized is the nonexpansiveness of $T$, which in this context translates as the $M$-nonexpansiveness of $sJ_{M^{-1}A}+(1-s)I$ for $s \in (0, 2]$ see \cite[Remark~2.7]{bredies2021degenerate}. Specifically, \ref{item:summ_splitting} can be proven by following the arguments in the proof Theorem~\ref{thm:nonexp_second_order_discrete}\ref{item:nonexp_discr_summability}. This yields, in particular, that 
\begin{equation}
	\lim_{k\to +\infty} \|u^k - J_{M^{-1}A}(u^k)\|_M = 0\,. 
\end{equation}
Similarly, statement \ref{item:rate_splitting} can be shown following the proof of Theorem~\ref{thm:nonexp_second_order_discrete}\ref{item:nonexp_discr_rates}.

Regarding \ref{item:conv_splitting}, following the proof of statement Theorem~\ref{thm:nonexp_second_order_discrete}\ref{item:nonexp_discr_iters}, we get that $\lim_{k \to +\infty} \|u^k - u^*\|_M$ exists for any $u^* \in \zer A$. We are now in the setting of the degenerate Opial lemma (Lemma \ref{lem:opial_degenerate}), which yields \rev{weak convergence of $(J_{M^{-1}A}(u^k))_{k \in \N}$ to some $u^* \in \zer A$.} \rev{To conclude, we pick $s=1$ and establish the weak convergence of $(u^k)_{k \in \N}$ to $u^*$ by showing that $\varphi_k:=\tfrac{1}{2}\|u^k - J_{M^{-1}A}(u^k)\|^2$ vanishes as $k\to+\infty$. Using \eqref{eq:fast_degenerate_ppp} and denoting for brevity $y^k:= u^k - J_{M^{-1}A}(u^{k})$ and $\Delta_k := J_{M^{-1}A}(u^{k+1})-J_{M^{-1}A}(u^{k})$ for all $k \geq 0$, we get
\begin{equation*}
	\begin{aligned}
		y^{k+1} &= u^k + \theta_k(J_{M^{-1}A}(u^k)-u^k) + \alpha_k \big(J_{M^{-1}A}(u^{k})-J_{M^{-1}A}(u^{k-1})\big) -J_{M^{-1}A}(u^{k+1}) \\
		&= (1 - \theta_k) y^k + \alpha_k \big(J_{M^{-1}A}(u^{k})-J_{M^{-1}A}(u^{k-1})\big) + J_{M^{-1}A}(u^{k})-J_{M^{-1}A}(u^{k+1}) \,.\\
        &= (1 - \theta_k) y^k + \alpha_k \Delta_{k-1} - \Delta_k\,.
	\end{aligned}
\end{equation*}
Using the Young inequality with weight $\xi_k > 0$ and \eqref{eq:lipschitz_bound_to_M} we can write 
\begin{equation}\label{eq:descent_varphi}
	\begin{aligned}
	\varphi_{k+1} - \varphi_{k}&\leq  - \big(1- (1 - \theta_k)^2(1 + \xi_k )\big)\varphi_k + \big(1+ \xi_k^{-1}\big)\big(\|\Delta_{k-1}\|^2 + \|\Delta_{k}\|^2\big)\\
    &\leq  - \big(1- (1 - \theta_k)^2(1 + \xi_k )\big)\varphi_k + \big(1+ \xi_k^{-1}\big)L\big(\|u^k - u^{k-1}\|_M^2 + \|u^{k+1} - u^{k}\|_M^2\big)\,.
	\end{aligned}
\end{equation}
Picking now, for all $k$ large enough such that $\theta_k < 2$,
\begin{equation}
    \xi_k = \theta_k\frac{2-\theta_k}{2(1-\theta_k)^2}\,, \quad \text{we get:} \quad \varepsilon_k :=  \big(1- (1 - \theta_k)^2(1 + \xi_k )\big) =  \frac{\theta_k}{2} (2-\theta_k)\,.
\end{equation}
Therefore, since $(\varepsilon_k)_{k \in \N} \notin \ell^1$ by definition of $\theta_k$ and $(k\|u^{k}-u^{k-1}\|_M^2)_{k \in \N}$ is summable from point \ref{item:summ_splitting}, it follows from \eqref{eq:descent_varphi} that $\varphi_k\to 0$. Thus, $u^k\rightharpoonup u^*$ weakly as well.
}
\end{proof}

\begin{rmk}
	If $\alpha = \sigma = 2$, we are in the setting of the OHM method (cf.~Section \ref{sec:halpern}), which produces a sequence that converges \textit{strongly}. In this case, if $M$ has closed range, there is indeed no need to rely on a different analysis with degenerate preconditioners since the map $(M+A)^{-1}C$ is Lipschitz, and the convergence of $(u^k)_{k \in \N}$ would follow from that of $(w^k)_{k \in \N}$.
\end{rmk}

\subsection{Application to \rev{Splitting Algorithms}}\label{sec:application_to_splitting_algs}

In this section, we illustrate how Theorem~\ref{thm:second_order_discrete_degenerate} can be applied to show complete convergence results for accelerated splitting algorithms. \rev{We consider only three notable examples, but the analysis applies to a broad range of algorithms such as those in \cite[Chapter 4]{chenchene2023splitting} or in \cite[Section 3]{bredies2021degenerate}. See also the recent contribution \cite{syzz2025} for an application to ADMM, using similar techniques to ours.} 

\subsubsection{Fast PDHG}\label{sec:fast_chambolle_pock} \rev{We start from the PDHG method by Chambolle--Pock \cite{cp11} described in Example \ref{example:chambolle_pock}. Applying \eqref{eq:fast_degenerate_ppp} with $s\in (0, 2]$, and $J_{M^{-1}A}$ build as in Example \ref{example:chambolle_pock}, with step sizes satisfying $\tau_1\tau_2\|L\|^2 \geq 1$, we obtain a variant  of the PDHG algorithm \eqref{eq:chambolle_pock} enhanced with momentum. Denoting by $\wt{u}^{k+1}:=(\wt{x}^{k+1}, \wt{y}^{k+1}):=J_{M^{-1}A}(u^k)$ and recalling the definition of the primal-dual gap $\cG_{u^*}$ in \eqref{eq:primal_dual_bound}, Theorem \ref{thm:second_order_discrete_degenerate}\ref{item:rate_splitting}, yields the following fast rates on the last iterate:
\begin{equation}
	\|u^k - J_{M^{-1}A}(u^k)\|_M = o(k^{-1})\,, \quad \text{and} \quad \cG_{u^*}(\wt{x}^{k+1}, \wt{y}^{k+1}) = o(k^{-1}) \quad \text{as}\  k \to+\infty\,,
\end{equation}
for all $u^*:=(x^*, y^*)$ saddle point of \eqref{eq:lagrangian}, thus improving the corresponding $o(k^{-\frac{1}{2}})$ rates for the standard PDHG method. Additionally, from Theorem \ref{thm:second_order_discrete_degenerate}\ref{item:iters}, we obtain the weak convergence of $(\wt{u}^{k})_{k \in \N}$ and (if $s=1$) of $(u^{k})_{k \in \N}$ to the same saddle point of $\cL$. Note that in the non-degenerate setting  $\tau_1\tau_2\|L\|^2 > 1$ the application of OHM, i.e., Algorithm \ref{alg:fast_km} in the edge case $\alpha=\sigma = 2$, to PDHG was proposed in \cite{kim2021}.
}

\rev{
\subsubsection{Fast Douglas--Rachford Splitting}\label{sec:fast_drs} In the special case of Douglas--Rachford iterations according to Example \ref{example:drs}, applying \eqref{eq:fast_degenerate_ppp} we obtain, for $\alpha > 2$, $\theta \in (1, \alpha-1)$, the \emph{Fast-Douglas--Rachford} method:
\begin{equation}\label{eq:fast_drs}
	\left\{
	\begin{aligned}
		& x_1^{k+1} =J_{A_1}(w^k)\,,\\
		& x_2^{k+1}= J_{A_2}(2x_1^{k+1}-w^k)\,,\\
		& w^{k+1}=  w^k + \alpha_k(w^k - w^{k-1}) + s\theta_k (x_2^{k+1}-x_1^{k+1}) +  s\alpha_k\big(x_2^{k+1}-x_2^k-(x_1^{k+1} - x_1^k)\big)\,.
	\end{aligned}\right. 
\end{equation}
Theorem \ref{thm:second_order_discrete_degenerate}, together with the degenerate preconditioned proximal point formulation of the method described in Example \ref{example:drs}, allows us to improve the worst-case rate \eqref{eq:rate_douglas_rachford} from $o(k^{-\frac{1}{2}})$ to
\begin{equation}\label{eq:fast_rate_drs}
	\|x_2^{k+1} - x_1^{k+1}\| = o(k^{-1})  \quad \text{as} \ k \to +\infty\,,
\end{equation}
while keeping the weak convergence of the shadow sequences $(x_1^{k+1})_{k \in \N}$ and $(x_2^{k+1})_{k \in \N}$ to the same zero of $A_1 + A_2$. Let us underscore that while \eqref{eq:fast_rate_drs} and the weak convergence of $(w^k)_{k \in \N}$ follow from classical theory and were established in \cite{botkhoa2023} in the corresponding special case, the weak convergence of the shadow sequences requires dedicated analysis. Here, it follows from Theorem \ref{thm:second_order_discrete_degenerate}\ref{item:conv_splitting}.
}

\rev{
\subsubsection{Fast Graph Douglas--Rachford Splitting}\label{sec:fast_graph_drs}
In this section, we show the application of Theorem~\ref{thm:second_order_discrete_degenerate} to Graph-DRS presented in Example \ref{example:graph_drs}. Applying \eqref{eq:fast_degenerate_ppp}, we get, for $k\geq 0$ and $\bw^{-1}, \bw^0\in H^{N-1}$, $\alpha>2$, and $\theta \in (0, 1)$:
\begin{equation}\label{eq:fast_graph_drs}
	\left\{
	\begin{aligned}
		& x_i^{k+1} =J_{\frac{\tau}{d_i}A_i}\bigg(-\frac{2}{d_i}\sum_{h=1}^{i-1}\left({L}_{hi}+\hat{{L}}_{hi}\right)x_h^{k+1} + \frac{1}{d_i}\sum_{j=1}^{N-1}Z_{ij}w_j^k\bigg)\,, \quad \text{for all} \ i \in \llbracket 1, N\rrbracket\,,\\
		& w_j^{k+1} = w_j^k + \alpha_k(w_j^k - w_j^{k-1}) - s\theta_k\sum_{i=1}^{N} Z_{ij} x_i^{k+1} - s\alpha_k \sum_{i=1}^{N} Z_{ij}(x_i^{k+1}-x_i^k)\,, \quad \text{for all} \ j \in \llbracket1, N-1\rrbracket\,.
	\end{aligned}\right.
\end{equation}
Also in this case, Theorem \ref{thm:second_order_discrete_degenerate}, together with the degenerate preconditioned proximal point representation of the method mentioned in Example \ref{example:graph_drs}, allows us to improve the rates in \eqref{eq:def_variance} yielding
\begin{equation}
	\Var(\bx^{k+1}) = o(k^{-2}) \quad \text{as} \ k\to +\infty\,, 
\end{equation}
keeping the weak convergence of the solution estimates $(x_i^{k+1})_{i\in \N}$ to the same solution $x^*\in H$ of \eqref{eq:Nop}.}%

\section{Numerical Experiments}\label{sec:num}

In this section, we present our numerical experiments, which were performed in Python on a 12th-Gen.~Intel(R) Core(TM) i7--1255U, $1.70$--$4.70$ GHz laptop with $16$ Gb of RAM. The code is available for reproducibility at \url{https://github.com/echnen/fast-km}. 

\subsection{Toy problems}\label{sec:num_preparations} 
In these first numerical experiments, we consider two main instances of $T$:
\begin{itemize}[left=0pt]
\item We consider an optimization problem that can be written as
	\begin{equation}\label{eq:sum_problem}
		\min_{x \in \R^d}\ f_1(x)+\dots + f_N(x)\,, 
	\end{equation}%
	for $f_1,\dots, f_N\colon \R^d\to \R\cup\{+\infty\}$ proper, convex and lower semicontinuous such that $\prox_{\tau f_i}$ can be easily computed for each $\tau >0$. In this case, we take $T$ as given in \eqref{eq:general_drs} \rev{applied to the first-order optimality conditions of \eqref{eq:sum_problem},} and thus use Fast Graph-DRS initialized with $\bw^{-1}=\bw^0=0$.
\item As in \cite{botkhoa2023}, for dimension $d\in \N$ even, we consider $T\colon \R^d\to \R^d$ defined by  $T(x):=J_{\tau \Sigma}(x)$, where $\tau =10^{-1}$ and $\Sigma$ is the skew-symmetric (hence monotone) matrix given block-wise by:
	\begin{equation*}
		\Sigma := \begin{bmatrix}
			0 & I\\
			-I & 0
		\end{bmatrix}\,,
	\end{equation*}
	where $I$ is now the identity operator in $\R^{d/2}$ and $0$ represents the zero operator in $\R^{d/2}$. In this case, we use Algorithm~\ref{alg:fast_km} initialized with $x^{-1}=x^{0}=\mathbf{1}$.
\end{itemize}

\subsection{The influence of parameters}\label{sec:num_parameters} 
As we introduced two degrees of flexibility in Algorithm~\ref{alg:fast_km} consisting in the parameter \rev{$\theta \in (1, \alpha-1)$} and $\sigma >0$, we first numerically investigate the influence of these two parameters separately. We consider the two instances of $T$ \rev{described in} Section~\ref{sec:num_preparations}. In the first one, we set $d=2$, $N=2$ and thus \rev{apply the Fast-Douglas--Rachford method according to Section \ref{sec:fast_drs} to}:
\begin{equation*}
	f_1(x) := 10^{-3}|x|\,, \quad \text{and} \quad f_2(x):=\frac{1}{2}\dist^2(x, \mathbb{B})\,, \quad x \in \R^2\,,
\end{equation*}
where $\mathbb{B}$ is the unit euclidean ball centered at $(1,1)^T$. The two proximity operators are easily accessed through, soft-thresholding and averaged projection onto $\mathbb{B}$, respectively; see \cite[Example~14.5 and Proposition~24.8(vii)]{BCombettes}. In the second one, we set $d=10$. 

\subsubsection{Influence of \texorpdfstring{$\theta$}{$\theta$}}\label{sec:num_effect_of_eta}

In this section, we perform a systematic comparison between $\eta=\tfrac{1}{2}$, which was introduced (when $\sigma= \alpha + 1$) in \cite{botkhoa2023} and the choices $\eta \in (0, 1)$, \rev{interpolating the parameter ranges $\theta \in (1, \alpha-1)$}. Specifically, we consider $\alpha=2$, $4$, $16$, $32$ and two different choices of $\sigma$, $\sigma=2$ and \rev{$\sigma = 17$}. Note in particular that the special case of \cite{botkhoa2023} is considered with $\alpha = 32$. When $\alpha=2$ the methods with different values of $\eta$ all coincide (since $\rev{\theta} = 1$ for all $\eta \in (0, 1)$) and, \rev{when $\sigma=2$}, the resulting method reduces to OHM, see Section \ref{sec:halpern}. We plot the results in Figure~\ref{fig:experiment_1_pd} and Figure~\ref{fig:experiment_1_mt}.

\rev{Our numerical experiments demonstrate that Algorithm~\ref{alg:fast_km} performs particularly well compared to the baseline choice $\eta = \tfrac{1}{2}$. As shown in Figure~\ref{fig:experiment_1_pd}, performances improve noticeably when \rev{$\theta > \tfrac{\alpha}{2}$}; in fact, the larger the relaxation parameter, the faster the method tends to converge. However, this behavior does not occur in Figure~\ref{fig:experiment_1_mt}, which corresponds to the case where $T$ is the resolvent of a skew-symmetric matrix. In that setting, the baseline choice $\eta = \tfrac{1}{2}$ yields the best performance.

We suspect that this contrasting behavior stems from an intrinsic trade-off between minimizing the gap function and minimizing the fixed-point residual. An inspection of the constants in \eqref{eq:explicit_rates} suggests that larger relaxation parameters favor gap-function reduction, while their effect on the fixed-point residual appears to be strongly problem-specific. In particular, when $T$ is the resolvent of a skew-symmetric operator, the gap function is identically zero, making the residual the sole quantity of interest.}

\rev{Eventually, we can also observe that while OHM ($\alpha=\sigma= 2$), i.e., the accelerated Douglas--Rachford method in the sense of Kim \cite{kim2021}, achieves an \emph{optimal} non-asymptotic convergence rate \cite{ykr24}, Algorithm \ref{alg:fast_km}, which exhibits \emph{order-}optimal but little-$o$ rates, can perform much better in practice.}

\begin{figure}[t]
	\begin{subfigure}{0.49\linewidth}
		\includegraphics[width=\linewidth]{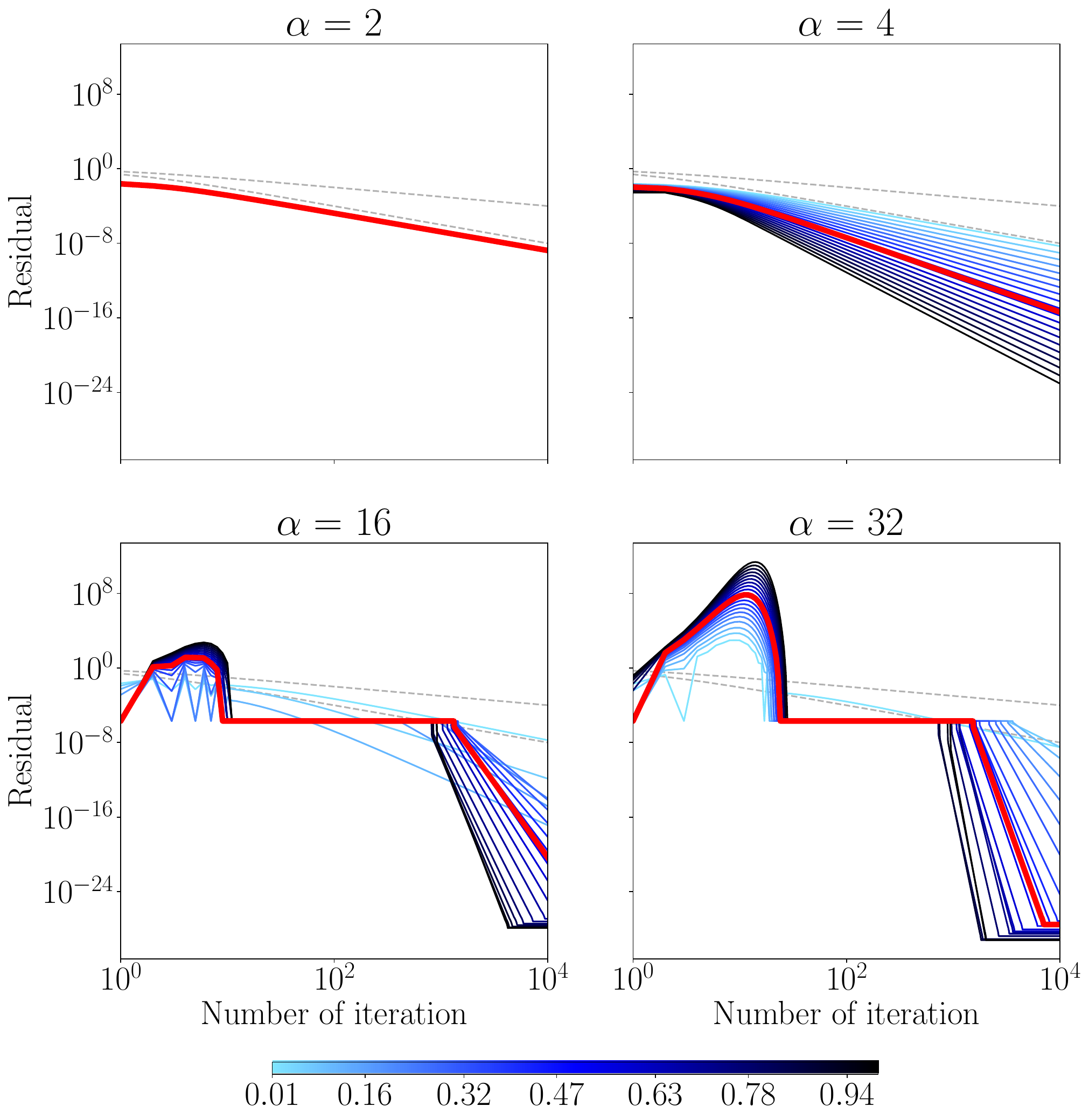}
		\caption{The case $\sigma = 2$}
		\label{fig:experiment_1_1}
	\end{subfigure}
	\begin{subfigure}{0.49\linewidth}
		\includegraphics[width=\linewidth]{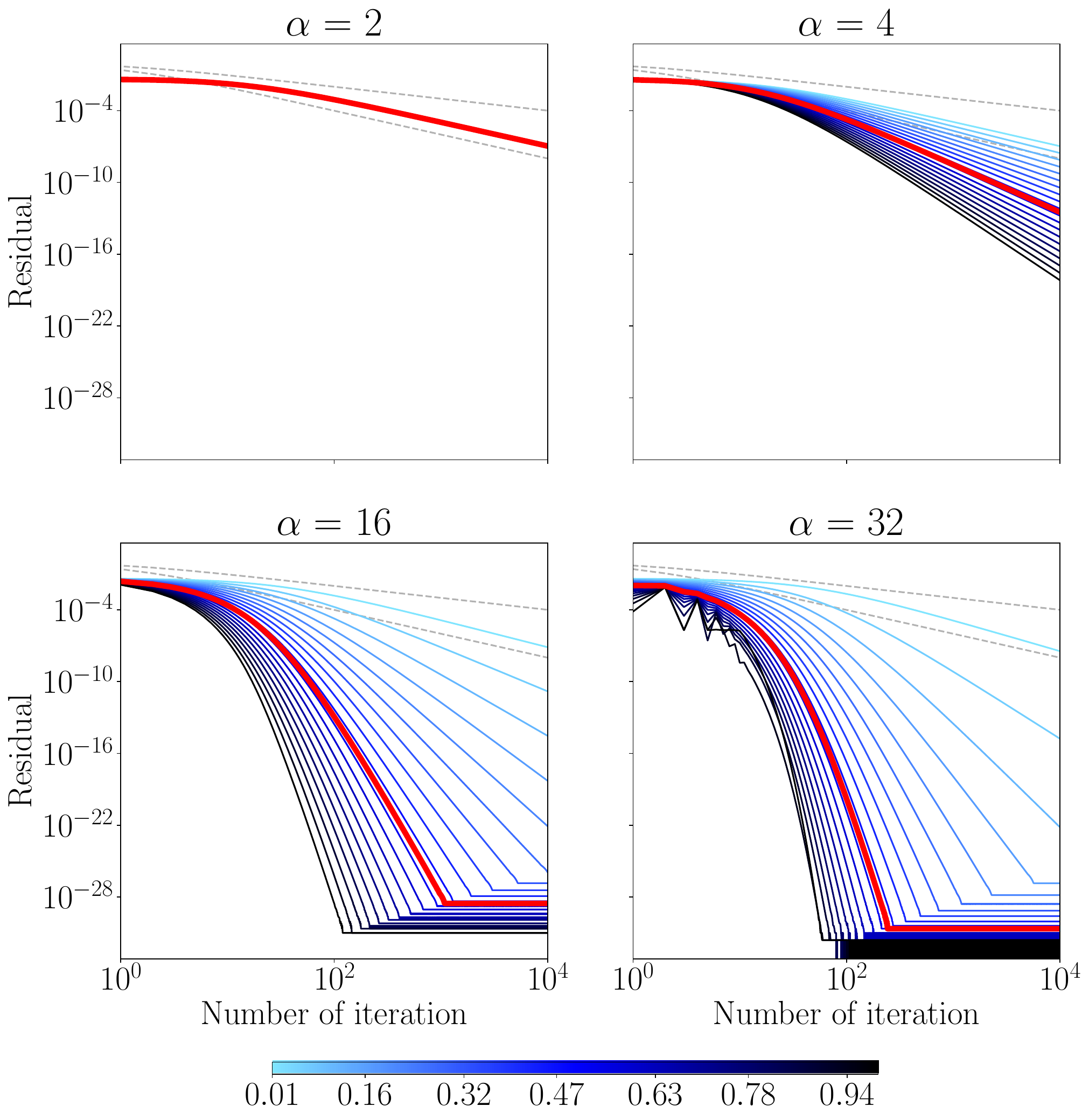}
		\caption{The case $\sigma = 17$}
		\label{fig:experiment_1_2}
	\end{subfigure}
	\caption{Results of Section \ref{sec:num_effect_of_eta} with the first instance of $T$ described in Section~\ref{sec:num_preparations}: Comparison of different choices of $\eta \in (0,1)$, with corresponding colors indicated in the colorbar. The case $\eta=\tfrac{1}{2}$ is highlighted in red.}
	\label{fig:experiment_1_pd}
\end{figure}

\begin{figure}[t]
	\begin{subfigure}{0.49\linewidth}
		\includegraphics[width=\linewidth]{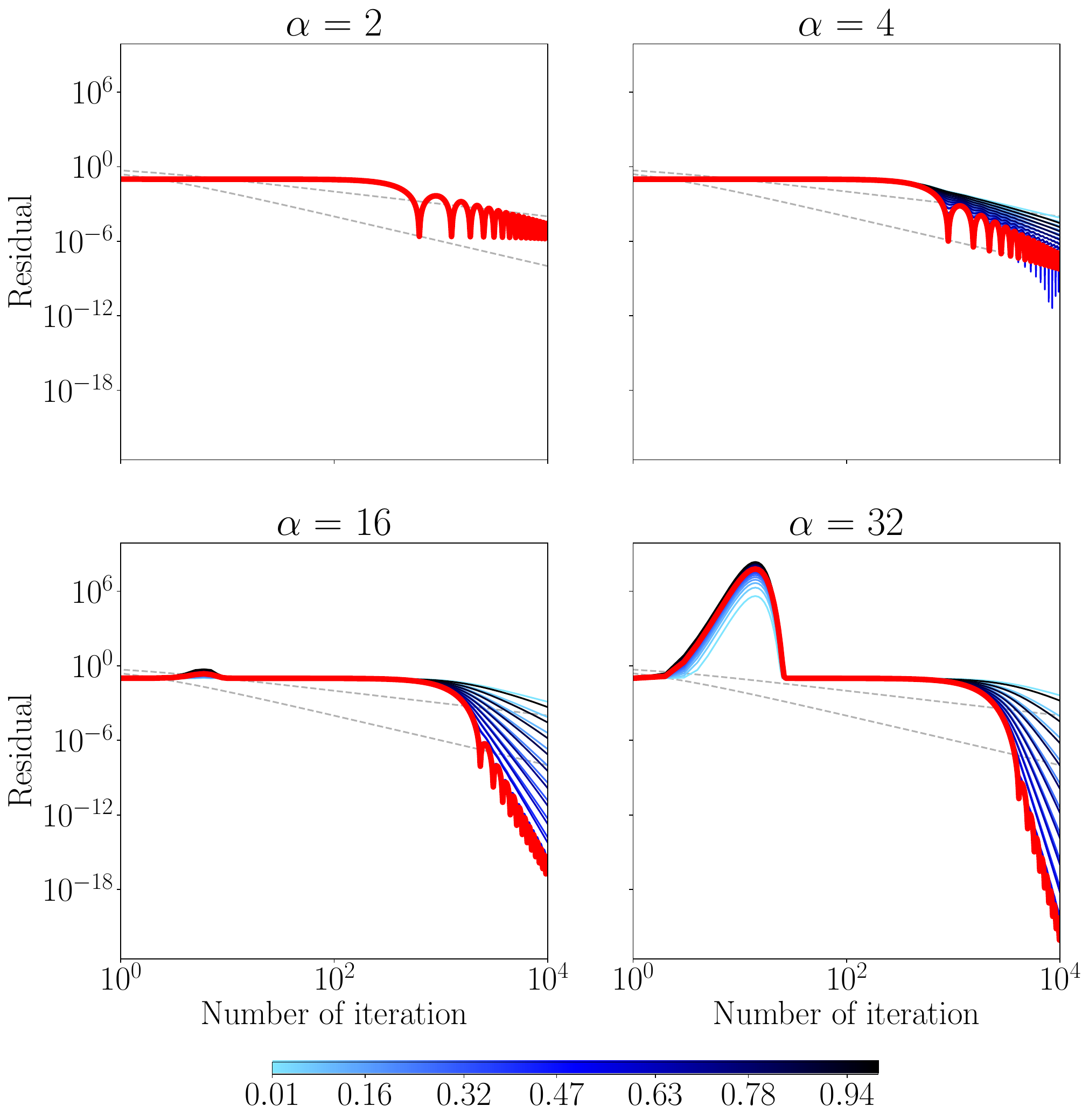}
		\caption{The case $\sigma = 2$}
		\label{fig:experiment_1_3}
	\end{subfigure}
	\begin{subfigure}{0.49\linewidth}
		\includegraphics[width=\linewidth]{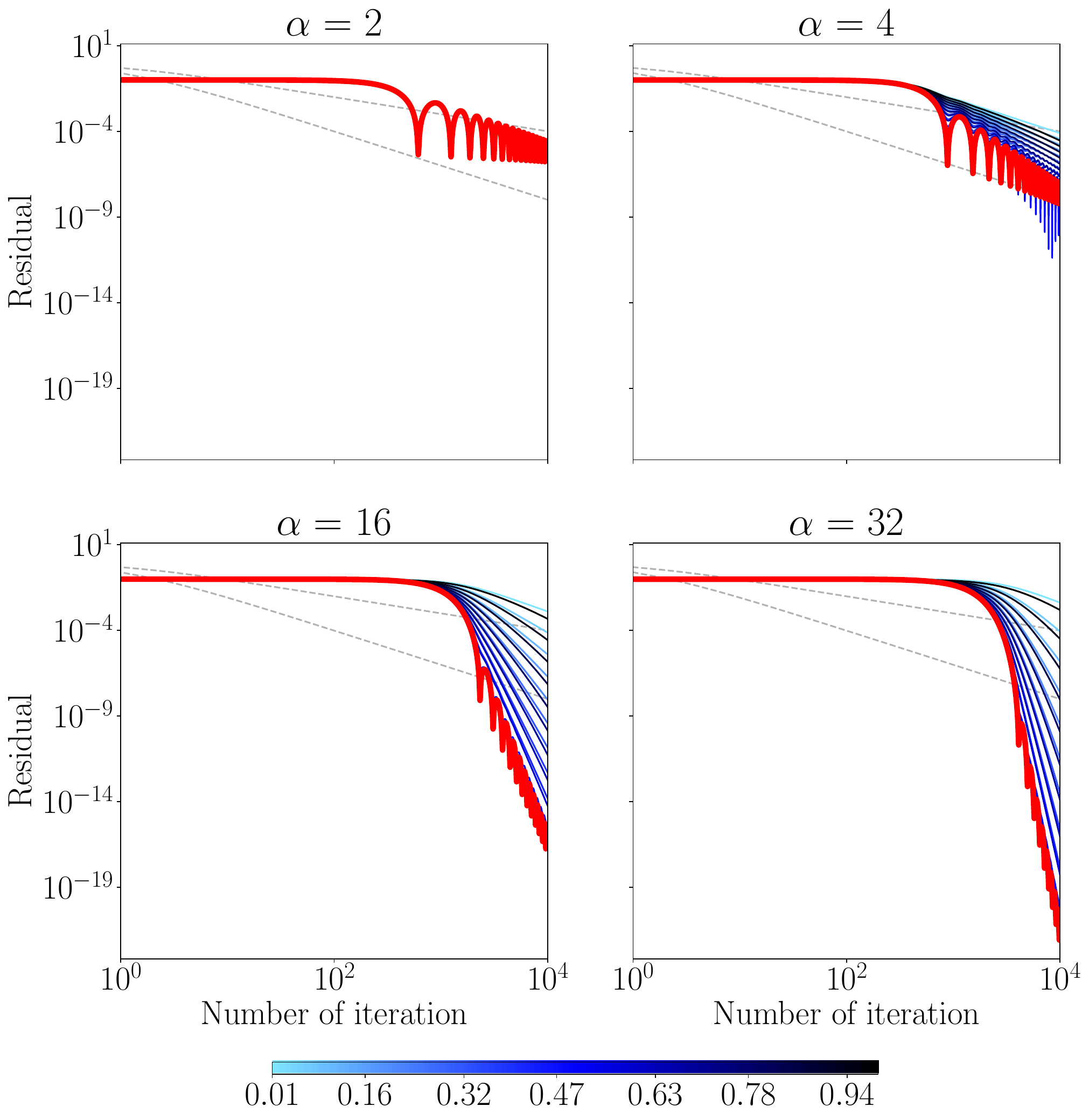}
		\caption{The case $\sigma = 17$}
		\label{fig:experiment_1_4_mt}
	\end{subfigure}
	\caption{Results of Section \ref{sec:num_effect_of_eta} with $T$ as described the second instance of $T$ described in Section~\ref{sec:num_preparations}: Comparison of different choices of $\eta \in (0,1)$, with corresponding colors indicated in the colorbar. $\eta=\tfrac{1}{2}$ is highlighted in red.}
	\label{fig:experiment_1_mt}
\end{figure}

\subsubsection{The influence of \texorpdfstring{$\sigma$}{sigma}}\label{sec:num_effect_of_sigma} Keeping in mind \eqref{eq:discrete_second_order}, the parameter $\sigma$ is related to the starting time $t_0$ in the continuous dynamics \eqref{eq:second_order_dynamic_general}. Recall in fact that the discrete time is defined by $t_k= k - 1 + \sigma $ so $t_0= \sigma - 1$. We test $20$ different choices of $\sigma$ spaced evenly from $1$ to $100$ and take the four choices $\alpha=2$, $4$, $16$, $32$. We put particular emphasis on the choice \rev{$\sigma =\alpha$, which, for $\alpha=2$ leads to OHM} \cite{lieder21, ykr24}. We consider only the operator $T$ corresponding to the second instance of Section \ref{sec:num_preparations}. The results are depicted in Figure~\ref{fig:experiment_2_pd}. We refrain from displaying the results for the first instance of $T$ in Section~\ref{sec:num_preparations} since the results therein are almost indistinguishable. Regarding the tuning of the parameter $\sigma$, our experiments suggest indeed setting $\sigma =\alpha$.

\begin{figure}[t]
	\begin{subfigure}{0.49\linewidth}
		\includegraphics[width=\linewidth]{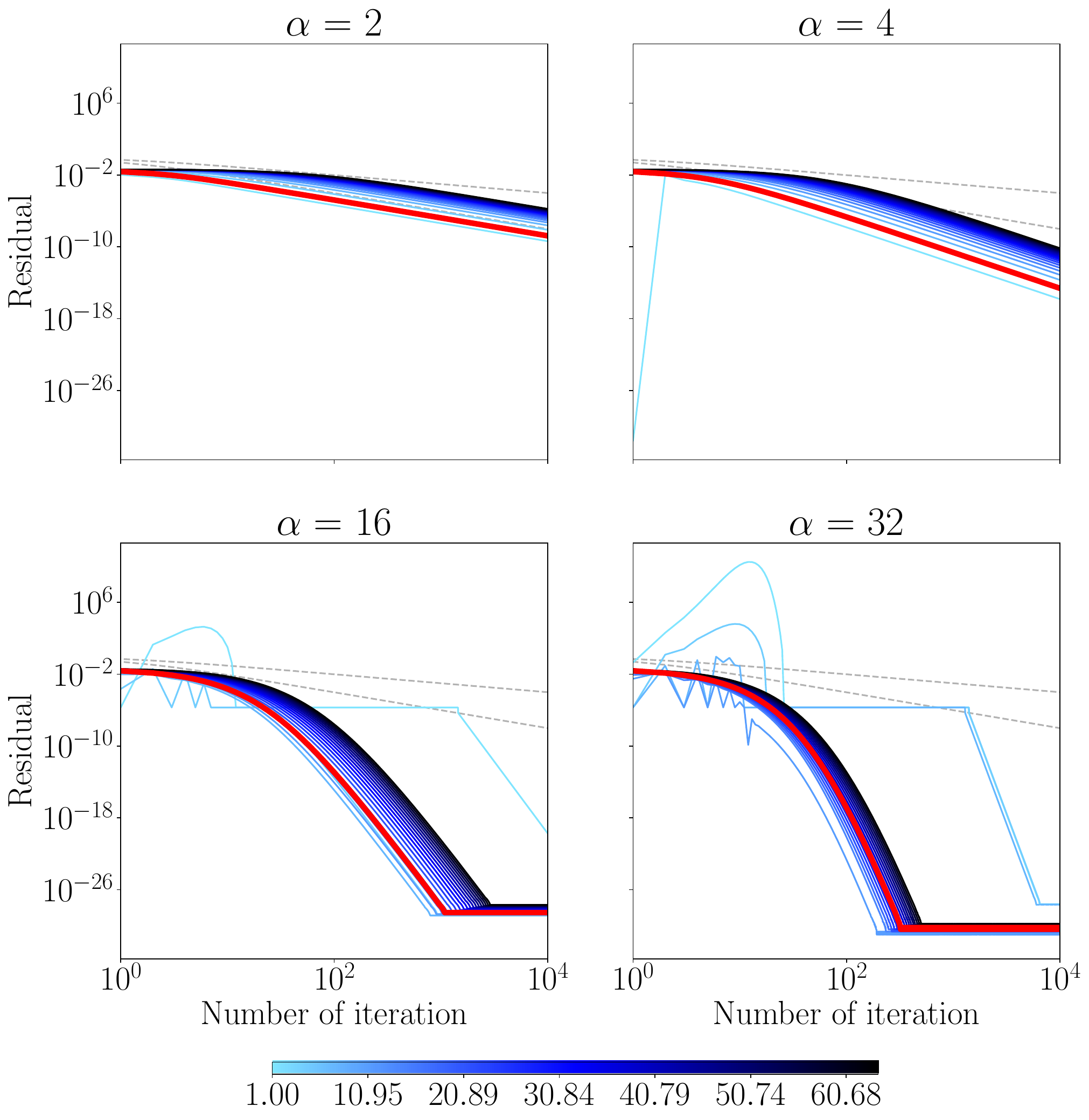}
		\caption{The case $\eta = 0.5$}
		\label{fig:experiment_2_1}
	\end{subfigure}
	\begin{subfigure}{0.49\linewidth}
		\includegraphics[width=\linewidth]{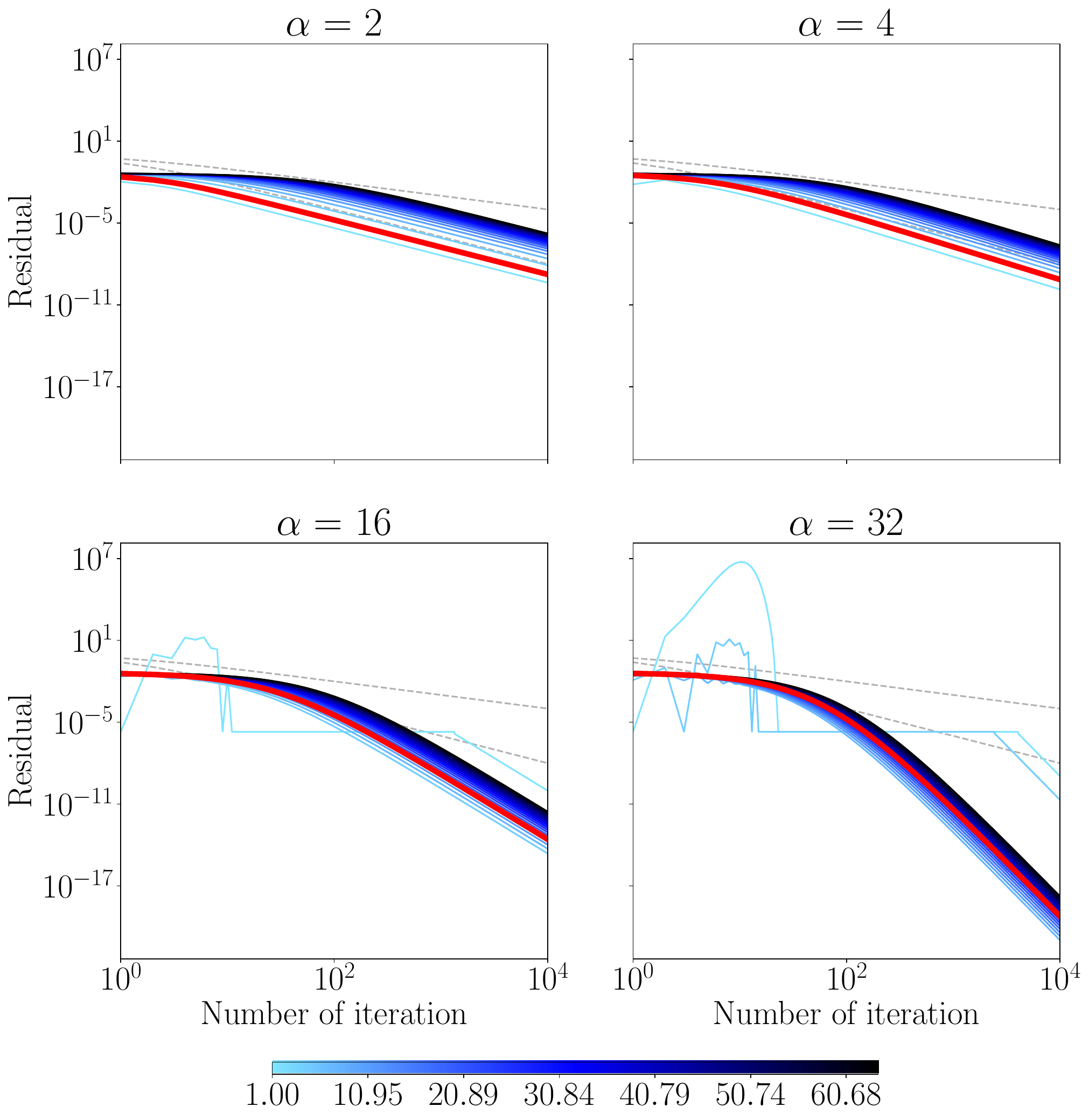}
		\caption{The case $\eta = 0.1$}
		\label{fig:experiment_2_2}
	\end{subfigure}
	\caption{Results of Section \ref{sec:num_effect_of_sigma}: Comparison between different choices of $\sigma\in (0,100)$. $\sigma=\alpha$ is highlighted in red.}
	\label{fig:experiment_2_pd}
\end{figure}

\subsection{Cooling Heuristics}\label{sec:num_cooling}
Motivated by the insights of Sections~\ref{sec:num_effect_of_eta}, and \ref{sec:num_effect_of_sigma}, we propose a ``\textit{cooling}'' heuristics\footnote{The name is borrowed from Consensus Based Optimization literature \cite{fhpp22}.}, which consists in gradually increasing the parameter $\alpha$ along the iterations until a certain upper bound $\alpha_{\max}$ is reached: $\alpha_0<\alpha_1<\dots < \alpha_{{\lfloor \texttt{maxit}/2\rfloor}} = \alpha_{\max}$. Specifically, we consider two different strategies: a \textit{linear} and a \textit{logarithmic} parameter scaling. In either case, we set $\alpha_0 >0$, $\alpha_{\max}=100 \alpha_0$, and we fix a maximum number of iterations \texttt{maxit}. Then, in first case, at each iteration we increase $\alpha$ with a fixed step size, in such a way that $\alpha_{\max}$ is reached for $k = \lfloor\texttt{maxit}/2\rfloor$. In the second case, the increase is defined in a logarithmic scale.

\rev{To set up our numerical experiment,} we consider $\alpha = 2$, $4$, $16$, $32$, and the operator $T$ as given in the second choice in Section~\ref{sec:num_preparations}. For each $\alpha$, we compare the performance of the vanilla Krasnoselskii--Mann (KM) method, Fast-KM with $\eta=\tfrac{1}{2}$ and $\alpha=\sigma$, and Fast-KM with linear and logarithmic scaling in the cooling heuristics and $\eta = \frac{1}{2}$, $\alpha_0 = \alpha$, abbreviated (Fast-KM-LNC) and (Fast-KM-LGC), respectively. Recall that $\alpha_{\max} = 100 \alpha_0$. The results are shown in Figure~\ref{fig:experiment_3}. A notable insight  from these experiment is the beneficial convergence speed of Algorithm~\ref{alg:fast_km} with large parameters $\alpha$, but without incurring into the undesirable initial instability state; see also \cite{AAViscosityParameter24} who thoroughly studied the choice of $\alpha$ for the ODE \eqref{eq:intro_general_second_order} when $\beta(t) \equiv 0$, $b(t) \equiv 1$ and $Q$ is the gradient of a convex function.

\begin{figure}[t]
\includegraphics[width=\linewidth]{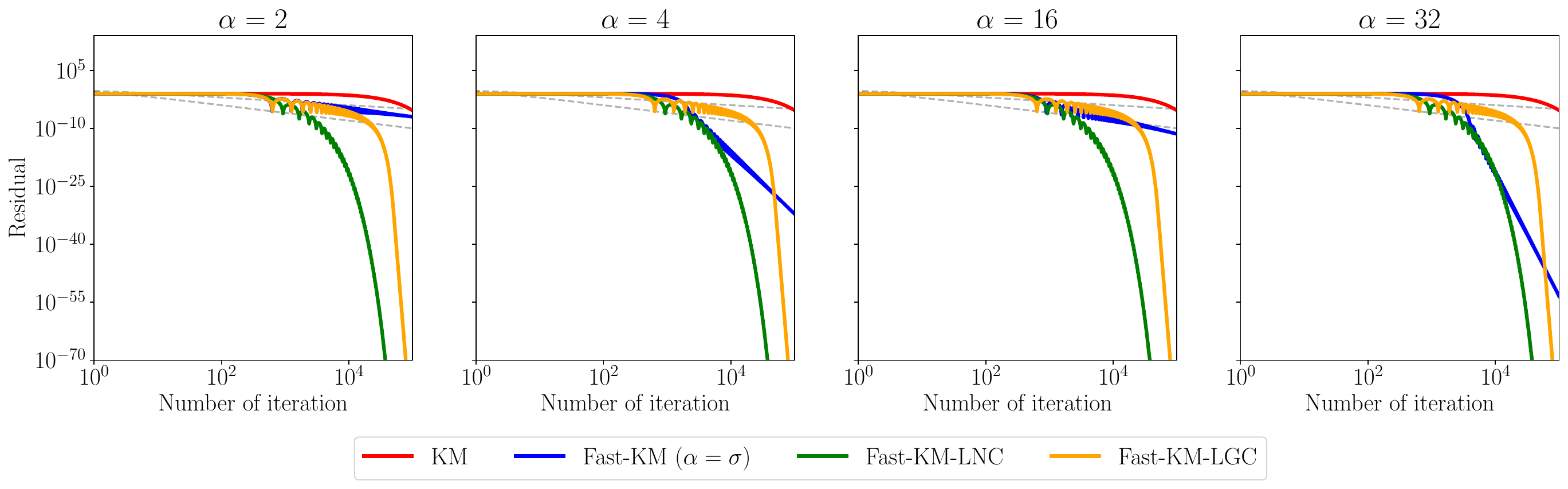}
\caption{Results of Section \ref{sec:num_cooling}: Testing cooling heuristics. }
\label{fig:experiment_3}
\end{figure}

\subsection{Application to Realistic Problems}\label{sec:num_app}

In this section, we test Algorithm~\ref{alg:fast_km} to solve optimization problems arising from realistic settings. We start with an application to Optimal Transport (OT), and in particular, to the \textit{Beckmann minimal flow formulation} of the Wasserstein $1$ distance, where DRS proved particularly effective \cite{ppo14, bc15, carlier2023wassersteinmediansrobustnesspde}. In the second experiment, we consider the geometric median problem, a standard nonsmooth benchmark especially for distributed optimization.

In these two experiments, we compare the performance of i) the vanilla Krasnoselskii--Mann (KM) method, ii) Algorithm~\ref{alg:fast_km} with $\eta = 0.5$, iii) Algorithm~\ref{alg:fast_km} with \rev{$\eta =0.9$} and iii) Algorithm~\ref{alg:fast_km} with a linear cooling heuristics as introduced in Section \ref{sec:num_cooling}. The results are reported in Figure~\ref{fig:experiment_applications}.

\subsubsection{Optimal Transport}\label{sec:num_ot}

In this experiment, we tackle a high dimensional OT problem, which is usually solved in its \rev{regularized} primal form using the celebrated Sinkhorn Algorithm~\cite{BenamouBregmanOT15}. Consider a square grid $\Omega:=\{(i, j) \mid i, j\in \llbracket 1, p\rrbracket \}$, with $p\in \N$, and two discrete probability measures $\mu$, and $\nu$ on $\Omega$, i.e., $\mu$, $\nu\in \R^n_+$ with $n=p^2$ such that $\sum_{i=1}^n \mu_i=\sum_{i=1}^n \nu = 1$. Denoted by $\Div$ the discrete divergence operator with Neumann boundary conditions on $\Omega$ defined trough finite differences (see, e.g., \cite{carlier2023wassersteinmediansrobustnesspde}), we aim at solving the following convex problem:
\begin{equation}\label{eq:beckmann}
	\min_{\sigma \in \R^{n\times 2}} \ \|\sigma\|_{2,1}\,, \quad \text{s.t.:}\quad \Div \sigma = \mu-\nu\,, 
\end{equation}
where $ \|\sigma\|_{2,1}$  is the \textit{group-lasso} penalty, i.e., $\|\sigma\|_{2, 1}:=\sum_{i=1}^{n}\|\sigma_i\|$, and for each $i\in \llbracket 1, n\rrbracket$, $\sigma_i\in \R^2$. \rev{Specifically, we consider $p=100$, which represents a relatively large scale instance for OT.}

\rev{This \emph{basis pursuit}-type problem can addressed by applying the \rev{Chambolle--Pock} method (cf.~Example \ref{example:chambolle_pock}) with $f(\sigma):=\|\sigma \|_{2,1}$, $g(\sigma):=\iota_{\{\mu - \nu\}}(\sigma)$, and $L := \Div$.} These two functions are indeed \textit{prox-friendly}. $\prox_{\tau f}$ is the group soft-thresholding operator (see \cite[Example~14.5]{BCombettes}), while $\prox_{\tau g}$ is just the constant function mapping to $\mu - \nu$. We apply the \rev{Fast-Chambolle--Pock solver} (Section \ref{sec:fast_chambolle_pock}) to solve \eqref{eq:beckmann} with $\tau_1 \tau_2 = 10^{-1}$, which is a lower bound for $\|L\|^2$ and $\tau_1 =10^{-3}, 10^{-2}$ and $10^{-1}$.

As can be seen from Figure~\ref{fig:experiment_ot}, the Fast-Chambolle--Pock method emerges here as a great practical alternative to the plain Chambolle--Pock: the method outperformed the standard approach with KM, \rev{especially in regimes with high oscillations}, and confirmed once again that \rev{large relaxation parameters are indeed beneficial in practice}.

\subsubsection{Solving the Geometric Median Problem}\label{sec:num_median}

We now turn to computing the geometric median of $N$ sample points in $\R^d$. Given a sample $\{x_i\}_i$ of $N$ points in $\R^d$ we aim at solving
\begin{equation}\label{eq:median}
	\min_{x \in \R^d} \  \|x-x_1\|+\dots + \|x-x_N\|\,.
\end{equation}
To solve \eqref{eq:median}, we consider the graph-DRS operator in \eqref{eq:general_drs} splitting \eqref{eq:median} in $N$ terms. \rev{We consider, $N=100$ and $d=100$.} All the terms $x\mapsto \|x-x_i\|$ are in fact prox-friendly with proximity operators that can be computed by simply applying the soft-thresholding operator suitably shifted by $x_i$. We apply \eqref{eq:fast_graph_drs} with $\hat{Z}=0$ and build an arbitrary $Z$ such that  $\ker Z^* = \Span\{\mathbf{1}\}$ \rev{and show the results in Figure \ref{fig:experiment_med}}. The same conclusions drawn from the OT experiment in Section~\ref{sec:num_median} remain true here.

\begin{figure}[t]
	\begin{subfigure}{0.27\linewidth}
		\includegraphics[width=\linewidth]{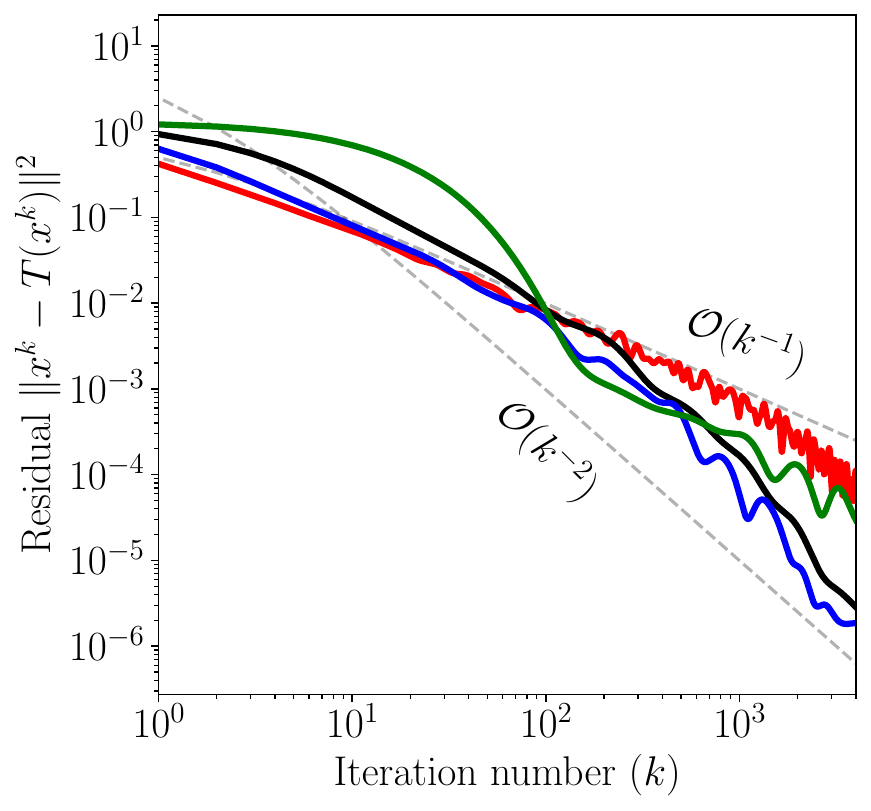}
		\caption{\rev{Geometric Medians}}
		\label{fig:experiment_med}
	\end{subfigure}
	\begin{subfigure}{0.71\linewidth}
		\includegraphics[width=\linewidth]{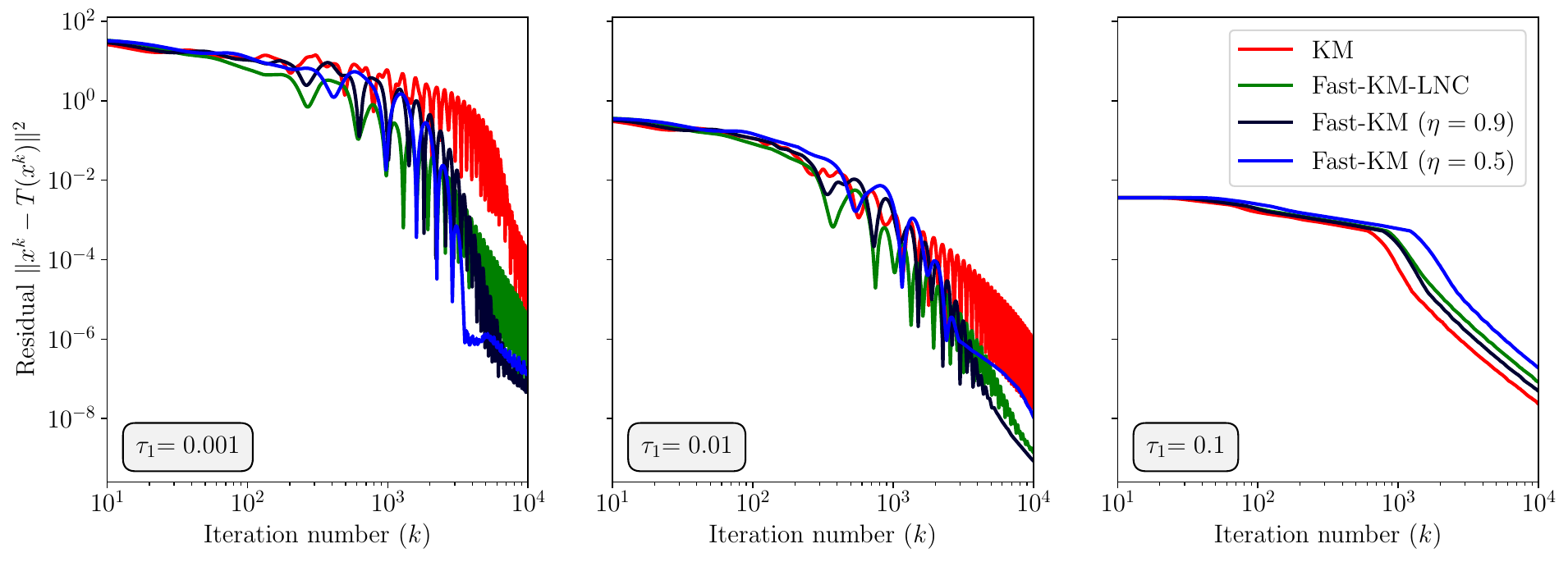}
		\caption{\rev{Optimal Transport}}
		\label{fig:experiment_ot}
	\end{subfigure}
	\caption{\rev{Results of Section \ref{sec:num_app}. In Figure \ref{fig:experiment_med}, we solve a geometric median problem (Section \ref{sec:num_median}), using Graph-DRS as the fixed-point operator $T$, cf.~Section \ref{sec:fast_graph_drs} and Example \ref{example:graph_drs}. In Figure \ref{fig:experiment_ot}, we solve an $\ell_1$ large scale optimal transportation problem, using Chambolle--Pock as the fixed-point operator $T$, cf.~Section \ref{sec:fast_chambolle_pock} and Example \ref{example:chambolle_pock}.}}
	\label{fig:experiment_applications}
\end{figure}

\section{Conclusion}

In this paper, we introduced a generalized version of the Fast-KM method proposed in \cite{botkhoa2023}, incorporating two additional degrees of flexibility through the parameters $\theta \in (1, \alpha-1)$ and $\sigma > 0$. We provided a new convergence proof based on a semi-implicit discretization of a continuous-time model. Notably, our approach encompasses the case $\alpha = 2$, which corresponds to the Optimal Halpern Method, thereby unifying the convergence analysis of these two acceleration mechanisms. We demonstrated how these results can be applied to operator splitting methods potentially with preconditioners, offering critical convergence guarantees for the so-called shadow sequences. Furthermore, our numerical experiments showcased the benefits afforded by the added flexibility of the method. This paper opens up new intriguing questions, among which: \rev{i) what is the tight non-asymptotic rate for Algorithm \ref{alg:fast_km}?} \rev{ii) Does Algorithm \ref{alg:fast_km} converge when $\theta=\alpha-1$?} We leave this for future research.

\section*{Appendices}
\addcontentsline{toc}{section}{Appendices}
\renewcommand{\thesubsection}{\Alph{subsection}}

\subsection{Discrete Calculus Rules}\label{sec:discrete_calculus}

\rev{Let $a:=(a^k)_{k \in \N}$, $b:=(b^k)_{k \in \N}$, and $c:=(c^k)_{k \in \N}$, be three real sequences. The product of two sequences $a$ and $b$ is denoted by $ab :=(a^k b^k)_{k \in \N}$. As usual, if $a=b$, we denote $a^2 = ab$. Recall that $\Delta a_k :=  a^{k+1} - a^k$ for all $k \in \N$, Therefore, for all $k\geq 0$ it holds
\begin{equation}\label{eq:discrete_calculus_rules}
	\begin{aligned}
		&\Delta (ab)_k = \Delta a_k b^k +  a^{k+1}\Delta b_k\,,\\
		&\Delta (abc)_k = \Delta a_k b^k c^k + a^{k+1}\Delta b_k c^k + a^{k+1}b^{k+1}\Delta c_k\,,\\
		& \Delta a^2_k = 2(a^{k+1}-a^k)a^k +(\Delta a_k)^2 = 2(a^{k+1} - a^k)a^{k+1} - (\Delta a_k)^2 \,.
	\end{aligned}
\end{equation}
This formalism can be readily extended to sequences in a Hilbert space $H$, with products replaced by inner products. For instance, if $(z^k)_{k \in \N}$ is a sequence in $H$, and $\zeta^k := \frac{1}{2}\|z^k\|^2$ for all $k \geq 0$, can write
\begin{equation}
	\Delta \zeta_k =  \bra \Delta z_k, z^k \ket + \frac{1}{2} \| \Delta z_k \|^2 = \bra \Delta z_k, z^{k+1} \ket  - \frac{1}{2} \| \Delta z_k \|^2\,.
\end{equation}
Similar rules as those in \eqref{eq:discrete_calculus_rules} apply for products of sequences in $H$ and in $\R$, for instance if $(\varepsilon^k)_{k \in \N}$ is a real sequence, then for all $k \geq 0$
\begin{equation}
	\Delta (\varepsilon z)_k = \Delta \varepsilon_k z^k +  \varepsilon^{k+1}\Delta z_k = \Delta z_k \varepsilon^k +  z^{k+1}\Delta \varepsilon_k\,.
\end{equation}}%

\subsection{Missing Proofs}\label{sec:appendix_lemmas}
The following proofs were postponed from the main part of the paper.

\begin{proof}[Proof of Lemma \ref{lem:derivative_of_Lyapunov_second_order}]
 For the sake of notation, we {also abbreviate $E^\lambda_t:=E^\lambda(t)$, $\dot{x}_t :=\dot{x}(t)$, \rev{$x_t := x(t)$}, $v^\lambda_t:=v^\lambda(t)$, $\delta_t:=\delta(t)$, and $\xi_t:=\xi(t)$}. Additionally, instead of writing $\frac{d}{dt}$ we write $\partial_t$. Using Leibniz rule, we get for almost all $t \geq t_0$:
	\begin{equation}\label{eq:lyapunov_decrease_1}
		\begin{aligned}
			\partial_t E^\lambda_t &= \dot{\delta}_t   \bra  Q_t, x_t-x^* \ket  + \delta_t  \bra  \partial_t Q_t, x_t-x^* \ket + \delta_t  \bra  Q_t, \dot{x}_t \ket \\
			& \quad + \frac{\dot{\xi_t}}{2}\|Q_t\|^2 + \xi_t  \bra  \partial_t Q_t, Q_t \ket + c \bra  x_t-x^*, \dot{x}_t  \ket +  \bra  v_t^{\lambda}, \dot{v}_t^\lambda  \ket \,.
		\end{aligned}
	\end{equation}
Let us compute the last inner product $ \bra  v^\lambda_t, \dot v^\lambda_t \ket $. For $\dot v_t^\lambda$ we have, \rev{using constitutive equation \eqref{eq:second_order_dynamic_general},} that for almost all $t \geq t_0$:
	\begin{equation*}
		\begin{aligned}
			\dot{v}^\lambda_t &=  (\lambda +1)\dot{x}_t + \rev{(1-\eta)} \beta_tQ_t + t\left(-\frac{\alpha}{t}\dot{x}_t - \beta_t \partial_t Q_t - b_t Q_t + \rev{(1-\eta)} \dot{\beta}_t Q_t + \rev{(1-\eta)} \beta_t \partial_t Q_t\right)\\
			& = (\lambda + 1-\alpha)\dot{x}_t + \rev{(1-\eta)} \beta_t Q_t - t\rev{\eta} \beta_t\partial_t Q_t + t(\rev{(1-\eta)}\dot{\beta}_t - b_t)Q_t\\
			& = -(\alpha-1-\lambda)\dot{x}_t - t\left(b_t -\rev{(1-\eta)} \dot{\beta}_t -\rev{(1-\eta)} \frac{\beta_t}{t}\right)Q_t - t\beta_t\rev{\eta}\partial_t Q_t\,.
		\end{aligned}
	\end{equation*}
	Denoting by $w_t :=  b_t -\rev{(1-\eta)} \dot{\beta}_t -\rev{(1-\eta)} \frac{\beta_t}{t}$, the inner product read for almost all $t \geq t_0$ as
	\begin{equation}\label{eq:v_times_vdot}
			\begin{aligned}
			 \bra  v^\lambda_t, \dot{v}^\lambda_t \ket  = & -\lambda(\alpha - 1- \lambda) \bra  x_t-x^*, \dot{x}_t   \ket - (\alpha - 1- \lambda)t\| \dot{x}_t\|^2 - \rev{(1-\eta)} (\alpha - 1- \lambda)t\beta_t \bra  Q_t, \dot{x}_t   \ket \\
			& - \lambda tw_t  \bra  x_t-x^*,Q_t \ket  -  t^2w_t  \bra  \dot{x}_t ,Q_t \ket   -  t^2 \beta_t w_t \rev{(1-\eta)} \| Q_t\|^2 \\
			& \rev{-} \rev{\eta} t\lambda \beta_t  \bra  x_t-x^*,  \partial_t Q_t \ket - \rev{\eta}t^2\beta_t  \bra  \dot{x}_t, \partial_t Q_t \ket - t^2\beta_t^2(1-\eta) \eta  \bra  Q_t,  \partial_t Q_t \ket \,.
		\end{aligned}
	\end{equation}
	Plugging \eqref{eq:v_times_vdot} into \eqref{eq:lyapunov_decrease_1}, \rev{using the definition of $\delta_t$, $\xi_t$ and $c$ from \eqref{eq:def_parameters},} and canceling out the appropriate terms, we obtain for almost all $t \geq t_0$ that
	\begin{equation}\label{eq:lyapunov_analysis_step_0}
		\begin{aligned}
			&\partial_t E^\lambda_t = \left(\dot{\delta}_t- t \lambda w_t\right)   \bra  Q_t, x_t-x^* \ket  - \rev{\eta} t^2\beta_t  \bra  \dot{x}_t,\partial_t Q_t \ket \\
			& \quad +\left(\delta_t - \rev{(1-\eta)} (\alpha - 1- \lambda)t\beta_t -  t^2 w_t\right)  \bra  Q_t, \dot{x}_t \ket  + \left(\frac{\dot{\xi}_t}{2}-\rev{(1-\eta)} t^2\beta_tw_t\right)\|Q_t\|^2 - (\alpha - 1- \lambda)t\|\dot{x}_t\|^2\,.
		\end{aligned}
	\end{equation}
	Let us study the quadratic form in the second line of \eqref{eq:lyapunov_analysis_step_0} separately\footnote{\rev{The coefficient $b_t$ in \eqref{eq:def_b} is in fact the only parameter that (i) does not depend on $\lambda$ (artificial parameter introduced in the Lyapunov analysis), and (ii) can make this quadratic form negative semidefinite for all $\lambda\in (0, \alpha -1]$. To see this, consider $\lambda =\alpha-1$. Then, $\delta_t$ must be equal to $t^2w_t$, which yields \eqref{eq:def_b}.}}. Plugging in the definitions of $\delta_t$, $\xi_t$ in \eqref{eq:def_parameters} and of $b$ in \eqref{eq:def_b}, we get, first, $w_t  = (\alpha -1)\rev{\eta}\frac{\beta_t}{t}$, hence, 
	\begin{equation*}
		\begin{aligned}
		&\delta_t - \rev{(1-\eta)}(\alpha - 1- \lambda)t\beta_t -  t^2 w_t = -t \beta_t (\alpha - 1 -\lambda)\,, \\
		&\frac{\dot{\xi}_t}{2}-\rev{(1-\eta)}t^2\beta_tw_t =  \beta_t t^2(1-\eta)\eta \left(\dot{\beta}_t + \frac{(2-\alpha)}{t}\beta_t\right)\,,
		\end{aligned}
	\end{equation*}
	so the quadratic form simplifies to:
	\begin{equation*}
		\begin{aligned}
			&\left(\delta_t - \rev{(1-\eta)}(\alpha - 1- \lambda)t\beta_t - t^2 w_t\right)  \bra  Q_t, \dot{x}_t \ket  + \left(\frac{\dot{\xi}_t}{2}- \rev{(1-\eta)} t^2\beta_tw_t\right)\|Q_t\|^2 - (\alpha - 1- \lambda)t\|\dot{x}_t\|^2\\
			= & - t \beta_t (\alpha-1-\lambda)  \bra  Q_t, \dot{x}_t \ket  + \beta_t t^2 (1-\eta)\eta \left(\dot{\beta}_t + \frac{(2-\alpha)}{t}\beta_t\right)\|Q_t\|^2 -  (\alpha-1-\lambda)t\|\dot{x}_t\|^2\\
			= & -\frac{t}{2}(\alpha-1-\lambda) \|\dot{x}_t + \beta_t Q_t\|^2-  \frac{t}{2}(\alpha-1-\lambda)\|\dot{x}_t\|^2\\
			&+ \beta_t t^2 \left( (1-\eta)\eta  \left(\dot{\beta}_t +\frac{\beta_t}{t}(2-\alpha) \right) + \frac{1}{2}(\alpha-1-\lambda)\frac{\beta_t}{t}\right) \|Q_t\|^2\,.
		\end{aligned}
	\end{equation*}
	Plugging this and the definition of $\xi_t$ and of $w_t$ into \eqref{eq:lyapunov_analysis_step_0}, we get the claim.
\end{proof}

\begin{proof}[Proof of Lemma \ref{lem:nonexp_derivative_of_Lyapunov_second_order_discrete}]
Let \rev{$k \geq 1$}.	Apply the discrete Leibniz rule (see Appendix~\ref{sec:discrete_calculus}) to get:
	\begin{equation}\label{eq:disclyapunov_decrease_1}
		\begin{aligned}
			\Delta E_k^\lambda & = \Delta \delta_k  \bra  Q^k, z^k - z^* \ket  + \delta_{k+1} \bra  \Delta Q_k, z^k - z^* \ket + \delta_{k+1} \bra  Q^{k+1}, \Delta z_k \ket +\Delta \left(\frac{\delta}{2}\|Q\|^2\right)_{{k}}\\
			& \quad +\frac{1}{2}\Delta \xi_k \| Q^k\|^2 + \xi_{k+1}  \bra  \Delta Q_k, Q^{k+1} \ket - \frac{1}{2}\xi_{k+1}\|\Delta Q_{k}\|^2\\
			& \quad + c \bra  \Delta z_{k-1}, z^k - z^* \ket - \frac{c}{2}\|\Delta z_{k-1}\|^2\\
			& \quad + \bra  \Delta v_k, v^{k+1} \ket  - \frac{1}{2}\|\Delta v_k\|^2\,.
		\end{aligned}
	\end{equation}
	Following the continuous-time approach, we first write $\Delta v_k$ only in terms of $z^{k-1}-z^*$, $Q^k$ and $\Delta Q_k$, then compute $ \bra \Delta v_k, v^{k+1} \ket $ and finally plug everything into $\Delta E_k^\lambda$. This will give the necessary cancellations. Using that $\Delta t_k = 1$ and the update of Algorithm~\ref{alg:fast_km} (see \eqref{eq:discrete_second_order}), we get
	\begin{equation}
		\begin{aligned}
			\Delta v_k &= \lambda \Delta z_{k-1} + \Delta t_{k-1}\left( \Delta z_{k-1}  + \rev{(1-\eta)}  Q^k\right) + t_k\left( \Delta^2 z_{k-1}  +  \rev{(1-\eta)} \Delta Q_k\right)\\
			& = \lambda \Delta z_{k-1} +  \left( \Delta z_{k-1}  + \rev{(1-\eta)}  Q^k\right) + t_k\left(-\frac{\alpha}{t_k} \Delta z_{k-1} -  \Delta Q_k - \frac{\rev{\theta} }{t_k}Q^k +  \rev{(1-\eta)} \Delta Q_k\right)\\
			& = -(\alpha - 1-\lambda) \Delta z_{k-1} - (\rev{\theta}-\rev{(1-\eta)})Q^k - t_k  \rev{\eta}\Delta Q_k\\
			&= -(\alpha - 1-\lambda) \Delta z_{k-1} - \rev{\eta}(\alpha-1)Q^k - t_k  \rev{\eta} \Delta Q_k\,.
		\end{aligned} 
	\end{equation} 
	Let us now evaluate $ \bra  \Delta v_k, v^{k+1} \ket $, we get:
	\begin{equation}\label{eq:discv_times_vdot}
		\begin{aligned}
			& \bra \Delta v_k, v^{k+1} \ket \\
			 = & -\lambda (\alpha - 1- \lambda) \bra  \Delta z_{k-1}, z^{k} - z^* \ket -  t_k  (\alpha - 1- \lambda) \bra  \Delta z_{k-1}, \Delta z_{k} \ket  - (\alpha - 1- \lambda)t_k  \rev{(1-\eta)}  \bra  \Delta z_{k-1}, Q^{k+1} \ket \\
			& - \lambda \rev{\eta} (\alpha - 1) \bra  Q^k, z^k - z^* \ket - t_k \rev{\eta}(\alpha - 1) \bra  Q^k, \Delta z_k \ket    -  t_k \eta (1-\eta)(\alpha - 1) \bra  Q^k, Q^{k+1} \ket \\
			&  - \lambda t_k \rev{\eta} \bra  \Delta Q_k, z^k - z^* \ket -  t_k^2 \rev{\eta} \bra  \Delta Q_k, \Delta z_k \ket   - \eta  (1-\eta) t_k^2  \bra  \Delta Q_k, Q^{k+1} \ket \,.
		\end{aligned}
	\end{equation}
Let us now inspect $\|\Delta v_k\|^2$: 
\begin{equation}\label{eq:nonexp_term_in_delta_v}
    \begin{aligned}
        -\frac{1}{2}\|\Delta v_k\|^2 & = - \frac{1}{2}\rev{\eta}^2(\alpha-1)^2 \|Q^k\|^2 - \frac{t_k^2}{2}\rev{\eta}^2 \|\Delta Q_k\|^2 - \frac{1}{2}(\alpha-1-\lambda)^2 \|\Delta z_{k-1}\|^2\\
        & \quad  - (\alpha-1) \rev{\eta}^2 t_k  \bra  Q^k, \Delta Q_k \ket  - \rev{\eta} (\alpha-1-\lambda)(\alpha-1) \bra  Q^k, \Delta z_{k-1} \ket \\
        & \quad - \rev{\eta}t_k(\alpha-1-\lambda) \bra  \Delta z_{k-1}, \Delta Q_k \ket \,.
    \end{aligned}
\end{equation}
Plugging \eqref{eq:discv_times_vdot} and \eqref{eq:nonexp_term_in_delta_v} into \eqref{eq:disclyapunov_decrease_1} and rearranging suitably we get
\begin{align}
		\label{eq:nonexp_negative_terms}\Delta E_k^\lambda & = - \lambda \rev{\eta} \left(\alpha - 2\right)   \bra  Q^k, z^k - z^* \ket  -  t_k^2 \rev{\eta} \bra  \Delta Q_k, \Delta z_k \ket  \\
        & \nonumber \quad -\rev{\eta} t_k(\alpha-1-\lambda) \bra  \Delta z_{k-1}, \Delta Q_k \ket   \\
		& \label{eq:nonexp_term_in_Q} \quad +\frac{1}{2}\eta (1-\eta)  \Delta t_{k-1}^2 \| Q^k\|^2  -  t_k \eta (1-\eta)(\alpha - 1) \bra  Q^k, Q^{k+1} \ket \\
            & \nonumber \quad - \frac{\rev{\eta}^2}{2}(\alpha-1)^2 \|Q^k\|^2 - (\alpha-1)\rev{\eta}^2 t_k  \bra  Q^k, \Delta Q_k \ket  \\
		& \label{eq:nonexp_term_in_Qx} \quad  + \lambda  \rev{\eta} t_k \bra  Q^{k+1}, \Delta z_k \ket  - (\alpha - 1- \lambda)t_k  \rev{(1-\eta)} \bra  \Delta z_{k-1}, Q^{k+1} \ket  - t_k  \rev{\eta}(\alpha - 1) \bra  Q^k, \Delta z_k \ket  \\
        & \nonumber \quad - \rev{\eta} (\alpha-1-\lambda)(\alpha-1) \bra  Q^k, \Delta z_{k-1}  \ket  \\
		& \label{eq:nonexp_term_in_x} \quad -  t_k  (\alpha - 1- \lambda) \bra  \Delta z_{k-1}, \Delta z_{k} \ket \\
            & \nonumber \quad -\frac{1}{2}(\alpha-1-\lambda)^2 \|\Delta z_{k-1}\|^2 \\
		&  \label{eq:nonexp_aux_term} \quad - \frac{1}{2}\rev{\eta} t_{k}^2 \|\Delta Q_{k}\|^2 - \frac{\lambda}{2}(\alpha-1-\lambda)\|\Delta z_{k-1}\|^2 + \Delta \left(\frac{\delta}{2}\|Q\|^2\right)_{{k}} \,,
\end{align}
        where in the last line we have used $\eta(1-\eta) + \rev{\eta}^2 = \rev{\eta}$. To deal with the various nonnegative off-sets in the inner products \eqref{eq:nonexp_term_in_Q}, \eqref{eq:nonexp_term_in_Qx} and \eqref{eq:nonexp_term_in_x}, we make a discrete version of the continuous-time quadratic form appear and control the remainder with Young's inequality. This gives
	\begin{equation*}
		\begin{aligned}
			\eqref{eq:nonexp_term_in_Q}:=&\frac{1}{2}\eta (1-\eta) \Delta t_{k-1}^2  \| Q^k\|^2  -  t_k \eta (1-\eta)(\alpha - 1) \bra  Q^k, Q^{k+1} \ket \\
             &- \frac{\rev{\eta}^2}{2}(\alpha-1)^2 \|Q^k\|^2 - (\alpha-1)\rev{\eta}^2 t_k  \bra  Q^k, \Delta Q_k \ket \\
             =&  \rev{\eta} \left(\frac{\rev{(1-\eta)}}{2}\Delta t_{k-1}^2 - \rev{(1-\eta)} t_k (\alpha - 1)-\frac{\rev{\eta}}{2}(\alpha-1)^2 \right)\|Q^{k}\|^2  - t_k (\alpha-1)\rev{\eta}  \bra  Q^k, \Delta Q_k \ket    \\
               =& \underbrace{ -\left[\eta (1-\eta) \left((\alpha-2) t_k + \frac{1}{2}\right) + \frac{1}{2}\rev{\eta}^2 (\alpha - 1)^2 \right]\|Q^{k}\|^2}_{(\labterm{lb:lem1_Q1}{Q.1})}  - t_k (\alpha-1)\rev{\eta}  \bra  Q^k, \Delta Q_k \ket  \,,
		\end{aligned}
	\end{equation*}
        where in the coefficient that multiplies $\|Q^{k}\|^2$ we have used, by the definition of $t_k$, 
        \begin{equation}\label{eq:nonexp_Deltatk}
		\begin{aligned}
			\frac{1}{2}&\Delta t_{k-1}^2 =\frac{1}{2}\left((k - 1 +\sigma)^2-(k-2+\sigma)^2\right)=  t_k -\frac{1}{2}\,,
		\end{aligned} 
	\end{equation}
	hence $\frac{1}{2}\Delta t_{k-1}^2 - t_k (\alpha - 1) = -\frac{1}{2}- t_k (\alpha - 2)$. The first term of our quadratic form involving $\|Q^{k}\|^2$ appears. We now force $ \bra  Q^{k}, \Delta z_{k-1} \ket $ to appear. This comes from the fact that
	\begin{equation*}
		\begin{aligned}
			\eqref{eq:nonexp_term_in_Qx}&:= \lambda  \rev{\eta} t_k \bra  Q^{k+1}, \Delta z_k \ket  - (\alpha - 1- \lambda)t_k \rev{(1-\eta)}  \bra  \Delta z_{k-1}, Q^{k+1} \ket  - t_k \rev{\eta}(\alpha - 1) \bra  Q^k, \Delta z_k \ket  \\
        & \quad - \rev{\eta} (\alpha-1-\lambda)(\alpha-1) \bra  Q^k, \Delta z_{k-1}  \ket  \\
			& = \lambda \rev{\eta} t_k  \bra  Q^k, \Delta z_{k-1} \ket  - (\alpha-1-\lambda) t_k \rev{(1-\eta)} \bra  Q^k, \Delta z_{k-1} \ket  - t_k\rev{\eta}(\alpha-1) \bra  Q^k, \Delta z_{k-1} \ket \\
             & \quad + \lambda \rev{\eta} t_k  \bra  Q^{k+1}, \Delta^2 z_{k-1} \ket  - (\alpha-1-\lambda) t_k \rev{(1-\eta)} \bra  \Delta Q_{k}, \Delta z_{k-1} \ket  - t_k \rev{\eta} (\alpha-1)  \bra  Q^k, \Delta^2 z_{k-1} \ket   \\
             & \quad + \lambda \rev{\eta}t_k  \bra  \Delta Q_k, \Delta z_{k-1} \ket  - \rev{\eta}(\alpha-1-\lambda)(\alpha-1) \bra  Q^k, \Delta z_{k-1} \ket  \\
			& = \underbrace{-(\alpha - 1 -\lambda)t_k   \bra  Q^{k}, \Delta z_{k-1} \ket }_{(\labterm{lb:lem1_Q2}{Q.2})} - t_k \rev{\eta} (\alpha-1-\lambda)  \bra  Q^k, \Delta^2 z_{k-1} \ket\\
            & \quad + \lambda \rev{\eta} t_k  \bra  \Delta Q_k, \Delta^2 z_{k-1} \ket  - (\alpha-1-\lambda) t_k \rev{(1-\eta)} \bra  \Delta Q_{k}, \Delta z_{k-1} \ket   \\
             & \quad + \lambda \rev{\eta} t_k  \bra  \Delta Q_k, \Delta z_{k-1} \ket  - \rev{\eta} (\alpha-1-\lambda)(\alpha-1) \bra  Q^k, \Delta z_{k-1} \ket  \\
             & = \eqref{lb:lem1_Q2}\\
        & \quad \underbrace{- t_k \rev{\eta} (\alpha-1-\lambda)  \bra  Q^k, \Delta^2 z_{k-1} \ket }_{(\labterm{lb:lem1_II}{II})}+ \lambda \rev{\eta} t_k  \bra  \Delta Q_k, \Delta z_{k} \ket  - (\alpha-1-\lambda) t_k \rev{(1-\eta)}  \bra  \Delta Q_{k}, \Delta z_{k-1} \ket   \\
             & \quad  \underbrace{- \rev{\eta} (\alpha-1-\lambda)(\alpha-1) \bra  Q^k, \Delta z_{k-1} \ket }_{(\labterm{lb:lem1_N1}{N.1})}\,.
		\end{aligned}
	\end{equation*}
	We will handle \eqref{lb:lem1_II} separately later. To conclude, we make the term in $\|\Delta z_{k-1}\|^2$ appear. Starting with \eqref{eq:nonexp_term_in_x} we have:
	\begin{equation*}
		\begin{aligned}
			\eqref{eq:nonexp_term_in_x}&:=- t_k  (\alpha - 1- \lambda) \bra  \Delta z_{k-1}, \Delta z_{k} \ket  - \frac{1}{2}(\alpha-1-\lambda)^2 \|\Delta z_{k-1}\|^2\\
   & = \underbrace{\vphantom{\frac{1}{2}} -   t_k  (\alpha - 1- \lambda)\| \Delta z_{k-1}\|^2}_{(\labterm{lb:lem1_Q3}{Q.3})}\underbrace{\vphantom{\frac{1}{2}}- t_k  (\alpha - 1- \lambda) \bra  \Delta z_{k-1}, \Delta^2 z_{k-1} \ket }_{(\labterm{lb:lem1_III}{III})}\underbrace{- \frac{1}{2}(\alpha-1-\lambda)^2 \|\Delta z_{k-1}\|^2}_{(\labterm{lb:lem1_N2}{N.2})}\,. 
		\end{aligned}
	\end{equation*}
	Once again, we will deal with \eqref{lb:lem1_III} later. Gathering all the terms together, we get
		\begin{equation}\label{eq:DEk_2}
        \begin{aligned}
		 \Delta E_k^\lambda & = - \lambda \rev{\eta} \left(\alpha - 2\right)   \bra  Q^k, z^k - z^* \ket  -  t_k^2 \rev{\eta} \bra  \Delta Q_k, \Delta z_k \ket   \\
        &  \quad -\rev{\eta} t_k(\alpha-1-\lambda) \bra  \Delta z_{k-1}, \Delta Q_k \ket  \\
		&  \quad +\eqref{lb:lem1_Q1}  - t_k (\alpha-1)\rev{\eta} \bra  Q^k, \Delta Q_k \ket  \\
		&  \quad + \eqref{lb:lem1_Q2} + \eqref{lb:lem1_II} +  \lambda \rev{\eta} t_k  \bra  \Delta Q_k, \Delta z_k \ket  - (\alpha-1-\lambda) t_k \rev{(1-\eta)}  \bra  \Delta Q_{k}, \Delta z_{k-1} \ket   + \eqref{lb:lem1_N1}\\
		&   \quad + \eqref{lb:lem1_Q3} + \eqref{lb:lem1_III} + \eqref{lb:lem1_N2}\\
		&   \quad - \frac{1}{2} \rev{\eta} t_{k}^2 \|\Delta Q_{k}\|^2 - \frac{\lambda}{2}(\alpha-1-\lambda)\|\Delta z_{k-1}\|^2 + \Delta \left(\frac{\delta}{2}\|Q\|^2\right)_{{k}}\\
        & = - \lambda \rev{\eta} \left(\alpha - 2\right)   \bra  Q^k, z^k - z^* \ket  -  (t_k^2 - \lambda t_k) \rev{\eta} \bra  \Delta Q_k, \Delta z_k \ket   \\
        &  \quad - t_k(\alpha-1-\lambda) \bra  \Delta z_{k-1}, \Delta Q_k \ket  \\
		&  \quad + \eqref{lb:lem1_Q1}  - t_k (\alpha-1)\rev{\eta} \bra  Q^k, \Delta Q_k \ket  \\
		&  \quad + \eqref{lb:lem1_Q2} + \eqref{lb:lem1_II} + \eqref{lb:lem1_N1}\\
		&   \quad + \eqref{lb:lem1_Q3} + \eqref{lb:lem1_III} + \eqref{lb:lem1_N2} - \frac{1}{2}\rev{\eta} t_{k}^2 \|\Delta Q_{k}\|^2 - \frac{\lambda}{2}(\alpha-1-\lambda)\|\Delta z_{k-1}\|^2 + \Delta \left(\frac{\delta}{2}\|Q\|^2\right)_{{k}}\,,
        \end{aligned}
        \end{equation}   
         Let us now collect all the terms in \eqref{lb:lem1_Q2} and \eqref{lb:lem1_Q3} and argue analogously to the continuous-time setting to get
	\begin{equation}\label{eq:nonexp_quadratic_form_discrete}
		\begin{aligned}
			\eqref{lb:lem1_Q2} + \eqref{lb:lem1_Q3} & = -(\alpha - 1 -\lambda)t_k   \bra  Q^{k}, \Delta z_{k-1} \ket  -   t_k  (\alpha - 1- \lambda)\| \Delta z_{k-1}\|^2\\
			&=-(\alpha - 1- \lambda) t_k\left(  \bra   Q^{k},  \Delta z_{k-1}   \ket   +\left\| \Delta z_{k-1} \right\|^2 \right)\\
			&=-(\alpha - 1- \lambda){  t_k}\left(\frac{1}{2}\left\| Q^{k} + \Delta z_{k-1} \right\|^2+\frac{1}{2}\left\| \Delta z_{k-1} \right\|^2 -\frac{1}{2}\|Q^{k}\|^2 \right)\\
			& = (\alpha-1-\lambda)\frac{t_k}{2}\|Q^{k}\|^2 - (\alpha-1-\lambda)\frac{t_k}{2}\|\Delta z_{k-1}\|^2 -(\alpha - 1 - \lambda)\frac{t_k}{2}\left\| Q^{k} +  \Delta z_{k-1} \right\|^2\,.
		\end{aligned}
	\end{equation}
        Note that the positive term $(\alpha-1-\lambda)\frac{ t_k}{2}$ in the last display is not a concern when $\alpha > 2$ since we shall take $\lambda$ arbitrarily close to $\alpha - 1$ in our Lyapunov analysis in such a way that when combined with \eqref{lb:lem1_Q1}, the coefficient in front of $\|Q^k\|^2$ will be negative. The case $\alpha=2$ will be treated with $\lambda=\alpha-1$. We now deal with \eqref{lb:lem1_II} and \eqref{lb:lem1_III}, while the terms labeled with a capital ``$N$'' will be handled properly later. We have
        \begin{equation}
            \begin{aligned}
                \eqref{lb:lem1_II} & = -(\alpha-1-\lambda)t_k \rev{\eta} \bra  Q^{k}, \Delta^2 z_{k-1} \ket \\
                & = -(\alpha-1-\lambda)t_k \rev{\eta} \bra   Q^{k}, -\frac{\alpha}{t_k}\Delta z_{k-1} - \Delta Q_k - \frac{\rev{\theta}}{t_k} Q^k  \ket  \\
                & = (\alpha-1-\lambda)\rev{\eta} t_k  \bra  Q^k, \Delta Q_k  \ket  + \underbrace{(\alpha-1-\lambda)\left[\alpha\rev{\eta} \bra  Q^k, \Delta z_{k-1} \ket  + \rev{\eta}\rev{\theta} \|Q^k\|^2\right]}_{(\labterm{lb:lem1_N3}{N.3})}\,.
            \end{aligned}
        \end{equation}
        As far as \eqref{lb:lem1_III} is concerned, we have
        \begin{equation}
            \begin{aligned}
                \eqref{lb:lem1_III}&= - t_k  (\alpha - 1- \lambda) \bra  \Delta z_{k-1}, \Delta^2 z_{k-1} \ket \\
                & = - t_k  (\alpha - 1- \lambda)  \bra   \Delta z_{k-1}, -\frac{\alpha}{t_k}\Delta z_{k-1} - \Delta Q_k - \frac{\rev{\theta}}{t_k} Q^k   \ket  \\
                & =   t_k (\alpha - 1- \lambda)  \bra   \Delta z_{k-1},  \Delta Q_k   \ket    +  \underbrace{\alpha (\alpha - 1- \lambda)\|\Delta z_{k-1}\|^2  +  \rev{\theta}(\alpha - 1- \lambda)   \bra   \Delta z_{k-1}, Q^k   \ket  }_{(\labterm{lb:lem1_N4}{N.4})}\,.
            \end{aligned}
        \end{equation}
        Plugging now \eqref{lb:lem1_II} and \eqref{lb:lem1_III} into \eqref{eq:DEk_2} we get
        \begin{equation}\label{eq:delta_E_step_almost_final}
        \begin{aligned}
        \Delta E_k^\lambda & = - \lambda \rev{\eta} \left(\alpha - 2\right)   \bra  Q^k, z^k - z^* \ket  -  (t_k^2 - \lambda t_k)\rev{\eta} \bra  \Delta Q_k, \Delta z_k \ket \\
        &  \quad + \eqref{lb:lem1_Q1}  \underbrace{- t_k \lambda \rev{\eta} \bra  Q^k, \Delta Q_k \ket }_{(\labterm{lb:lem1_IV}{IV})}   \\
        &  \quad + \eqref{lb:lem1_Q2}  + \eqref{lb:lem1_N1}  + \eqref{lb:lem1_N3} \\
        &   \quad + \eqref{lb:lem1_Q3} + \eqref{lb:lem1_N2} + \eqref{lb:lem1_N4}   \\
        & \quad - \frac{1}{2}\rev{\eta} t_{k}^2 \|\Delta Q_{k}\|^2 \underbrace{- \frac{\lambda}{2}(\alpha-1-\lambda)\|\Delta z_{k-1}\|^2}_{(\labterm{lb:lem1_N5}{N.5})} + \Delta \left(\frac{\delta}{2}\|Q\|^2\right)_{{k}}  \,,
        \end{aligned}
        \end{equation}
        where the term $t_k (\alpha - 1- \lambda)  \bra   \Delta z_{k-1},  \Delta Q_k   \ket  $ canceled out with the corresponding one in \eqref{lb:lem1_III}, and we simplified the coefficient that multiplies $ \bra  Q^k, \Delta Q_k \ket $ using the contribution from \eqref{lb:lem1_II}. Let us also define for brevity: $(\labterm{lb:lem1_N}{N}):=\eqref{lb:lem1_N1} + \eqref{lb:lem1_N2} + \eqref{lb:lem1_N3} + \eqref{lb:lem1_N4} + \eqref{lb:lem1_N5}$ \rev{and inspect \eqref{lb:lem1_IV}: Using the polarization identity and recalling that  $\delta_k = t_{k-1}\lambda \rev{\eta} $, we get:
        \begin{equation}
        	\begin{aligned}
        		\eqref{lb:lem1_IV}&= - t_k \lambda \rev{\eta}  \bra  Q^k, \Delta Q_k \ket \\
        		& = - t_k \lambda \rev{\eta} \left(\frac{1}{2}\|Q^{k+1}\|^2 - \frac{1}{2}\|Q^k\|^2 - \frac{1}{2}\|\Delta Q_k\|^2\right)\\
        		& = - \frac{1}{2}t_k\lambda \rev{\eta}\|Q^{k+1}\|^2 + \frac{1}{2}t_{k-1}\lambda \rev{\eta}\|Q^{k}\|^2+ \frac{\lambda}{2}\rev{\eta}\|Q^k\|^2 + \frac{t_k}{2} \lambda\rev{\eta}\|\Delta Q_k\|^2\\
        		& = - \Delta \left(\frac{\delta}{2}\|Q\|^2\right)_{{k}} + \frac{\lambda}{2}\rev{\eta}\|Q^k\|^2 + \frac{t_k}{2} \lambda\rev{\eta}\|\Delta Q_k\|^2 \,.
        	\end{aligned}
        \end{equation}
    Inserting \eqref{lb:lem1_IV}, \rev{\eqref{lb:lem1_Q1}, \eqref{lb:lem1_Q2}, and \eqref{lb:lem1_Q3} into \eqref{eq:delta_E_step_almost_final}}, simplifying and grouping terms yields
    \begin{equation}\label{eq:nonexp_descent_E_4_bis}
    	\begin{aligned}
    		\Delta E_k^\lambda & = -(t_k^2 -\lambda t_k) \eta \left( \bra \Delta Q_k, \Delta z_k\ket + \frac{1}{2}\|\Delta Q_k\|^2 \right)  - \lambda \rev{\eta}\left(\alpha - 2\right) \left(\bra  Q^k, z^k - z^* \ket + \frac{1}{2} \| Q^k\|^2\right) \\				
    		&\quad +\left[\frac{\lambda}{2} \rev{\eta}\left(\alpha - 2\right) + \frac{\lambda}{2}\rev{\eta} \right]\| Q^k\|^2  - \left[\eta (1-\eta) \left((\alpha-2) t_k + \frac{1}{2}\right) + \frac{1}{2}\rev{\eta}^2 (\alpha - 1)^2 \right]\|Q^{k}\|^2 + \eqref{lb:lem1_N} \\
    		& \quad + (\alpha-1-\lambda)\frac{t_k}{2}\|Q^{k}\|^2 - (\alpha-1-\lambda)\frac{t_k}{2}\|\Delta z_{k-1}\|^2 -(\alpha - 1 - \lambda)\frac{t_k}{2}\left\| Q^{k} +  \Delta z_{k-1} \right\|^2\,.
    	\end{aligned}
    \end{equation}
    
    }

        We now make use of \eqref{eq:non_monotonicity_of_Q} to infer that
        \begin{equation}
        	 \bra  \Delta Q_k, \Delta z_k \ket \geq -\frac{1}{2}\|\Delta Q_k\|^2 \,.
        \end{equation}
        This allows us to get for $k\geq 1$ and such that $t_k \geq \rev{\lambda}$
        \begin{equation}\label{eq:nonexp_descent_E_4}
         \begin{aligned}
              \Delta E_k^\lambda & \leq - \frac{\rev{\eta}}{2}\Big(\rev{(1-\eta)} + \rev{\eta}(\alpha-1)^2 - \lambda(\alpha-1)\Big) \|Q^k\|^2 
            -\left(\eta(1-\eta)(\alpha-2) - \frac{\alpha-1-\lambda}{2}\right) t_k \|Q^k\|^2  \\
            & \quad  - (\alpha-1-\lambda)\frac{t_k}{2}\|\Delta z_{k-1}\|^2 -(\alpha - 1 - \lambda)\frac{t_k}{2}\left\| Q^{k} +  \Delta z_{k-1} \right\|^2 + \eqref{lb:lem1_N} \,.
         \end{aligned}
        \end{equation}
        Now, invoking the Young's inequality on all the inner products appearing in \eqref{lb:lem1_N}, we obtain
        \begin{equation}\label{eq:Nterm}
            \begin{aligned}
                \eqref{lb:lem1_N} &= - \rev{\eta} (\alpha-1-\lambda)(\alpha-1) \bra  Q^k, \Delta z_{k-1} \ket - \frac{1}{2}(\alpha-1-\lambda)^2 \|\Delta z_{k-1}\|^2\\
                & \quad +(\alpha-1-\lambda)\left[\alpha\rev{\eta} \bra  Q^k, \Delta z_{k-1} \ket  + \rev{\eta}\rev{\theta} \|Q^k\|^2\right]\\
                & \quad + \alpha (\alpha - 1- \lambda)\|\Delta z_{k-1}\|^2  +  \rev{\theta}(\alpha - 1- \lambda)   \bra   \Delta z_{k-1}, Q^k   \ket  - \frac{\lambda}{2}(\alpha-1-\lambda)\|\Delta z_{k-1}\|^2\\
                &= (\alpha-1-\lambda)\left(\rev{\eta}\rev{\theta} \|Q^k\|^2 + (\rev{\eta}+\rev{\theta})\bra  Q^k, \Delta z_{k-1} \ket + \frac{\alpha+1}{2}\|\Delta z_{k-1}\|^2\right)\\
                & \leq(\alpha - 1- \lambda)\left(\rev{\eta}\rev{\theta} + \frac{\rev{\eta}+\rev{\theta}}{2}\right) \| Q^k \|^2 + (\alpha - 1- \lambda)\frac{\alpha+2+\rev{\eta}(\alpha-1)}{2} \|\Delta z_{k-1}\|^2 \, .
            \end{aligned}
        \end{equation}
        Inserting \eqref{eq:Nterm} into \eqref{eq:nonexp_descent_E_4} and rearranging we get the claim.
\end{proof}

\printbibliography

\end{document}